\newcommand{\blank}{\underline{\hspace{0.2cm}}}
\def\bign#1{\mathclose{\hbox{$\left#1\vbox to8.5\p@{}\right.\n@space$}}\mathopen{}}
\theoremstyle{definition}
\newtheorem{theorem}{Theorem}[]
\newtheorem*{theorem*}{Theorem}
\newtheorem{lemma}[theorem]{Lemma}
\newtheorem*{lemma*}{Lemma}
\newtheorem{corollary}[theorem]{Corollary}
\newtheorem*{corollary*}{Corollary}
\newtheorem{definition}[theorem]{Definition}
\newtheorem*{definition*}{Definition}
\newtheorem*{remark*}{Remark}
\newtheorem{proposition}[theorem]{Proposition}
\newtheorem*{proposition*}{Proposition}
\newtheorem{example}[theorem]{Example}
\newtheorem*{example*}{Example}
\newcommand\restr[2]{\ensuremath{\left.#1\right|_{#2}}}
\DeclareMathOperator*\medoplus{\mathchoice
	{\textstyle\bigoplus}
	{\textstyle\bigoplus}
	{\scriptstyle\bigoplus}
	{\scriptscriptstyle\bigoplus}
}
\title{The $\operatorname{Ext}$-algebra of standard modules over dual extension algebras}
\author{Markus Thuresson}
\begin{document}
\maketitle
\thispagestyle{empty}
\begin{abstract}
We exhibit an isomorphism of associative algebras between the $\operatorname{Ext}$-algebra $\operatorname{Ext}_\Lambda^\ast(\Delta,\Delta)$ of standard modules over the dual extension algebra $\Lambda$ of two directed algebras $B$ and $A$ and the dual extension algebra of the $\operatorname{Ext}$-algebra $\operatorname{Ext}_B^\ast(\mathbb{L},\mathbb{L})$ with $A$. There are natural $A_\infty$-structures on these $\operatorname{Ext}$-algebras, and, under certain technical assumptions on $B$, we describe that on $\operatorname{Ext}_\Lambda^\ast(\Delta,\Delta)$ completely in terms of that on $\operatorname{Ext}_B^\ast(\mathbb{L},\mathbb{L})$. As an example, we compute these $A_\infty$-structures explicitly in the case where $B=A=K\mathbb{A}_n /(\operatorname{rad}K\mathbb{A}_n)^\ell$.
\end{abstract}
\newpage
\thispagestyle{empty}

\noindent
\section{Introduction}
In \cite{CPS}, Cline, Parshall and Scott introduced the notion of highest weight category as a categorical axiomatization of structures arising in the representation theory of complex semisimple Lie algebras. Moreover, they showed that those finite-dimensional algebras whose module categories are equivalent to a highest weight category are exactly the quasi-hereditary algebras. Typical examples of quasi-hereditary algebras are hereditary algebras, algebras of global dimension two, Schur algebras and blocks of BGG category $\mathcal{O}$. The defining feature of quasi-hereditary algebras is the existence of a particular collection of modules, called the standard modules. These are certain quotients of the indecomposable projective modules and serve as the main protagonists of the representation theory of quasi-hereditary algebras. Closely related is the associated category $\mathcal{F}(\Delta)$, the full subcategory of the module category consisting of those modules which admit a filtration by standard modules. 

An important step towards understanding $\mathcal{F}(\Delta)$ for a general quasi-hereditary algebra was taken by Koenig, K{\"u}lshammer and Ovsienko in \cite{kko}. Using powerful techniques involving $A_\infty$-algebras and boxes, they showed that for any quasi-hereditary algebra, there is a directed box such that the category of representations of this box is equivalent to $\mathcal{F}(\Delta)$, allowing the study of $\mathcal{F}(\Delta)$ through the study of boxes and their representations (a box is a bimodule over an algebra together with a comultiplication and counit obeying the natural coassociativity and counitality axioms \cite{Rojter}). The representation theory of boxes is important in its own right, notably playing a sizeable role in the proof of Drozd's theorem on tame and wild dichotomy \cite{Drozd}.

The result by Koenig, K{\"u}lshammer and Ovsienko leads us to the problem of, given a quasi-hereditary algebra, determining the corresponding box describing $\mathcal{F}(\Delta)$. This may be done by taking the so-called  ``$A_\infty$-Koszul dual'' of the algebra of extensions between standard modules, $\operatorname{Ext}^\ast(\Delta,\Delta)$. The first (and arguably most arduous) step in this process is to determine the $A_\infty$-structure on $\operatorname{Ext}^\ast(\Delta,\Delta)$. Loosely speaking, such a structure is meant to capture the idea of an algebra whose multiplication is not strictly associative, but only associative up to a system of higher homotopies \cite{Stasheff1, Stasheff2}.

Unfortunately, describing the $A_\infty$-structure on $\operatorname{Ext}^\ast(\Delta,\Delta)$ may be extremely complicated. Examples of algebras where this $A_\infty$-structure has been explicitly computed are few and far between, however, one family of examples may be found in \cite{KlamtStroppel}. The main goal of the present paper is to study a somewhat large class of algebras which are computationally well-behaved enough to permit an explicit description.

In \cite{Xi}, Xi introduced the dual extension algebra as part of his study of BGG algebras, that is, quasi-hereditary algebras adimitting a simple-preserving duality on their module category. Originally introduced as taking only one input ($\mathcal{A}(B,A^{\operatorname{op}})$ with $B=A$), these were soon generalized and further studied in \cite{DengXi, Xigldim, DengXiringel, XiTilting, Shun, LiWei, Lixu,}.  Importantly, in \cite{Lixu}, Li and Xu connected the Koszulity of the dual extension algebra $\mathcal{A}(B,A^{\operatorname{op}})$ to that of $B$ and $A$. The dual extension algebras, as it turns out, behave well enough with respect to computations to allow the explicit description of the $A_\infty$-structure mentioned above.

The following is a description of the main results of the present article. Let $B$ and $A$ be directed algebras and let $\Lambda=\mathcal{A}(B,A^{\operatorname{op}})$ denote their dual extension algebra. In this case, $\Lambda$ is quasi-hereditary by a result of Xi. Let $\Delta$ denote the direct sum of standard modules over $\Lambda$ (one from each isomorphism class) and let $\mathbb{L}$ denote the direct sum of simple modules over $B$ (one from each isomorphism class).

\begin{enumerate}[label=(\Alph*)]
	\item There is an isomorphism of graded algebras between the $\operatorname{Ext}$-algebra of standard modules over the dual extension algebra $\Lambda$ and the dual extension algebra of the $\operatorname{Ext}$-algebra of simple modules over $B$ with $A$, that is, 
	\begin{align*}
	\operatorname{Ext}^\ast_{\Lambda}(\Delta,\Delta)\cong \mathcal{A}(\operatorname{Ext}_B^\ast(\mathbb{L},\mathbb{L}), A).
	\end{align*}
	\item The special case of Merkulov's construction (\cite{Merkulov}) given in \cite{LuPalmieriWuZhang} provides $A_\infty$-structures on $\operatorname{Ext}_B^\ast(\mathbb{L},\mathbb{L})$ and on $\operatorname{Ext}_{\Lambda}^\ast(\Delta,\Delta)$. Denote the higher multiplications by $\{m_n^B\}_{n=1}^\infty$ and $\{m_n^\Lambda\}_{n=1}^\infty$, respectively. Merkulov's construction involves several choices and hence the higher multiplications produced are not canonical. We show that, under certain technical assumptions on $B$, the construction may be performed on $\operatorname{Ext}_B^\ast(\mathbb{L},\mathbb{L})$ and $\operatorname{Ext}_{\Lambda}^\ast(\Delta,\Delta)$ in such a way that the higher multiplications $\{m_n^\Lambda \}_{n=1}^\infty$ are given in terms of the data produced by performing the construction on $\operatorname{Ext}_B^\ast(\mathbb{L},\mathbb{L})$. 
	
	More precisely, we prove the formulae below. For all details on the notation, and the assumptions needed, we refer to Section~6. The maps $p^B, h^B$ and $\lambda_{n-1}^B$ are obtained from performing Merkulov's construction on $\operatorname{Ext}_B^\ast(\mathbb{L},\mathbb{L})$, while $F$ denotes the induction functor
	$$\Lambda\otimes_B \blank:B\operatorname{-mod}\to \Lambda\operatorname{-mod}.$$ Let $n\geq 2$. For any $f^\prime_1,\dots, f_n^\prime \in \operatorname{Hom}_\Lambda(\Delta,\Delta)$ and $\varepsilon_1,\dots, \varepsilon_n\in \operatorname{Ext}_B^\ast(\mathbb{L},\mathbb{L})$, such that $\deg \varepsilon_i \geq 1$, for all $1\leq i \leq n$, we have the following.
	\begin{enumerate}[label=$(\roman*)$]
		\item If there is $1\leq i\leq n-1$ such that $f_i^\prime\in \operatorname{rad}(\Delta_\Lambda(\texttt{j}),\Delta_\Lambda(\texttt{k}))$, we have $m_n^\Lambda(f_n^\prime \varepsilon_n, \dots, f_1^\prime \varepsilon_1)=0$.
		\item $
		m_n^\Lambda(f_n^\prime \varepsilon_n, \dots, \varepsilon_2, \varepsilon_1)=(-1)^{n+1} f_n ^\prime F \big(p^B \varepsilon_n h^B (\lambda^B_{n-1}(\varepsilon_{n-1},\dots, \varepsilon_1))\big).$
	\end{enumerate}
	\item Lastly, we provide formulae for the $A_\infty$-multiplications on $\operatorname{Ext}_B^\ast(\mathbb{L},\mathbb{L})$ obtained from Merkulov's construction in the case  $B=\faktor{K\mathbb{A}_n}{(\operatorname{rad}K\mathbb{A}_n)^\ell}$ and, using (B), explicitly describe the corresponding $A_\infty$-multiplications that this gives on $\operatorname{Ext}^\ast_{\mathcal{A}(B,B^{\operatorname{op}})}(\Delta,\Delta)$.
\end{enumerate}
This article is organized in the following way. In Section 2, we fix some notation and recall the necessary definitions, as well as Xi's initial result on the quasi-hereditary structure of the dual extension algebra. In Section 3, we compute the space of extensions between standard modules over the dual extension algebra $\mathcal{A}(B,A^{\operatorname{op}})$. Section 4 is devoted to the description of the algebra structure on the $\operatorname{Ext}$-algebra of standard modules over $\mathcal{A}(B,A^{\operatorname{op}})$ and contains the proof of (A).

In Section 5, we provide an analogue of the theorem by Li and Xu in \cite{Lixu}, which states that $\mathcal{A}(B,A)$ is Koszul if and only if both $B$ and $A$ are Koszul, in terms of linear resolutions of standard modules. In Section 6, we investigate the $A_\infty$-structure on $\operatorname{Ext}_{\mathcal{A}(B,A^{\operatorname{op}})}^\ast(\Delta,\Delta)$ provided by Merkulov's construction and precisely state and prove (B).

Finally, in Sections 7 and 8, we give an example by performing this construction for $$B=A=\faktor{K\mathbb{A}_n}{(\operatorname{rad}K\mathbb{A}_n)^\ell}$$ and using the results of Section 6 to give an application to the dual extension algebra $\mathcal{A}(B,B^{\operatorname{op}})$.

\newpage
\tableofcontents
\newpage
\section{Notation and setup}
Throughout, we let $K$ be an algebraically closed field. Letting $Q=(Q_0,Q_1)$ be a quiver with vertex set $\{1,\dots, n\}$, we denote the path algebra of $Q$ by $KQ$. Throughout, all quivers will have vertex set $\{1, \dots, n\}$. This set inherits a natural ordering relation, ``$<$'', from the natural numbers, which we fix from here on. For an arrow $\alpha \in Q_1$, denote by $s(\alpha)$ and $t(\alpha)$ the starting and terminal vertex of $\alpha$, respectively. For vertices and arrows $\xymatrixcolsep{0.3cm}\xymatrix{i\ar[r]^-{\alpha} & j \ar[r]^-{\beta} & k}$ we write the composition ``$\beta$ after $\alpha$'' as $\beta\alpha$. For a path $p=\alpha_n \dots \alpha_1$ in $Q$, we write $s(p)=s(\alpha_1)$ and $t(p)=\alpha_n$. Given an admissible ideal $I\subset KQ$, we may form the corresponding quotient algebra $B=\faktor{KQ}{I}$. Let $L_B(i)$ and $P_B(i)$ denote the simple and indecomposable projective $B$-modules, respectively.

If the quiver $Q$ is finite and acyclic, we may assume that for any arrow $\xymatrixcolsep{0.3cm}\xymatrix{i\ar[r]^-{\alpha}& j}$, we have $i<j$ (with respect to the natural order on $\{1,\dots, n\}$). We say that $B=\faktor{KQ}{I}$, where $I$ is admissible, is \emph{directed} if $Q$ is finite, acyclic and its vertices are numbered as above. Of course, if $B$ instead is an algebra whose quiver only has arrows in decreasing direction, we may reverse the direction of arrows to obtain a directed algebra in the above sense.

\begin{definition}\label{def:dual extension algebra}\cite{Xi, XiTilting, Lixu}
	Let $B\cong \faktor{KQ}{I}$ and $A\cong\faktor{KQ^\prime}{I^\prime}$ be algebras and let the quivers $Q$ and $Q^\prime$ be such that $Q_0=Q_0^\prime$. We define the \emph{dual extension algebra} $\Lambda=\mathcal{A}(B,A)$ of $B$ and $A$ as $\mathcal{A}(B,A)=\faktor{KE}{J}$ where $E=(E_0,E_1)$ and $J$ are as follows.
	\begin{enumerate}[label=$(\roman*)$]
		\item $E_0=Q_0=Q_0^\prime$.
		\item $E_1=Q_1\cup Q_1^\prime$.
		\item If $I=\langle \rho_i \rangle$, $I^\prime=\langle \rho_j^\prime\rangle$, then
		$$J=\langle \rho_i, \rho_j^\prime, \alpha\beta^\prime \ |\forall \alpha\in Q_1, \forall \beta^\prime\in Q_1^\prime\rangle.$$
	\end{enumerate}
\end{definition}

It is clear that both $B$ and $A$ occur in a natural way as subalgebras as well as quotients of $\mathcal{A}(B,A)$. Importantly, this fact allows us to view modules over $B$ and $A$ as modules over $\Lambda$. Letting $p_B:\Lambda \to B$ be the natural surjection, we define the action of $a\in \Lambda$ on a $B$-module $M$ by $a\cdot m\coloneqq p_B(a)\cdot_B m$, for all $m\in M$, where $\cdot_B$ denotes the action of $B$ on $M$. For a $B$-module, this coincides with extending the action of $B$ to an action of $\Lambda$ by letting all arrows in the quiver of $\Lambda$ which come from the quiver of $A$ act as 0. Of course, a similar idea works for $A$-modules. Moreover, there are functors
$$F\coloneqq \Lambda\otimes_B \blank: B\operatorname{-mod}\to \Lambda\operatorname{-mod}\quad \textrm{and}\quad G\coloneqq B\otimes_\Lambda\blank: \Lambda\operatorname{-mod}\to B\operatorname{-mod},$$
which will be of importance.
In the above definition, the set $\{1,\dots, n\}$ indexes isomorphism classes of simple modules over $B$ as well as over $A$. Note that $\{1\dots, n\}$ also indexes isomorphism classes of simple modules over $\Lambda=\mathcal{A}(B, A)$. Consider the following quotient of the indecomposable projective $P_B(i)$, called the \emph{standard module} at $i$. 
$$\Delta_B(i)=\faktor{P_B(i)}{\sum_{f:P_B(j)\to P_B(i)} \operatorname{im}f}.$$
Here, the sum is taken over all homomorphisms $f:P_B(j)\to P_B(i)$ such that $i<j$. The algebra $B$ is said to be \emph{quasi-hereditary} the following hold.
\begin{enumerate}[label=$(\roman*)$]
	\item $\operatorname{End}_B (\Delta_B(i))\cong K$ for each $i\in \{1,\dots, n\}$.
	\item The indecomposable projectives $P_B(i)$ are filtered by standard modules, i.e., each projective $P_B(i)$ admits a chain of submodules
	$$0\subset M_0 \subset M_1 \subset \dots \subset M_\ell=P_B(i)$$
	such that all subquotients $\faktor{M_k}{M_{k-1}}$ are standard modules.
\end{enumerate}
\begin{definition}\cite[Definition~3.4]{konig1995exact, bkk}\label{definition:exact borel subalg}
	Let $(\Lambda,<)$ be a quasi-hereditary algebra with $n$ simple modules, up to isomorphism. Then, a subalgebra $B\subset \Lambda$ is called an \emph{exact Borel subalgebra} provided that
	\begin{enumerate}[label=$(\roman*)$]
		\item $B$ also has $n$ simple modules up to isomorphism and $(B,<)$ is directed,
		\item the functor $\Lambda\otimes_B\blank$ is exact, and
		\item there are isomorphisms $\Lambda\otimes_B L_B(i)\cong \Delta_\Lambda(i).$
	\end{enumerate}
If, in addition, the map $\operatorname{Ext}_B^k(L_B(i),L_B(j))\to \operatorname{Ext}_\Lambda^k(\Lambda\otimes_B L_B(i), \Lambda\otimes_B L_B(j))$ induced by the functor $\Lambda\otimes_B\blank$ is an isomorphism for all $k\geq 1$ and $i,j\in\{1,\dots, n\}$, $B\subset \Lambda$ is called a \emph{regular exact Borel subalgebra}. 
\end{definition}
\begin{theorem}\label{theorem:dual ext alg is qh} \cite[Example~1.6]{Xi}
	Let $B\cong \faktor{KQ}{I}$ and $A\cong \faktor{KQ^\prime}{I^\prime}$ be directed algebras. Then $\Lambda=\mathcal{A}(B,A^{\operatorname{op}})$ is quasi-hereditary with respect to the natural ordering on $\{1,\dots, n\}$. Moreover, there are isomorphisms of $\Lambda$-modules $\Delta_\Lambda(i)\cong P_{A^{\operatorname{op}}}(i)$ for all $i\in \{1,\dots, n\}$.
	\end{theorem}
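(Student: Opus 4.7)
I would take the natural total order $1 < 2 < \cdots < n$ coming from the directedness hypothesis. The arrows of $B$ (as arrows of $\Lambda$) all go up and the arrows of $A^{\operatorname{op}}$ all go down, so the crossed relations $\alpha \beta^\prime = 0$ for $\alpha \in Q_1$ and $\beta^\prime \in Q_1^\prime$ force every nonzero monomial in $\Lambda$ to admit a unique normal form as an $A^{\operatorname{op}}$-path composed with a $B$-path, glued at a common vertex (the ``peak''). Establishing this normal form explicitly up front underlies everything else in the proof.

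To identify $\Delta_\Lambda(i)$, I use that $\operatorname{Hom}_\Lambda(P_\Lambda(j), P_\Lambda(i)) \cong e_j \Lambda e_i$ is spanned by paths from $i$ to $j$ in $\Lambda$. For $j > i$ the normal form forces any such path to begin with an arrow of $Q_1$ leaving $i$, so the submodule $\sum_{j > i}\sum_f \operatorname{im}(f)$ coincides with the submodule $\sum_{\alpha \in Q_1,\, s(\alpha) = i} \Lambda \alpha$ of $P_\Lambda(i)$. The quotient $\Delta_\Lambda(i)$ therefore has as basis the $A^{\operatorname{op}}$-paths starting at $i$, and its $\Lambda$-action factors through the canonical surjection $\Lambda \twoheadrightarrow A^{\operatorname{op}}$; this gives $\Delta_\Lambda(i) \cong P_{A^{\operatorname{op}}}(i)$.

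The first quasi-hereditary axiom follows by composing with the projection $P_\Lambda(i) \twoheadrightarrow \Delta_\Lambda(i)$ to obtain an injection $\operatorname{End}_\Lambda(\Delta_\Lambda(i)) \hookrightarrow \operatorname{Hom}_\Lambda(P_\Lambda(i), \Delta_\Lambda(i)) \cong e_i \Delta_\Lambda(i)$; since the quiver of $A^{\operatorname{op}}$ is acyclic, the right-hand side is one-dimensional, spanned by the class of $e_i$. For the $\Delta$-filtration of $P_\Lambda(i)$, I partition the normal-form basis by the peak $m$ to write $P_\Lambda(i) = \bigoplus_{m \geq i} V_m$ as a vector space, and take the descending filtration $F^m = \bigoplus_{m^\prime \geq m} V_{m^\prime}$. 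The crossed relations ensure that $\Lambda$-multiplication cannot decrease the peak, so each $F^m$ is a $\Lambda$-submodule; in $F^m/F^{m+1}$ the residual action of every up-arrow is zero, and one checks that this subquotient splits as a direct sum, indexed by the $B$-paths from $i$ to $m$, of copies of $\Delta_\Lambda(m)$. A routine refinement yields the desired $\Delta$-filtration.

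The main obstacle is the careful verification, using the defining relations of $\mathcal{A}(B, A^{\operatorname{op}})$, that up-arrows annihilate every normal-form basis element whose $A^{\operatorname{op}}$-component is nontrivial, from which both the submodule property of $F^m$ and the $\Lambda$-module structure on each subquotient follow. Once this bookkeeping is in place, the identification of $\Delta_\Lambda(i)$, the endomorphism axiom, and the existence of a $\Delta$-filtration all drop out of the single combinatorial picture of up-then-down paths in $\Lambda$.
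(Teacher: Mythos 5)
Your proposal is correct, but note that the paper does not actually prove this statement: it is quoted from Xi (Example~1.6 of the cited work), so there is no internal proof to compare against. Your argument is essentially the standard one behind Xi's example and behind the facts the paper later imports as Lemma \ref{lemma:regular tensor simple iso standard & regular tensor projective iso projective,tensor preserves projective cover,lemma:tensor functors invertible on objects}: the crossed relations give the triangular decomposition $\Lambda\cong A^{\operatorname{op}}\otimes_S B$ over the semisimple subalgebra $S$ spanned by the idempotents $e_1,\dots,e_n$; from this the trace submodule of $P_\Lambda(i)$ with respect to $j>i$ is $\sum_{\alpha\in Q_1,\,s(\alpha)=i}\Lambda\alpha$, the quotient is $P_{A^{\operatorname{op}}}(i)$ with $\Lambda$ acting through $\Lambda\twoheadrightarrow A^{\operatorname{op}}$, the endomorphism axiom reduces to $e_iA^{\operatorname{op}}e_i\cong K$ by acyclicity, and the peak filtration of $P_\Lambda(i)\cong A^{\operatorname{op}}\otimes_S Be_i$ has subquotients $\Delta_\Lambda(m)^{\oplus\dim e_mBe_i}$. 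Two points deserve a little more care than you give them. First, the uniqueness (linear independence) half of your normal form is precisely the assertion $\Lambda\cong A^{\operatorname{op}}\otimes_S B$ as $S$-bimodules; spanning is immediate from the relations, but independence needs a short argument (for instance, define the multiplication directly on $A^{\operatorname{op}}\otimes_S B$ and exhibit a surjection from $KE/J$ onto it, then compare dimensions with the spanning set), since a priori the ideal $J$ could collapse more than the down-after-up monomials; you flag this step but do not carry it out. Second, the subquotients $F^m/F^{m+1}$ are direct sums of copies of $\Delta_\Lambda(m)$ indexed by a chosen basis of $e_mBe_i$ rather than literally by $B$-paths from $i$ to $m$, since paths may become linearly dependent modulo $I$. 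With these clarifications your outline is a complete and correct proof.
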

\begin{example}\label{example: dual extension algebra}
	Consider the quiver $Q=\xymatrixcolsep{0.3cm}\xymatrix{1\ar[r]^-\alpha & 2\ar[r]^-\beta & 3}$ and put $B=A=KQ$. Then, the dual extension algebra $\Lambda=\mathcal{A}(B,B^{\operatorname{op}})$ is given by the quiver
		$$\xymatrixcolsep{0.3cm}\xymatrix{1\ar@/^.5pc/[r]^-{\alpha} & 2 \ar@/^.5pc/[r]^-{\beta}\ar@/^.5pc/[l]^-{\alpha^\prime} &3 \ar@/^.5pc/[l]^-{\beta\prime}}$$
	
subject to the relations $\alpha \alpha^\prime=0$ and $\beta\beta^\prime=0$. The indecomposable projective modules over $\Lambda$ have Loewy diagrams
\[P_\Lambda(1):\vcenter{\xymatrixcolsep{0.3cm}\xymatrixrowsep{0.3cm}\xymatrix{
& &1 \ar[ld]	\\
& 2 \ar[ld]\ar[rd] & \\
3 \ar[rd] & & 1\\
& 2 \ar[rd]\\
& & 1
}},\quad P_\Lambda(2): \vcenter{\xymatrixcolsep{0.3cm}\xymatrixrowsep{0.3cm}\xymatrix{
& 2 \ar[ld]\ar[rd] & \\
3 \ar[rd] & & 1\\
& 2 \ar[rd]\\
& & 1
}},\quad \textrm{and}\quad P_\Lambda(3): \vcenter{\xymatrixcolsep{0.3cm}\xymatrixrowsep{0.3cm}\xymatrix{
3 \ar[rd]\\
& 2 \ar[rd]\\
& & 1
}}.\]
We see that, then, the standard modules over $\Lambda$ have Loewy diagrams
\[\Delta_\Lambda(1) \cong L_\Lambda(1): \xymatrix{1},\quad \Delta_\Lambda(2): \vcenter{ \xymatrixrowsep{0.3cm}\xymatrix{2 \ar[d]\\1}},\quad \textrm{and}\quad \Delta_\Lambda(3)\cong P_\Lambda(3):\vcenter{ \xymatrixrowsep{0.3cm}\xymatrix{3\ar[d]\\2\ar[d]\\1}}\]
which, as we expect, coincide with the Loewy diagrams of the indecomposable projective modules over $B^{\operatorname{op}}$.
\end{example}
\section{Extensions between standard modules}
In this section, we compute $\operatorname{Ext}_\Lambda^k(\Delta_\Lambda(i),\Delta_\Lambda(j))$ for $B,A$ directed and $\Lambda=\mathcal{A}(B,A^{\operatorname{op}})$. We have the following crucial lemma, which is a collection of observations found in \cite[Lemma~1.6]{DengXi} and \cite[Lemma~2]{Shun}.

\begin{lemma}\label{lemma:regular tensor simple iso standard & regular tensor projective iso projective,tensor preserves projective cover,lemma:tensor functors invertible on objects} 
\begin{enumerate}
	\item The functor $F$ is exact.
	\item  There are isomorphisms of left $\Lambda$-modules $F(L_B(i))\cong \Delta_\Lambda(i)$ and $F(P_B(i))\cong P_\Lambda(i).$
	\item  For any $M\in B\operatorname{-mod}$, $\xymatrix@=0.3cm{P^\bullet \ar[r]& M}$ is a minimal projective resolution if and only if $\xymatrix@=0.3cm{F(P^\bullet)\ar[r]& F(M)}$ is a minimal projective resolution.
	\item The functors $G\circ F$ and $\operatorname{Id}_{B\operatorname{-mod}}$ are naturally isomorphic.
\end{enumerate}
\end{lemma}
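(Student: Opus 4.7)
The plan is to exploit the decomposition of $\Lambda=\mathcal{A}(B,A^{\operatorname{op}})$ dictated by the defining relations. Since $\alpha\beta^\prime=0$ for every $B$-arrow $\alpha$ and every $A^{\operatorname{op}}$-arrow $\beta^\prime$, every non-zero path in $\Lambda$ is uniquely a concatenation of a path in $A^{\operatorname{op}}$ followed by a path in $B$. The first step is to make this precise as an identification of $(A^{\operatorname{op}},B)$-bimodules $\Lambda\cong A^{\operatorname{op}}\otimes_{KQ_0}B$, which realises $\Lambda$ as a free (hence flat) right $B$-module and thereby yields (1).

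For (2), the identification $F(P_B(i))=\Lambda\otimes_B Be_i\cong \Lambda e_i=P_\Lambda(i)$ is routine. For the standard module I would first compute $F(L_B(i))\cong \Lambda e_i/\Lambda\cdot\operatorname{rad}(B)e_i$ and, using the bimodule decomposition above, observe that this quotient collapses the $B$-factor to its simple at $i$, leaving a copy of $A^{\operatorname{op}}e_i=P_{A^{\operatorname{op}}}(i)\cong \Delta_\Lambda(i)$, the last isomorphism by Theorem~\ref{theorem:dual ext alg is qh}. To promote this vector-space identification to an isomorphism of $\Lambda$-modules I would compare the natural surjection $F(P_B(i))\twoheadrightarrow F(L_B(i))$ with $P_\Lambda(i)\twoheadrightarrow \Delta_\Lambda(i)$ and verify that both kernels coincide with the submodule of $\Lambda e_i$ spanned by paths involving at least one $B$-arrow on the $e_i$-side. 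This intertwining-with-the-$\Lambda$-action check is the step I expect to be the most delicate.

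Part (4) follows from associativity of the tensor product together with the fact that $B$ is a quotient of $\Lambda$: $G\circ F(M)\cong (B\otimes_\Lambda \Lambda)\otimes_B M\cong B\otimes_B M\cong M$, naturally in $M$. For (3), the forward direction is immediate from (1) and (2); moreover, directedness of both $B$ and $\Lambda$ forces $e_iBe_i=Ke_i$ and $e_i\Lambda e_i=Ke_i$, with $\operatorname{rad}B\subset \operatorname{rad}\Lambda$, so the radical condition on differentials transfers in both directions. For the converse direction, I would apply $G$ to a given minimal projective resolution $F(P^\bullet)\to F(M)$ and invoke (4) to recover $P^\bullet\to M$ up to natural isomorphism; each $G(F(P_k))\cong P_k$ is projective because $G(\Lambda e_i)=Be_i$, and exactness transfers back through the exact and faithful functor $F$, faithfulness itself being a consequence of $G\circ F=\operatorname{Id}$.
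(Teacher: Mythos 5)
The paper offers no proof of this lemma at all -- it is imported from \cite{DengXi} and \cite{Shun} -- so your direct argument is the natural thing to write out, and its backbone is sound: the relations $\alpha\beta^\prime=0$ do give the bimodule decomposition $\Lambda\cong A^{\operatorname{op}}\otimes_{KQ_0}B$, which exhibits $\Lambda$ as a \emph{projective} (not in general free, but that is all you need for flatness) right $B$-module, so $F$ is exact; the computation $F(L_B(i))\cong \Lambda e_i/\Lambda\operatorname{rad}(B)e_i\cong P_{A^{\operatorname{op}}}(i)\cong\Delta_\Lambda(i)$, including the identification of the two kernels with the span of the paths $q^\prime p$ with $p$ of positive length, is correct; and $G\circ F\cong\operatorname{Id}$ via $B\otimes_\Lambda\Lambda\cong B$ works because the composite $B\hookrightarrow\Lambda\twoheadrightarrow\Lambda/\langle Q_1^\prime\rangle=B$ is the identity.

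The one genuine flaw is your justification of the minimality transfer in (3): you invoke ``directedness of both $B$ and $\Lambda$'' and claim $e_i\Lambda e_i=Ke_i$. This is false -- $\Lambda$ is not directed, and the paper's own example in Section 5 (with $B=K(1\to 2)$, $\Lambda=\mathcal{A}(B,B^{\operatorname{op}})$) has the nonzero element $\alpha^\prime\alpha\in e_1\Lambda e_1$, so $e_1\Lambda e_1$ is two-dimensional. The conclusion you want is nevertheless true and can be salvaged from ingredients you already have: for the forward direction one only needs $\operatorname{rad}B\subset\operatorname{rad}\Lambda$ (true, since the defining ideal of $\Lambda$ is admissible, so $\operatorname{rad}\Lambda$ is the arrow ideal), whence $\operatorname{im}F(d)\subseteq\Lambda(\operatorname{rad}B)F(P^{k-1})\subseteq(\operatorname{rad}\Lambda)F(P^{k-1})$; for the converse, note that $F$ reflects isomorphisms (because $G\circ F\cong\operatorname{Id}$), so if some component $P_B(i)\to P_B(j)$ of a differential were not a radical map it would be an isomorphism between indecomposable projectives, and then the corresponding component of $F(d)$ would be an isomorphism $P_\Lambda(i)\to P_\Lambda(j)$, contradicting minimality of $F(P^\bullet)$. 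Alternatively, the right $B$-module splitting $\Lambda=B\oplus\bigl(\operatorname{rad}(A^{\operatorname{op}})\otimes_{KQ_0}B\bigr)$ shows directly that $(\operatorname{rad}\Lambda)F(P)\cap P=(\operatorname{rad}B)P$. With that repair, the proof is complete.
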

It is now clear that $B\subset \mathcal{A}(B,A^{\operatorname{op}})$ is an exact Borel subalgebra. This was already noticed by Xi in \cite{Xi}. However, we will see in Example \ref{B in dual extension not generally regular}, that $B\subset \mathcal{A}(B,A^{\operatorname{op}})$ is, in general, not regular.

Next, we want to compute the spaces $\operatorname{Ext}_\Lambda^k(\Delta_\Lambda(i),\Delta_\Lambda(j))$ for general $i, j\in\{1,\dots, n\}$, by applying the functor $\operatorname{Hom}_{\Lambda}(\blank,\Delta_\Lambda(j))$ to a minimal projective resolution of $\Delta_\Lambda(i)$. Observe that Lemma \ref{lemma:regular tensor simple iso standard & regular tensor projective iso projective,tensor preserves projective cover,lemma:tensor functors invertible on objects} implies that $\xymatrix@=0.3cm{P^\bullet \ar[r] &L_B(i)}$ is a minimal projective resolution if and only if $\xymatrix@=0.3cm{F(P^\bullet)\ar[r]& \Delta_\Lambda(i)}$ is. This motivates the investigation of the spaces $\operatorname{Hom}_\Lambda(P_\Lambda(i),\Delta_\Lambda(j))$, about which we make the following observation.
\begin{proposition}\label{proposition:homs from projective to standard}
 Let $\pi_i:P_\Lambda( i)\to P_{A^{\operatorname{op}}}(i)$ be the natural projection. Then, there are isomorphisms of vector spaces
\begin{align*}\operatorname{Hom}_\Lambda\left(P_\Lambda(i), \Delta_\Lambda(j)\right)\cong \operatorname{Hom}_{\Lambda}\left(P_{A^{\operatorname{op}}}(i),P_{A^{\operatorname{op}}}(j)\right)\cong \operatorname{Hom}_{A^{\operatorname{op}}}(P_{A^{\operatorname{op}}}(i),P_{A^{\operatorname{op}}}(j))
\end{align*}
for all $i,j,\in\{1,\dots,n\}$,	given by precomposition with $\pi_i$.
\end{proposition}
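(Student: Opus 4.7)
The plan is to identify both $\operatorname{Hom}$-spaces canonically with the same vector space $e_i P_{A^{\operatorname{op}}}(j)$ and then verify that precomposition with $\pi_i$ realizes the resulting bijection. Concretely, for any left $\Lambda$-module $M$ the standard ``evaluation at the idempotent'' isomorphism
\begin{align*}
\operatorname{Hom}_\Lambda(\Lambda e_i, M)\xrightarrow{\sim} e_i M,\qquad f\mapsto f(e_i),
\end{align*}
specializes to $\operatorname{Hom}_\Lambda(P_\Lambda(i),\Delta_\Lambda(j))\cong e_i\Delta_\Lambda(j)$, and the analogous statement over $A^{\operatorname{op}}$ gives $\operatorname{Hom}_{A^{\operatorname{op}}}(P_{A^{\operatorname{op}}}(i),P_{A^{\operatorname{op}}}(j))\cong e_i P_{A^{\operatorname{op}}}(j)$. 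By Theorem~\ref{theorem:dual ext alg is qh}, $\Delta_\Lambda(j)\cong P_{A^{\operatorname{op}}}(j)$ as $\Lambda$-modules, the $\Lambda$-action factoring through the natural surjection $\Lambda\twoheadrightarrow A^{\operatorname{op}}$ which kills the arrows of $B$; since this surjection sends the idempotent $e_i\in\Lambda$ to the idempotent $e_i\in A^{\operatorname{op}}$, the subspaces $e_i\Delta_\Lambda(j)$ and $e_i P_{A^{\operatorname{op}}}(j)$ coincide, so both $\operatorname{Hom}$-spaces are canonically in bijection with the same vector space.

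To conclude, I would verify that the assignment $g\mapsto g\circ\pi_i$ is this bijection. First, one observes that $g\circ\pi_i$ is genuinely $\Lambda$-linear: $\pi_i$ is $\Lambda$-linear by construction, while $g$, being $A^{\operatorname{op}}$-linear between $A^{\operatorname{op}}$-modules that are regarded as $\Lambda$-modules via $\Lambda\twoheadrightarrow A^{\operatorname{op}}$, automatically commutes with the $\Lambda$-action as well. Next, tracing a single element suffices: under the $\Lambda$-side Yoneda evaluation, $g\circ\pi_i$ maps to $(g\circ\pi_i)(e_i)=g(\pi_i(e_i))=g(e_i)$, which is exactly the image of $g$ under the $A^{\operatorname{op}}$-side Yoneda evaluation, so the two evaluations agree. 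I do not foresee any genuine obstacle; the content of the proposition is the observation that both sides canonically represent the same $(i,j)$-component of $A^{\operatorname{op}}$. (As an alternative route, one could apply $\operatorname{Hom}_\Lambda(-,\Delta_\Lambda(j))$ to the short exact sequence $0\to\ker\pi_i\to P_\Lambda(i)\to\Delta_\Lambda(i)\to 0$ and note that $\ker\pi_i$, being generated by the images of the $B$-arrows out of $i$, is annihilated by every map into $\Delta_\Lambda(j)$, since $B$-arrows act as zero on $\Delta_\Lambda(j)$.)
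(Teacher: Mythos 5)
Your proof is correct, but it is organized differently from the paper's. The paper argues concretely with the monomial basis $\{q'p\}$ of $P_\Lambda(i)$: it shows every $\varphi\in\operatorname{Hom}_\Lambda(P_\Lambda(i),\Delta_\Lambda(j))$ kills all basis vectors $q'p$ with $p\neq e_i$, hence factors through $\pi_i$, and then exhibits the two mutually inverse maps $\varphi\mapsto\overline{\varphi}$ and $\psi\mapsto\psi\pi_i$ explicitly. You instead identify both $\operatorname{Hom}$-spaces with $e_iP_{A^{\operatorname{op}}}(j)$ via evaluation at $e_i$, using that $\Delta_\Lambda(j)\cong P_{A^{\operatorname{op}}}(j)$ is an $A^{\operatorname{op}}$-module inflated along $\Lambda\twoheadrightarrow A^{\operatorname{op}}$, and check that precomposition with $\pi_i$ intertwines the two evaluations; this is a clean, canonical argument (essentially the general fact that for a quotient algebra $C$ of $\Lambda$ and a $C$-module $M$ one has $\operatorname{Hom}_\Lambda(\Lambda e_i,M)\cong e_iM\cong\operatorname{Hom}_C(Ce_i,M)$ compatibly with the projection), with Xi's identification $\Delta_\Lambda(j)\cong P_{A^{\operatorname{op}}}(j)$ as the only input specific to dual extension algebras. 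What the paper's hands-on version buys is the explicit factorization statement (every such $\varphi$ vanishes on $\ker\pi_i$, which is spanned by the $q'p$ with $p\neq e_i$), a fact that is quoted verbatim in the proof of the next proposition to show the differentials in $\operatorname{Hom}_\Lambda(\Lambda\otimes_BP_B^\bullet,\Delta_\Lambda(j))$ vanish; your parenthetical alternative (maps into $\Delta_\Lambda(j)$ kill $\ker\pi_i$ because it is generated by $B$-arrows, which act as zero on $\Delta_\Lambda(j)$) recovers exactly that ingredient, so nothing is lost.
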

\begin{proof} We prove the existence of the first isomorphism, as the existence of the second is immediate. Since $\Delta_\Lambda(j)$ and $P_{A^{\operatorname{op}}}(j)$ are isomorphic as $\Lambda$-modules, it suffices to exhibit an isomorphism
	\begin{align*}
	\operatorname{Hom}_\Lambda(P_\Lambda(i), P_{A^{\operatorname{op}}}(j))\cong \operatorname{Hom}_{\Lambda}(P_{A^{\operatorname{op}}}(i),P_{A^{\operatorname{op}}}(j)).
	\end{align*} The module $P_\Lambda(i)$ has a basis given by the set
	$\{q^\prime p \mid s(p)=e_i, t(p)=s(q^\prime)\}$,
	where $p$ is a path in $B$ and $q^\prime$ is a path in $A^{\operatorname{op}}$. Firstly, note that if $q^\prime p$ is such that $p\neq e_i$, then $q^\prime p\in \ker \pi_i$. Clearly, such elements form a basis of $\ker \pi_i$. Let $\varphi: P_\Lambda(i)\to P_{A^{\operatorname{op}}}(j)$ be a homomorphism (of $\Lambda$-modules). We claim that, then, $q^\prime p\in \ker \varphi$ so that $\ker \pi_i\subset \ker \varphi$. Indeed, we have
	\[\varphi(q^\prime p)=\varphi(q^\prime p e_i)=q^\prime p \varphi(e_i)=0\]
	since $\varphi(e_i)$ is an element of $P_{A^{\operatorname{op}}}(j)$ and $p$, being a non-trivial path in $B$, acts as 0 on this module. The fact that $\ker \pi_i \subset \ker \varphi$ implies that the homomorphism $\varphi$ factors uniquely through $\faktor{P_\Lambda(i)}{\ker \pi_i}\cong P_{A^{\operatorname{op}}}(i)$, i.e., there is a unique homomorphism $\overline{\varphi}: P_{A^{\operatorname{op}}}(i)\to P_{A^{\operatorname{op}}}(j)$ making the following diagram commutative.
	\begin{align*}\xymatrix@=0.3cm{P_\Lambda(i) \ar[d]_-{\pi_i} \ar[rd]^-{\varphi} \\
			P_{A^{\operatorname{op}}}(i) \ar[r]_-{\overline{\varphi}} & P_{A^{\operatorname{op}}}(j)	
	}\end{align*}
	Next, define two maps, $\Phi$ and $\Psi$, as follows.
	\begin{align*}\Phi &:\operatorname{Hom}_\Lambda(P_\Lambda(i),P_{A^{\operatorname{op}}}(j)) \to \operatorname{Hom}_{\Lambda}(P_{A^{\operatorname{op}}}(i),P_{A^{\operatorname{op}}}(j)),\quad \varphi \mapsto \overline{\varphi},  \\
	\Psi &: \operatorname{Hom}_{\Lambda}(P_{A^{\operatorname{op}}}(i),P_{A^{\operatorname{op}}}(j))\to \operatorname{Hom}_{\Lambda}(P_{\Lambda}(i),P_{A^{\operatorname{op}}}(j)),\quad \overline{\psi} \mapsto \overline{\psi}\pi_i.
\end{align*}
	From the above considerations, we know that the map $\Phi$ is well-defined. It is clear that $\Psi$ is well-defined, and that both maps are linear. Finally, we find that
	\begin{align*}
		\Phi \circ \Psi (\overline{\psi}) &= \Phi ( \overline{\psi} \pi_i)=\Phi(\psi)=\overline{\psi} \quad \textrm{and}\quad \Psi \circ \Phi (\varphi)=\Psi (\overline{\varphi})=\overline{\varphi}\pi_i=\varphi,
	\end{align*}
	showing that $\Phi$ and $\Psi$ are mutually inverse linear isomorphisms.
\end{proof}
In what follows, we will often need to consider homomorphisms between indecomposable projective $\Lambda$-modules. Such homomorphisms are (linear combinations of) homomorphisms which act by right multiplication with a certain path in the quiver of $\Lambda$. More precisely, for vertices $i$ and $j$, and a path $\xymatrixcolsep{0.3cm}\xymatrix{i\ar[r]^-p & j}$, there is a homomorphism $\rho_p: P_\Lambda(j)\to P_\Lambda(i)$ defined by $x\mapsto xp$.
\begin{proposition}\label{proposition:ext^k between Lambda-standards}
Let $P_B^\bullet$ be a minimal projective resolution of $L_B(i)$ with terms
\begin{align*}P_B^k=\medoplus\limits_{\ell=1}^n P_B(\ell)^{\oplus m_{\ell,k}},\quad k\geq 0.\end{align*}
Then, there are linear isomorphisms
\begin{align*}\operatorname{Ext}_\Lambda^k\left( \Delta_\Lambda(i),\Delta_\Lambda(j)\right)\cong \operatorname{Hom}_\Lambda(\Lambda\otimes_B P_B^k, \Delta_\Lambda(j))\cong \medoplus_{\ell=1}^n \left(e_j A e_\ell\right)^{\oplus m_{\ell,k}}.\end{align*}
\end{proposition}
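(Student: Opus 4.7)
The plan is the following. First apply $F$ to a minimal projective resolution $P_B^\bullet\to L_B(i)$ to obtain, via Lemma \ref{lemma:regular tensor simple iso standard & regular tensor projective iso projective,tensor preserves projective cover,lemma:tensor functors invertible on objects}, a minimal projective resolution $F(P_B^\bullet)\to \Delta_\Lambda(i)$ whose $k$th term is $\bigoplus_\ell P_\Lambda(\ell)^{\oplus m_{\ell,k}}$. Next, apply $\operatorname{Hom}_\Lambda(\blank,\Delta_\Lambda(j))$ termwise; by Proposition \ref{proposition:homs from projective to standard} together with the standard identification $\operatorname{Hom}_{A^{\operatorname{op}}}(P_{A^{\operatorname{op}}}(\ell),P_{A^{\operatorname{op}}}(j))\cong e_jAe_\ell$, this yields
\begin{align*}
\operatorname{Hom}_\Lambda(F(P_B^k),\Delta_\Lambda(j))\cong \bigoplus_\ell (e_jAe_\ell)^{\oplus m_{\ell,k}}.
\end{align*}
Taking cohomology recovers $\operatorname{Ext}_\Lambda^k(\Delta_\Lambda(i),\Delta_\Lambda(j))$, so the proposition reduces to showing that every cochain differential vanishes.

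This vanishing is the crucial step. The key point is that any $\varphi\in\operatorname{Hom}_\Lambda(P_\Lambda(\ell),\Delta_\Lambda(j))$ factors through the natural projection $\pi_\ell:P_\Lambda(\ell)\to \Delta_\Lambda(\ell)$, by Proposition \ref{proposition:homs from projective to standard}, and therefore vanishes on $\ker \pi_\ell$. On the other hand, applying the exact functor $F$ to the short exact sequence $0\to\operatorname{rad}P_B(\ell)\to P_B(\ell)\to L_B(\ell)\to 0$ and comparing with the projective cover of $\Delta_\Lambda(\ell)$ identifies $F(\operatorname{rad}P_B(\ell))$ with $\ker \pi_\ell$ inside $P_\Lambda(\ell)$. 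Minimality of $P_B^\bullet$ forces each differential $d^k$ to have image in $\operatorname{rad}P_B^{k-1}$, so $F(d^k)$ has image in $\bigoplus_\ell(\ker \pi_\ell)^{\oplus m_{\ell,k-1}}$. Hence precomposition with $F(d^k)$ is identically zero on every direct summand, the induced differential on the Hom complex vanishes, and the claimed isomorphism drops out.

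The main obstacle is thus the vanishing of the cochain differentials; this in turn rests on the factorization of every morphism $P_\Lambda(\ell)\to\Delta_\Lambda(j)$ through $\pi_\ell$ together with minimality of the $B$-resolution, both of which are already in hand from the preceding results. Everything else is a formal concatenation of Lemma \ref{lemma:regular tensor simple iso standard & regular tensor projective iso projective,tensor preserves projective cover,lemma:tensor functors invertible on objects} and Proposition \ref{proposition:homs from projective to standard}.
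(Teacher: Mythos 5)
Your proposal is correct and takes essentially the same route as the paper: induce the minimal resolution through $F$, identify each term $\operatorname{Hom}_\Lambda(P_\Lambda(\ell),\Delta_\Lambda(j))\cong e_jAe_\ell$ via Proposition \ref{proposition:homs from projective to standard}, and kill the Hom-complex differentials by combining the factorization of every map $P_\Lambda(\ell)\to\Delta_\Lambda(j)$ through $\pi_\ell$ with the fact that the (radical, $B$-induced) differentials have image in $\ker\pi_\ell$. The paper states this last point as $\operatorname{im}\rho_\gamma\subset\ker\pi_\ell$ for $\gamma$ a path in $B$, which is the same observation as your identification $F(\operatorname{rad}P_B(\ell))=\ker\pi_\ell$.
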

\begin{proof}
By Lemma \ref{lemma:regular tensor simple iso standard & regular tensor projective iso projective,tensor preserves projective cover,lemma:tensor functors invertible on objects}, the module $\Delta_\Lambda(i)$ has a minimal projective resolution with terms
 $$\Lambda \otimes_B P_B^k\cong \medoplus_{\ell=1}^n \Lambda\otimes_B P_B(\ell)^{\medoplus m_{\ell,k}}\cong \medoplus_{\ell=1}^n P_\Lambda(\ell)^{\medoplus m_{\ell, k}}.$$
 We first show that $\operatorname{Ext}_\Lambda^k(\Delta_\Lambda(i),\Delta_\Lambda(j))\cong\operatorname{Hom}_\Lambda(\Lambda\otimes_B P_B^k,\Delta_\Lambda(j))$. Let $\partial$ be the differential on the complex $\operatorname{Hom}_\Lambda( \Lambda\otimes_B P_B^\bullet, \Delta_\Lambda(j))$, with the convention that
$$\partial^k:\operatorname{Hom}_\Lambda(\Lambda\otimes_B P_B^{k-1},\Delta_\Lambda(j)) \to \operatorname{Hom}_\Lambda(\Lambda\otimes_B P_B^{k},\Delta_\Lambda(j)).$$ 
Then, we have $\operatorname{Ext}_\Lambda^k (\Delta_\Lambda(i),\Delta_\Lambda(j))=\faktor{\ker \partial^k}{\operatorname{im}\partial^{k+1}}$. Therefore, to prove our claim, it suffices to show that $\partial$ is the zero map in each degree. The differential $\partial$ is given, in each degree, by a matrix whose entries are linear combinations of maps of the form $\blank \circ \rho_p$. Therefore, it is enough to prove that every map of the form $\blank \circ \rho_p$ is the zero map. Consider 
$$\blank\circ \rho_{p}: \operatorname{Hom}_\Lambda( P_\Lambda(\ell),\Delta_\Lambda(j))\to \operatorname{Hom}_\Lambda(P_\Lambda(\ell^\prime), \Delta_\Lambda(j)).$$
Note that $\rho_p:P_\Lambda(\ell^\prime)\to P_\Lambda(\ell)$ is given by right multiplication with a path $p$ in $B$, because it is in the image of the functor $F$.
	\begin{align*}\xymatrix@=0.3cm{
		&P_\Lambda(\ell) \ar[ld]_{\pi_\ell} \ar[dd]^{\psi}&  \ar[l]_{\rho_p} P_\Lambda(\ell^\prime) \ar[ldd]\\
		P_{A^{\operatorname{op}}}(\ell) \ar[rd]_{\overline{\psi}}\\
		&\Delta_\Lambda(j)
	}\end{align*}
	By the proof of Proposition \ref{proposition:homs from projective to standard}, any homomorphism $\psi$ factors through the projection $\pi_\ell$. For any $x\in P_\Lambda(\ell^\prime)$, we have 
	$$\pi_\ell \circ \rho_p(x)=\pi_\ell (xp)=0$$
	because $p$ is a path in $B$. We conclude that $\operatorname{im}\rho_p \subset \ker \pi_\ell$, which implies that $\psi \circ \rho_p= \overline{\psi} \circ \pi_\ell\circ  \rho_p=0.$ So, in each degree, the matrix constituting the differential $\partial$ has only zero entries. This finishes the proof of the first isomorphism in the statement of the proposition. For the second one, we apply Proposition \ref{proposition:homs from projective to standard}:
	
	\begin{align*}
	\operatorname{Ext}_\Lambda^k\left(\Delta_\Lambda(i),\Delta_\Lambda(j)\right)&\cong\operatorname{Hom}_\Lambda\left( \Lambda\otimes_B P_B^k, \Delta_\Lambda(j) \right)\cong\operatorname{Hom}_\Lambda\left( \medoplus\limits_{\ell=1}^n P_\Lambda(\ell)^{\oplus m_{\ell,k}}, \Delta_\Lambda(j)\right)\\
	&\cong\medoplus_{\ell=1}^n \operatorname{Hom}_\Lambda \left(P_\Lambda(\ell), \Delta_\Lambda(j) \right)^{\oplus m_{\ell,k}}\cong \medoplus_{\ell=1}^n \operatorname{Hom}_{\Lambda}\left(P_{A^{\operatorname{op}}}(\ell),P_{A^{\operatorname{op}}}(j)\right)^{\oplus m_{\ell,k}}\\
	&\cong \medoplus_{\ell=1}^n \left(e_\ell A^{\operatorname{op}} e_j\right)^{\oplus m_{\ell,k}}
	\cong \medoplus_{\ell=1}^n \left(e_j A e_\ell\right)^{\oplus m_{\ell,k}}. \qedhere
	\end{align*}
\end{proof}
\begin{example}\label{B in dual extension not generally regular}
	We consider again the algebra $\Lambda=\mathcal{A}(B,B^{\operatorname{op}})$ where $B=A=K( \xymatrixcolsep{0.3cm}\xymatrix{1\ar[r]^-\alpha & 2\ar[r]^-\beta & 3})$. We wish to compare the spaces $\operatorname{Ext}_B^1(L_B(1),L_B(3))$ and $\operatorname{Ext}_\Lambda^1(\Delta_\Lambda(1),\Delta_\Lambda(3)).$Recall that $\Lambda$ is given by the quiver
	$$\xymatrixcolsep{0.3cm}\xymatrix{1\ar@/^.5pc/[r]^-{\alpha} & 2 \ar@/^.5pc/[r]^-{\beta}\ar@/^.5pc/[l]^-{\alpha^\prime} &3 \ar@/^.5pc/[l]^-{\beta\prime}}$$
	subject to the relations $\alpha \alpha^\prime=0$ and $\beta\beta^\prime=0$. We immediately see that
	$$\dim \operatorname{Ext}_B^1(L_B(1), L_B(3))=0,$$
	since this dimension coincides with the number of arrows $\xymatrixcolsep{0.3cm}\xymatrix{1 \ar[r]^-\gamma & 3}$ in the quiver of $B$, and there are zero such arrows. Since we have a minimal projective resolution 
	$$\xymatrixcolsep{0.3cm}\xymatrix{0\ar[r] & P_B(2) \ar[r] & P_B(1) \ar[r] & L_B(1),}$$ it follows from Proposition \ref{proposition:ext^k between Lambda-standards} and Lemma \ref{lemma:regular tensor simple iso standard & regular tensor projective iso projective,tensor preserves projective cover,lemma:tensor functors invertible on objects}, part (3), that we have
	$$\dim \operatorname{Ext}_\Lambda^1(\Delta_\Lambda(1),\Delta_\Lambda(3))\cong \operatorname{Hom}_\Lambda(P_\Lambda(2), \Delta_\Lambda(3)),$$ and this space contains the map $f$ given by right multiplication by the arrow $\beta^\prime$.
	This means that $\dim \operatorname{Ext}_\Lambda(\Delta_\Lambda(1),\Delta_\Lambda(3)) \geq 1$, so that $B$ is not regular.
\end{example}
Proposition \ref{proposition:ext^k between Lambda-standards} sheds some further light on how $B\subset \Lambda$ fails to be a regular exact subalgebra. Under the assumption of the proposition, one can check that $\operatorname{Ext}_B^k(L_B(i),L_B(j))\cong \operatorname{Hom}_B(P_B^k, L_B(j))$, meaning we get an extension for each copy of $P_B(j)$ appearing in $P_B^k$, because
$$\dim \operatorname{Hom}_B(P_B(\ell), L_B(j))=\begin{cases}
	1, & \textrm{ if } \ell=j,\\
	0& \textrm{ otherwise}.
\end{cases}$$
Similarly, for the standard modules over $\Lambda$, we saw that $\operatorname{Ext}_\Lambda^k(\Delta_\Lambda(i),\Delta_\Lambda(j))\cong \operatorname{Hom}_\Lambda(\Lambda\otimes_B P_B^k, \Delta_\Lambda(j))$. This space decomposes into a direct sum of spaces of the form $\operatorname{Hom}_\Lambda(P_\Lambda(\ell),\Delta_\Lambda(j))$. Of course, if $\ell=j$, this space contains the projection $\pi_j: P_\Lambda(j)\to \Delta_\Lambda(j)$, which is the image of the projection $P_B(j)\to L_B(j)$ under the functor $\Lambda\otimes_B \blank$. However, in Proposition \ref{proposition:homs from projective to standard}, we saw that the spaces $\operatorname{Hom}_\Lambda(P_\Lambda(\ell),\Delta_\Lambda(j))$ are in general not zero for $\ell\neq j$, yielding additional extensions which are not contained in the image of the functor $\Lambda\otimes_B \blank$.
\section{The $\operatorname{Ext}$-algebra of standard modules}
Put $\Delta=\Delta_\Lambda(1)\oplus\dots \oplus \Delta_\Lambda(n)$ and $\mathbb{L}=L_B(1)\oplus\dots\oplus L_B(n).$
The space $\operatorname{Ext}_\Lambda^\ast(\Delta,\Delta)=\medoplus_{k\geq0} \operatorname{Ext}^k_\Lambda(\Delta,\Delta)$ has a natural structure of a graded algebra (as does the space $\operatorname{Ext}_B^\ast(\mathbb{L},\mathbb{L})$), with the multiplication given by Yoneda product. This section is devoted to its description. Let $P_\Lambda^\bullet$ and $Q_\Lambda^\bullet$ be minimal projective resolutions of $\Delta_\Lambda(i)$ and $\Delta_\Lambda(j)$, respectively. Then, $\operatorname{Ext}_\Lambda^k\left(\Delta_\Lambda(i),\Delta_\Lambda(j)\right)\cong \operatorname{Hom}_{\mathcal{K}(\Lambda\operatorname{-proj})}\left(P_\Lambda^\bullet, Q_\Lambda^\bullet[k]\right)$, where $[\blank ]$ denotes the shift functor on complexes. Under this isomorphism, the Yoneda product corresponds to composition of (equivalence classes of) chain maps. We wish to find chain maps corresponding to basis elements of $\operatorname{Ext}_\Lambda^k(\Delta_\Lambda(i),\Delta_\Lambda(j))$ under this isomorphism. Recall that $\operatorname{Ext}_\Lambda^k\left( \Delta_\Lambda(i),\Delta_\Lambda(j)\right)\cong \operatorname{Hom}_\Lambda(P_\Lambda^k, \Delta_\Lambda(j))$ by Proposition \ref{proposition:ext^k between Lambda-standards}, so a basis element of $\operatorname{Ext}_\Lambda^k(\Delta_\Lambda(i),\Delta_\Lambda(j))$ is represented by $\varphi\in  \operatorname{Hom}_\Lambda(P_\Lambda^k, \Delta_\Lambda(j))$.

\begin{align*}\xymatrix{
	P_\Lambda^\bullet:\dots \ar[r] & P_\Lambda^{k+1} \ar@{-->}[d]^-{\varphi_{k+1}}\ar[r]^-{\partial_{k+1}} & P_\Lambda^{k} \ar[rd]^{\varphi}\ar[r]^{\partial_k} \ar@{-->}[d]^{\varphi_k}& P_\Lambda^{k-1}  \ar[r]& \dots \\
	Q_\Lambda^\bullet[k]: \dots \ar[r] & Q_\Lambda^1 \ar[r] & P_\Lambda(j) \ar@{->>}[r]_{\pi_j} & \Delta_\Lambda(j) \ar[r]& \dots
}\end{align*}
Such a chain map should have components $\varphi_k, \varphi_{k+1},\dots$ as indicated above. Additionally, the component $\varphi_k$ should lift the homomorphism $\varphi$, that is, $\pi_j\circ  \varphi_k=\varphi$. In the above picture, the maps are matrices whose entries are homomorphisms between indecomposable projective $\Lambda$-modules, or in the case of the map $\varphi$, homomorphisms $P_\Lambda(x)\to \Delta_\Lambda(j)$. Assume that the projective module $P_\Lambda(j)$ does not appear in the direct sum decomposition of the module $P_\Lambda^k$. By Proposition \ref{proposition:homs from projective to standard}, each entry of the matrix consituting the homomorphism $\varphi: P_\Lambda^k\to \Delta_\Lambda(j)$ factors as $\rho_{\gamma^\prime} \circ \pi_x$, where $x$ is such that $P_\Lambda(x)$ is a direct summand of $P_\Lambda^k$ and ${\gamma^\prime}$ is a linear combination of paths in the quiver of $A^{\operatorname{op}}$. 
$$\xymatrix{
	P_\Lambda(x) \ar[d]_-{\rho_{\gamma^\prime}} \ar[rd]^-{\rho_{\gamma^\prime} \circ \pi_x} \\
	P_\Lambda(j) \ar[r]_-{\pi_j} & \Delta_\Lambda(j)
}$$
Next, we claim that the above triangle commutes. Recall that the module $P_\Lambda(x)$ has a basis made up of elements of the form $q^\prime p$ where $p$ is a path in $B$, starting in $x$, and $q^\prime$ is a path in $A^{\operatorname{op}}$. If $p=e_x$, we have 
$$\rho_{\gamma^\prime} \circ \pi_x (q^\prime p)=\rho_{\gamma^\prime} \circ \pi_x(q^\prime)= \rho_{\gamma^\prime}(q^\prime)=q^\prime {\gamma^\prime} =\pi_j \circ \rho_{\gamma^\prime} (q^\prime)=\pi_j\circ \rho_{\gamma^\prime}(q^\prime p)$$
and, if $p\neq e_x$, we  have
$$\rho_{\gamma^\prime} \circ \pi_x(q^\prime p)=0=\pi_j (q^\prime p {\gamma^\prime} )=\pi_j \circ \rho_{\gamma^\prime}( q^\prime p),$$
since $p{\gamma^\prime}=0$ according to the dual extension relation, which proves the claim. Then, taking the map $\varphi_k$ to be the matrix having an entry $\rho_{\gamma^\prime}$ whenever the matrix describing $\varphi$ has an entry $\rho_{\gamma^\prime}\circ \pi_x$, we see that the following diagram commutes.

$$\xymatrix{
	P_\Lambda^k \ar[d]_-{\varphi_k} \ar[rd]^-{\varphi} \\
	P_\Lambda(j) \ar[r]_-{\pi_j} & \Delta_\Lambda(j)
}$$
Next, note that the differentials on the complexes $P_\Lambda^\bullet$ and $Q_\Lambda^\bullet$ are matrices whose entries are linear combinations of maps $\rho_\alpha$, where $\alpha$ is a path in $B$. If $\beta^\prime$ is a path in $A^{\operatorname{op}}$, we have $\rho_{\beta^\prime}\circ \rho_\alpha=\rho_{\alpha \beta^\prime}=0$ since $\alpha \beta^\prime=0$ according to the dual extension relation. Returning to the picture
\begin{align*}\xymatrix{
		P_\Lambda^\bullet:\dots \ar[r] & P_\Lambda^{k+1} \ar@{-->}[d]^-{\varphi_{k+1}}\ar[r]^-{\partial_{k+1}} & P_\Lambda^{k} \ar[rd]^{\varphi}\ar[r]^{\partial_k} \ar@{-->}[d]^{\varphi_k}& P_\Lambda^{k-1}  \ar[r]& \dots \\
		Q_\Lambda^\bullet[k]: \dots \ar[r] & Q_\Lambda^1 \ar[r] & P_\Lambda(j) \ar@{->>}[r]_{\pi_j} & \Delta_\Lambda(j) \ar[r]& \dots
},\end{align*}
 our observations now imply that we may extend $\varphi_k$ to a chain map by putting $\varphi_{k+1}=\varphi_{k+2}=\dots=0.$ First, we showed that $\pi_j\circ \varphi_k=\varphi$. Then, we checked that $\varphi_k\circ \partial_{k+1}=0$. All other squares commute trivially, meaning we have found a chain map representative of the extension given by $\varphi$.

Note that the above contruction does not work if the projective module $P_\Lambda(j)$ appears in the direct sum decomposition of $P_\Lambda^k$. If it does, the space $\operatorname{Hom}_\Lambda(P_\Lambda^k,\Delta_\Lambda(j))$ contains the projection $\pi_j:P_\Lambda(j)\to\Delta_\Lambda(j)$, which may be lifted to the identity homomorphism on $P_\Lambda(j)$. Then, the collection of maps $\varphi_k,\varphi_{k+1},\dots$ given above is no longer a chain map.

Of particular importance is the special case when $k=0$, corresponding to homomorphisms, rather than proper extensions, from $\Delta_\Lambda(i)$ to $\Delta_\Lambda(j)$. Then, the first part of our picture looks like
$$\xymatrix{
	\dots  \ar[r]& P_\Lambda(i)  \ar@{-->}[d]^-{\varphi_0} \ar[dr]^-{\varphi} \ar[r] & \Delta_\Lambda(i) \\
	\dots \ar[r] & P_\Lambda(j) \ar[r] & \Delta_\Lambda(j),
}$$
which ensures that our construction goes through.
\begin{proposition}\label{proposition:m_2 in ext-algebra}
Let $i,j$ and $\ell$ be vertices such that $i\neq j$, and assume $k\geq 1$. Then, the multiplication
	\begin{align*}m_2:\operatorname{Ext}^k_\Lambda(\Delta_\Lambda(j),\Delta_\Lambda(\ell)) \times \operatorname{Hom}_\Lambda(\Delta_\Lambda(i),\Delta_\Lambda(j)) \to \operatorname{Ext}_\Lambda^{k}(\Delta_\Lambda(i),\Delta_\Lambda(\ell))\end{align*}
is the zero map.
\end{proposition}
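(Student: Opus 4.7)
The plan is to show $m_2(\eta,\varphi) = 0$ at the chain level, for any $\varphi \in \operatorname{Hom}_\Lambda(\Delta_\Lambda(i),\Delta_\Lambda(j))$ and any class $\eta \in \operatorname{Ext}_\Lambda^k(\Delta_\Lambda(j),\Delta_\Lambda(\ell))$, by producing an explicit lift of $\varphi$ to a chain map between projective resolutions that vanishes in every positive degree. The Yoneda product is computed by composing this lift with a cocycle representative of $\eta$, so this will suffice.

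To set things up, I would parametrize $\varphi$ using the isomorphism $\Delta_\Lambda(a) \cong P_{A^{\operatorname{op}}}(a)$: any $\varphi$ is right-multiplication $\rho_{\gamma'}$ by some $\gamma' \in e_i A^{\operatorname{op}} e_j$, which contains at least one $A^{\operatorname{op}}$-arrow since $i \neq j$. Take minimal projective resolutions $P_\Lambda^\bullet = F(P_B^\bullet) \to \Delta_\Lambda(i)$ and $Q_\Lambda^\bullet = F(Q_B^\bullet) \to \Delta_\Lambda(j)$ via Lemma \ref{lemma:regular tensor simple iso standard & regular tensor projective iso projective,tensor preserves projective cover,lemma:tensor functors invertible on objects}, and by Proposition \ref{proposition:ext^k between Lambda-standards} represent $\eta$ as a single morphism $\tilde\eta: Q_\Lambda^k \to \Delta_\Lambda(\ell)$. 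Define the chain-level lift of $\varphi$ by $\tilde\varphi_0 := \rho_{\gamma'}: P_\Lambda(i) \to P_\Lambda(j)$ and $\tilde\varphi_n := 0$ for $n \geq 1$.

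The substantive thing to check is that this is a chain map, which reduces entirely to the identity $\tilde\varphi_0 \circ d_1^P = 0$; the higher squares commute trivially because both sides are zero. This is the main obstacle and the key calculation. Since the resolution $P_\Lambda^\bullet$ lies in the image of $F$, the differential $d_1^P$ is, componentwise, right-multiplication by an individual $B$-arrow $\beta$, so $\tilde\varphi_0 \circ \rho_\beta = \rho_{\beta\gamma'}$. Since $\beta$ is a $B$-arrow while $\gamma'$ begins, in the path-algebra convention, with an $A^{\operatorname{op}}$-arrow $\beta'$, the defining relation $\alpha\beta' = 0$ of $\mathcal{A}(B,A^{\operatorname{op}})$ forces $\beta\gamma' = 0$. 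With the chain map thus in hand, the Yoneda product $m_2(\eta,\varphi)$ is represented by $\tilde\eta \circ \tilde\varphi_k = 0$, completing the proof. Beyond keeping the orientation conventions straight so that $\alpha\beta' = 0$ is exactly the relation that kills the composition, no serious difficulty arises.
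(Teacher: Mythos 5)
Your proof is correct and follows essentially the same route as the paper: you lift $\varphi$ to a chain map whose only nonzero component is $\rho_{\gamma'}\colon P_\Lambda(i)\to P_\Lambda(j)$ in degree $0$ (this is exactly the paper's $\Theta^\gamma$-construction, valid because $i\neq j$ makes $\gamma'$ a combination of nontrivial $A^{\operatorname{op}}$-paths and the dual extension relations kill its composition with the $B$-path differentials), and then observe that composing with a degree-$k$ cocycle for $k\geq 1$ gives zero. The only omission is the routine check that this lift is compatible with the augmentations $\pi_i,\pi_j$, which the paper verifies explicitly in the discussion preceding the proposition.
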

\begin{proof}
	Let $P_\Lambda^\bullet, Q_\Lambda^\bullet$ and $R_\Lambda^\bullet$ be minimal projective resolutions of $\Delta_\Lambda(i),\Delta_\Lambda(j)$ and $\Delta_\Lambda(\ell)$, respectively. The fact that $i$ and $j$ are distinct ensures that, if $\varphi\in \operatorname{Hom}_\Lambda(\Delta_\Lambda(i),\Delta_\Lambda(j))$ is a homomorphism, our construction of the chain map representing $\varphi$, in the previous discussion, goes through. Consider the following diagram.
	
	$$\xymatrix{
	P_\Lambda^\bullet: \dots & \dots & \dots  & \dots\ar[r] & P_\Lambda^1\ar[r] \ar@{-->}[d]^-{0} & \ar@{-->}[d]^-{\varphi_0} P_\Lambda^0\ar[dr]^-{\varphi } \ar[r] & \Delta_\Lambda(i) \\
	Q_\Lambda^\bullet: \dots \ar[r] & Q_\Lambda^{k+1} \ar[r] \ar@{-->}[rrrd]_-{\psi_{k+1}} & Q_\Lambda^k \ar[r]\ar@/^1ex/[rrrrd]^->>>>>>>>>{\psi} \ar@{-->}[rrrd]^-{\psi_k}& \dots\ar[r] & Q_\Lambda^1\ar[r] & Q_\Lambda^0 \ar[r] & \Delta_\Lambda(j) \\
	R_\Lambda^\bullet: \dots &\dots &\dots & \dots\ar[r]&R_\Lambda^1 \ar[r]& R_\Lambda^0 \ar[r] & \Delta_\Lambda(\ell)
	}$$
	 Since the only nonzero component of the chain map representing $\varphi$ has codomain $Q_\Lambda^0$ and $k\geq 1$, the statement follows.
\end{proof}
\begin{theorem}\label{theorem:ext-algebra of standards isomorphic to dual extension of ext-algebra of simples and B}There are isomorphisms of graded algebras
	\begin{align*}\operatorname{Ext}_\Lambda^\ast\left(\Delta,\Delta\right) \cong \mathcal{A}\left(\operatorname{Ext}_B^\ast(\mathbb{L},\mathbb{L}), \operatorname{Hom}_\Lambda(\Delta,\Delta)\right)\cong  \mathcal{A}\left(\operatorname{Ext}_B^\ast(\mathbb{L},\mathbb{L}), A\right).\end{align*}
\end{theorem}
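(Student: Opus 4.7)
The plan is to construct an explicit isomorphism $\Phi: \mathcal{A}(E,A) \to \operatorname{Ext}_\Lambda^\ast(\Delta,\Delta)$ with $E := \operatorname{Ext}_B^\ast(\mathbb{L},\mathbb{L})$, built out of the chain map representatives described before Proposition \ref{proposition:m_2 in ext-algebra}. On the vector-space level, $e_j\mathcal{A}(E,A)e_i$ in degree $k$ equals $\bigoplus_\ell (e_j A e_\ell)\otimes_K(e_\ell E^k e_i)$ by the definition of the dual extension algebra, which matches the description of $\operatorname{Ext}_\Lambda^k(\Delta_\Lambda(i),\Delta_\Lambda(j))$ provided by Proposition \ref{proposition:ext^k between Lambda-standards} term-by-term. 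I would define $\Phi$ on a basis element $\gamma\cdot\varepsilon$ (with $\gamma\in e_jAe_\ell$ a path in $A$ and $\hat\varepsilon: P_B^k\to P_B(\ell)$ lifting a representative of $\varepsilon\in e_\ell E^k e_i$ through $\pi_\ell$) as the cocycle $\rho_\gamma\pi_\ell\circ F(\hat\varepsilon)\in\operatorname{Hom}_\Lambda(P_\Lambda^k,\Delta_\Lambda(j))$. This automatically represents an $\operatorname{Ext}$-class by the vanishing of the differential established in the proof of Proposition \ref{proposition:ext^k between Lambda-standards}, and by construction $\Phi$ is a linear isomorphism.

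Next I would verify multiplicativity on a product $(\gamma_2\varepsilon_2)(\gamma_1\varepsilon_1)$ of basis elements. The defining relation $\alpha\beta'=0$ of Definition \ref{def:dual extension algebra} splits the analysis into three cases:
\begin{enumerate}[label=$(\roman*)$]
\item $k_2=0$: the $\mathcal{A}(E,A)$-product is $(\gamma_2\gamma_1)\varepsilon_1$; on the Ext side, post-composition with the morphism $\gamma_2 \in \operatorname{Hom}_\Lambda(\Delta_\Lambda(j),\Delta_\Lambda(m))$ reduces, via Proposition \ref{proposition:homs from projective to standard} together with the identity $\gamma_2\circ\rho_{\gamma_1}\pi_\ell=\rho_{\gamma_2\gamma_1}\pi_\ell$ used in the construction of $\Theta^\gamma$, to $\Phi((\gamma_2\gamma_1)\varepsilon_1)$.
\item $\gamma_1=e_j$: the $\mathcal{A}(E,A)$-product is $\gamma_2(\varepsilon_2\varepsilon_1)$; here I would extend $\hat\varepsilon_1$ to a full chain map $P_B^\bullet\to Q_B^\bullet[k_1]$ in $B$ (with $Q_B^\bullet\to L_B(j)$ a minimal projective resolution) and apply $F$; by Lemma \ref{lemma:regular tensor simple iso standard & regular tensor projective iso projective,tensor preserves projective cover,lemma:tensor functors invertible on objects} this yields a chain map lift of $\Phi(\varepsilon_1)$, so the $\Lambda$-Yoneda product reduces to $F$ applied to the $B$-Yoneda product, giving $\Phi(\gamma_2(\varepsilon_2\varepsilon_1))$.
\item $k_2\geq 1$ and $\gamma_1$ a nontrivial path in $A^{\operatorname{op}}$: the $\mathcal{A}(E,A)$-product vanishes; on the Ext side, the cocycle $\Phi(\gamma_1\varepsilon_1)$ extends to a chain map lift whose only nonzero component sits at position $-k_1$, so the Yoneda composition with $\Phi(\gamma_2\varepsilon_2)$ at position $-k_2\leq -1$ of $Q_\Lambda^\bullet$ meets a zero component and vanishes.
\end{enumerate}

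The hard part is case (iii): one must check that the single-component extension is genuinely a chain map lift through a full projective resolution of $\Delta_\Lambda(j)$, not merely a cocycle. This is precisely the argument driving Proposition \ref{proposition:m_2 in ext-algebra}, but now needed without the assumption that the three vertices be distinct. The crucial input is that the differentials of $F(P_B^\bullet)$ are right multiplication by paths in $B$ while $\rho_{\gamma_1}$ is right multiplication by a path in $A^{\operatorname{op}}$, so that the defining relation $\alpha\beta'=0$ of the dual extension algebra forces every relevant composite to vanish, allowing extension by zero beyond position $-k_1$. Once this is in place, the three cases together show that $\Phi$ preserves multiplication on basis elements, and hence is an algebra isomorphism.
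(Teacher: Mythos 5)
Your proposal is correct and rests on essentially the same ingredients as the paper's proof: the factorization of Proposition~\ref{proposition:homs from projective to standard}, the vanishing of the Hom-complex differential from Proposition~\ref{proposition:ext^k between Lambda-standards}, the one-component chain-map lift underlying Proposition~\ref{proposition:m_2 in ext-algebra} (whose distinct-vertex hypothesis is indeed only a convenience, as you note, since directedness and the radical condition are what matter), and the compatibility of $F=\Lambda\otimes_B\blank$ with Yoneda products. The only difference is organizational: the paper first obtains an algebra homomorphism from the two embeddings together with the vanishing of the mixed products and then deduces bijectivity from generation plus a dimension count, whereas you fix the explicit basis-level bijection first and verify multiplicativity case by case.
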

\begin{proof}
	The algebras $\operatorname{Ext}_\Lambda^\ast(\Delta,\Delta)$ and $\operatorname{Ext}_B^\ast(\mathbb{L},\mathbb{L})$ are naturally graded by the degree of extensions. We extend these to gradings on $\mathcal{A}\left(\operatorname{Ext}_B^\ast(\mathbb{L},\mathbb{L}), \operatorname{Hom}_\Lambda(\Delta,\Delta)\right)$ and on $\mathcal{A}\left(\operatorname{Ext}_B^\ast(\mathbb{L},\mathbb{L}), A\right)$ by letting elements of $\operatorname{Hom}_\Lambda(\Delta,\Delta)$ and of $A$ be homogeneous of degree 0. We first prove that $\operatorname{Ext}_\Lambda^\ast\left(\Delta,\Delta\right) \cong \mathcal{A}\left(\operatorname{Ext}_B^\ast(\mathbb{L},\mathbb{L}), \operatorname{Hom}_\Lambda(\Delta,\Delta)\right)$.
	
	Suppose the simple $B$-modules $L_B(i),L_B(j)$ and $L_B(\ell)$ have minimal projective resolutions $P_B^\bullet,Q_B^\bullet$ and $R_B^\bullet$, respectively. Let $P_\Lambda^\bullet,Q_\Lambda^\bullet$ and  $R_\Lambda^\bullet$ be the induced projective resolutions of $\Delta_\Lambda(i),\Delta_\Lambda(j)$ and $\Delta_\Lambda(\ell)$, respectively. By Proposition \ref{proposition:ext^k between Lambda-standards}, there holds $\operatorname{Ext}_\Lambda^k(\Delta_\Lambda(i),\Delta_\Lambda(j))\cong\operatorname{Hom}_\Lambda(P_\Lambda^k,\Delta_\Lambda(j)).$
	It is easy to check that, similarly, there holds $\operatorname{Ext}^k_B(L_B(i),L_B(j))\cong\operatorname{Hom}_B(P_B^k, L_B(j)).$ Lemma \ref{lemma:regular tensor simple iso standard & regular tensor projective iso projective,tensor preserves projective cover,lemma:tensor functors invertible on objects} implies that the functor $F$ yields a (linear) map \begin{align*}\Lambda\otimes_B\blank: \operatorname{Hom}_B(P_B^k,L_B(j)) \to \operatorname{Hom}_\Lambda(P_\Lambda^k,\Delta_B(j)).\end{align*}
	As the functor $F$ is faithful, the map is injective. Considering the diagram
	
	\begin{align*}\xymatrix{
		\operatorname{Ext}^k_B(L_B(i),L_B(j)) \ar[r]^-{\Lambda\otimes_B\blank} \ar@{}[d]|*=0[@]{\cong}& \operatorname{Ext}^k_\Lambda(\Delta_\Lambda(i),\Delta_B(j)) \ar@{}[d]|*=0[@]{\cong}\\
		\operatorname{Hom}_{\mathcal{K}(B)}\left(P_B^\bullet,Q_B^\bullet[k]\right) \ar@{-->}[r]& \operatorname{Hom}_{\mathcal{K}(\Lambda)}\left(P_\Lambda^\bullet,Q_\Lambda^\bullet[k]\right)
	}\end{align*}
	we see that it may be made to commute by letting the dashed arrow represent the map which takes a chain map $(f)_i$ to the chain map $(\operatorname{id}_\Lambda \otimes f)_i.$ For $f\in \operatorname{Hom}_{\mathcal{K}(B)}(P_B^\bullet,Q_B^\bullet[k])$ and $g\in \operatorname{Hom}_{\mathcal{K}(B)}(Q_B^\bullet,R_B^\bullet[k^\prime])$ the chain map $(gf)_i\in \operatorname{Hom}_{\mathcal{K}(B)}(P_B^\bullet,R_B^\bullet[k+k^\prime])$
	has components $g_{i+k}f_i:P_B^i\to R_B^{i+k+k^\prime}.$ We check that $\Lambda\otimes_B\blank$ is compatible with composition of chain maps and thus extends to an injective homomorphism of (graded) algebras $\Lambda\otimes_B\blank: \operatorname{Ext}_B^\ast(\mathbb{L},\mathbb{L})\hookrightarrow \operatorname{Ext}_\Lambda^\ast(\Delta,\Delta)$.
	\begin{align*}
	(\operatorname{id}_\Lambda \otimes g_{i+k}f_i )(x\otimes y)&=x\otimes g_{i+k}f_i (y)=(\operatorname{id}_\Lambda \otimes g_{i+k})(x\otimes f_i(y))=(\operatorname{id}_\Lambda \otimes g_{i+k})(\operatorname{id}_\Lambda \otimes f_{i})(x\otimes y).
	\end{align*}
	
	Next, if $\varepsilon\in \Lambda\otimes_B \operatorname{Ext}_B^\ast(\mathbb{L},\mathbb{L})$ is such that $\deg \varepsilon\geq 1$ and $\varphi \in \operatorname{Hom}_\Lambda(\Delta_\Lambda(i),\Delta_\Lambda(j))\subset \operatorname{Ext}_\Lambda^\ast(\Delta,\Delta)$ with $i\neq j$, then $m_2(\varepsilon,\varphi)=0$ by Proposition \ref{proposition:m_2 in ext-algebra}. This fact, together with the existence of the embeddings of algebras $\operatorname{Ext}_B^\ast(\mathbb{L},\mathbb{L}) \hookrightarrow \operatorname{Ext}_\Lambda^\ast(\Delta,\Delta)$ and $\operatorname{Hom}_\Lambda(\Delta,\Delta) \hookrightarrow \operatorname{Ext}_\Lambda^\ast(\Delta,\Delta)$, implies that there is a homomorphism of graded algebras
	$$\Phi: \mathcal{A}\left(\operatorname{Ext}_B^\ast(\mathbb{L},\mathbb{L}), \operatorname{Hom}_\Lambda(\Delta,\Delta)\right)\to\operatorname{Ext}_\Lambda^\ast(\Delta,\Delta).$$
	
	We claim that $\Phi$ is surjective. Suppose that the resolution $P_B^\bullet \to L_B(i)$ has terms $P_B^k=\medoplus_{t=1}^n P_B(t)^{\oplus m_{t,k}}$. Then, the resolution $P_\Lambda^\bullet \to \Delta_B(i)$ has terms $P_\Lambda^k=\medoplus_{t=1}^n P_\Lambda(t)^{\oplus m_{t,k}}$. Fix an extension
	$$\alpha \in \operatorname{Ext}_\Lambda^k(\Delta_\Lambda(i),\Delta_\Lambda(j))\cong \operatorname{Hom}_\Lambda(P_\Lambda^k,\Delta_\Lambda(j)\cong\medoplus_{t=1}^n \operatorname{Hom}_\Lambda(P_\Lambda(t),\Delta_\Lambda(j))^{\oplus m_{t,k}},$$
 where we use Proposition \ref{proposition:ext^k between Lambda-standards}, and additivity of the Hom-functor, to obtain the chain of isomorphisms. The extension $\alpha$ is represented by a matrix, whose entries are maps $\alpha_t^1,\dots, \alpha_t^{m_{t,k}}\in \operatorname{Hom}_\Lambda(P_\Lambda(t),\Delta_\Lambda(j))$. Here, $\alpha_t^r$ is the entry correspondng to the $r$th copy of $P_\Lambda(t)$ occuring in $P_\Lambda^k$. Similarly, we have
	$$\operatorname{Ext}_B^k(L_B(i),L_B(j))=\operatorname{Hom}_B(P_B^k,L_B(j))\cong \medoplus_{t=1}^n\operatorname{Hom}_B(P_B(t),L_B(j))^{m_{t,k}}.$$
	The spaces on $\operatorname{Hom}_B(P_B(t),L_B(j))$ are zero unless $t=j$, in which case they contain exactly the projection $P_B(j)\to L_B(j)$ (up to a scalar). This means that the image of $\operatorname{Hom}_B(P_B(t),L_B(j))$ under the functor $F$ is spanned by the projection $\pi_j: P_\Lambda(j)\to \Delta_\Lambda(j)$ if $t=j$, and 0 otherwise. Now we apply Proposition \ref{proposition:homs from projective to standard} to see that any $\alpha_t^r\in \operatorname{Hom}_\Lambda(P_\Lambda(t),\Delta_\Lambda(j))$ may be written as $\alpha_t^r=\overline{\alpha_t^r}\pi_t$ where $\pi_t:P_\Lambda(t)\to \Delta_\Lambda(t)$ is the natural projection and $\overline{\alpha_t^r}\in \operatorname{Hom}_\Lambda(\Delta_\Lambda(t),\Delta_\Lambda(j))$. This shows that the image under the functor $F$ of $\operatorname{Ext}_B^\ast(\mathbb{L},\mathbb{L})$, together with $\operatorname{Hom}_\Lambda(\Delta,\Delta)$, is enough to generate $\operatorname{Ext}_\Lambda^\ast(\Delta,\Delta)$, so $\Phi$ is surjective. To finish the proof, we count dimensions. Using Propositions \ref{proposition:homs from projective to standard} and \ref{proposition:ext^k between Lambda-standards}, we have
	\begin{align*}\dim \operatorname{Ext}_\Lambda^k(\Delta_\Lambda(i),\Delta_\Lambda(j))&=\dim \operatorname{Hom}_\Lambda(P_\Lambda^k, \Delta_\Lambda(j))=\dim \operatorname{Hom}_\Lambda \left( \medoplus_{\ell=1}^n P_\Lambda(\ell)^{\oplus m_{\ell,k}}, \Delta_\Lambda(j)\right)=\\
		&=\dim \medoplus_{\ell=1}^n \operatorname{Hom}_\Lambda(P_\Lambda(\ell),\Delta_\Lambda(j))^{\oplus m_{\ell,k}}=\sum_{\ell=1}^n m_{\ell,k}\dim \operatorname{Hom}_\Lambda(P_\Lambda(\ell),\Delta_\Lambda(j)).
	\end{align*}
We compare this to the degree $k$ part of $e_j \mathcal{A}(\operatorname{Ext}^\ast_B(\mathbb{L},\mathbb{L}),\operatorname{Hom}_\Lambda(\Delta,\Delta))e_i.$ Such an element may only be obtained by multiplying an extension $\varepsilon_k \in \Lambda\otimes_B \operatorname{Ext}^\ast_B(\mathbb{L},\mathbb{L})$, with $\deg \varepsilon_k=k$, with an element $\varphi\in \operatorname{Hom}_\Lambda(\Delta,\Delta)$, on the left. That is, an element of degree $k$ should have the form $\varphi \varepsilon_k$. For this composition to be nonzero, we must have $\varepsilon_k\in \Lambda\otimes_B\operatorname{Ext}_B^k(L_B(i),L_B(\ell))$ and $\varphi \in \operatorname{Hom}_\Lambda(\Delta_\Lambda(\ell),\Delta_\Lambda(j))$ for some $\ell$. For the extensions between the simple modules, we have
\begin{align*}
	\dim \operatorname{Ext}_B^k (L_B(i),L_B(\ell))&= \dim \operatorname{Hom}_B (P_B^k, L_B(\ell))=\dim \operatorname{Hom}_B \left( \medoplus_{t=1}^n P_B(t)^{\oplus m_{t,k}}, L_B(\ell) \right)\\
	&=\sum_{t=1}^n m_{t,k} \dim \operatorname{Hom}_B(P_B(t),L_B(\ell))= m_{\ell,k}\cdot \dim \operatorname{Hom}_B ( P_B(\ell), L_B(\ell))=m_{\ell,k}
\end{align*}
since $\dim \operatorname{Hom}_B (P_B(t),L_B(\ell))=1$ if $t=\ell$ and zero otherwise.
Summing the possibilities over all $\ell$, we get that the degree $k$ part of $e_j \mathcal{A}(\operatorname{Ext}_B(\mathbb{L},\mathbb{L}),\operatorname{Hom}_\Lambda(\Delta,\Delta))e_i$ has dimension
	\begin{align*} \sum_{\ell=1}^n \dim \operatorname{Ext}_B^k(L_B(i),L_B(\ell)) \cdot \dim \operatorname{Hom}_\Lambda(\Delta_\Lambda(\ell),\Delta_\Lambda(j))&=\sum_{\ell=1}^n m_{\ell,k} \cdot \dim \operatorname{Hom}_\Lambda(\Delta_\Lambda(\ell),\Delta_\Lambda(j))
	.\end{align*}
	
	From this, it follows that $\dim \mathcal{A}\left(\operatorname{Ext}_B^\ast(\mathbb{L},\mathbb{L}),\operatorname{Hom}_\Lambda(\Delta,\Delta)\right) = \dim \operatorname{Ext}_\Lambda^\ast(\Delta,\Delta)$
	so that $\Phi$ is a surjective linear map between vector spaces of the same dimension, hence an isomorphism. This proves the first isomorphism in the statement. For the second, we note that as $\Lambda$-modules, we have
	\begin{align*}\Delta=\Delta_\Lambda(1)\oplus \dots, \oplus \Delta_\Lambda(n)\cong P_{A^{\operatorname{op}}}(1)\oplus \dots, \oplus P_{A^{\operatorname{op}}}(n)\cong A^{\operatorname{op}}e_1\oplus\dots\oplus A^{\operatorname{op}}e_n\cong A^{\operatorname{op}}\end{align*}
	and since $\operatorname{End}_\Lambda(A^{\operatorname{op}})\cong A$ as algebras, we get
	$$\operatorname{Ext}_\Lambda^\ast(\Delta,\Delta)\cong \mathcal{A}(\operatorname{Ext}_B^\ast(\mathbb{L},\mathbb{L}), \operatorname{Hom}_\Lambda(\Delta,\Delta))\cong \mathcal{A}(\operatorname{Ext}_B^\ast(\mathbb{L},\mathbb{L}),A). \hfill \qedhere$$
\end{proof}
\section{Koszulity}
In this section, we investigate some of the properties of $\Lambda$ as a graded algebra. Any directed algebra admits a $\mathbb{Z}$-grading by path length, and these gradings extend to a grading on $\Lambda=\mathcal{A}(B,A^{\operatorname{op}})$. Recall that a graded $\Lambda$-module $M$ is said to be \emph{generated in degree} $i$ if $\Lambda M_i=M$. If $M$ has a projective resolution consisting of graded $\Lambda$-modules $\xymatrix@=0.3cm{\dots\ar[r]&P_1\ar[r] &P_0\ar[r]& M}$
such that $P_i$ is generated in degree $i$ for all $i\geq 0$, we say that $M$ has a \emph{linear resolution}. The algebra $\Lambda$ is said to be \emph{Koszul} if every simple module $L_\Lambda(i)$ has a linear resolution. We remark that in these notions, $\Lambda$ may in principle be any graded algebra. For further details on graded algebras and modules, we refer to \cite{GORDONGREEN}. In \cite{Lixu}, the authors prove the following theorem, connecting the koszulity of $\mathcal{A}(B,A^{\operatorname{op}})$ to the koszulity of $B$ and of $A$.

\begin{theorem}\label{theorem:Lambda koszul iff A and B koszul} \cite[Proposition~3.5]{Lixu}
	The algebra $\Lambda=\mathcal{A}(B,A^{\operatorname{op}})$ is Koszul if and only if both $B$ and $A$ are Koszul.
\end{theorem}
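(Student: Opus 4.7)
The plan is to analyze projective resolutions of the simple $\Lambda$-modules by decomposing them into a ``$B$-part'' and an ``$A^{\operatorname{op}}$-part'', using the exact Borel $B \subset \Lambda$ together with the quotient $\pi \colon \Lambda \twoheadrightarrow A^{\operatorname{op}}$ obtained by killing all arrows of $B$. Both $F = \Lambda \otimes_B \blank$ and $\pi$ preserve the natural path-length grading. The starting point is that, by Lemma \ref{lemma:regular tensor simple iso standard & regular tensor projective iso projective,tensor preserves projective cover,lemma:tensor functors invertible on objects}, $F$ sends a minimal projective $B$-resolution of $L_B(i)$ to a minimal projective $\Lambda$-resolution of $\Delta_\Lambda(i)$, and does so while preserving the degree in which each term is generated. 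Hence
\begin{align*}
\Delta_\Lambda(i) \text{ has a linear } \Lambda\text{-resolution} \iff L_B(i) \text{ has a linear } B\text{-resolution},
\end{align*}
so all standard $\Lambda$-modules admit linear resolutions precisely when $B$ is Koszul. This gives half of the equivalence in a particularly clean ``standardized'' form and is the natural analogue the author promises in the introduction.

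For the sufficient direction, suppose that both $B$ and $A$ are Koszul. I would assemble a linear projective $\Lambda$-resolution of $L_\Lambda(i)$ as the total complex of a double complex. Its horizontal direction is the $\Delta$-resolution of $L_\Lambda(i)$ obtained by pulling back a (linear, by Koszulity of $A$) minimal projective $A^{\operatorname{op}}$-resolution of $L_{A^{\operatorname{op}}}(i)$ along $\pi$; since $P_{A^{\operatorname{op}}}(j) \cong \Delta_\Lambda(j)$ as $\Lambda$-modules, the resulting sequence
\begin{align*}
\cdots \to \bigoplus_j \Delta_\Lambda(j)^{\oplus n_{j,k}} \to \cdots \to \Delta_\Lambda(i) \to L_\Lambda(i) \to 0
\end{align*}
has its $k$-th term generated in degree $k$. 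The vertical direction resolves each $\Delta_\Lambda(j)$ by its linear projective $\Lambda$-resolution from the first step. The zero relations $\alpha \beta' = 0$ in $\Lambda$ guarantee that degrees from the two directions add correctly in the total complex: the factor of degree $k$ coming from $A^{\operatorname{op}}$-paths is post-composed with factors of degree $q$ coming from $B$-paths, producing generators in path-length $k+q$. Minimality of the total complex follows from minimality of its two constituents and the absence of differentials that could cancel generators across columns, and $\Lambda$ is then Koszul.

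For the converse, assume $\Lambda$ is Koszul. Koszulity of $B$ is the easier half: by the ``if and only if'' established at the outset, the linear projective $\Lambda$-resolution of $\Delta_\Lambda(i)$ is, up to isomorphism, the image under $F$ of the minimal projective $B$-resolution of $L_B(i)$, and degrees transfer back because $F$ identifies $P_B(j)$ with $P_\Lambda(j)$ grading-preservingly. For Koszulity of $A$, I would identify the $\Delta$-filtration of a minimal linear resolution of $L_\Lambda(i)$ with the $A^{\operatorname{op}}$-projective resolution of $L_{A^{\operatorname{op}}}(i)$ under $\pi$, then argue that the multiplicities $n_{j,k}$ governing this filtration must themselves arise from a linear $A^{\operatorname{op}}$-resolution. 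The main obstacle lies precisely here: the roles of $B$ and $A^{\operatorname{op}}$ inside $\Lambda$ are asymmetric ($B$ is a subalgebra, $A^{\operatorname{op}}$ a quotient), so the clean $F/G$ round-trip that handles $B$ is not available for $A$. One promising alternative would be to close the converse via a Hilbert series argument, comparing the generating functions of $\Lambda$, $B$ and $A$ in light of the factorization $\Lambda \cong A^{\operatorname{op}} \otimes_{K^n} B$ of the underlying graded vector space, and concluding Koszulity of $A$ from a numerical Koszul criterion once Koszulity of $\Lambda$ and $B$ is known.
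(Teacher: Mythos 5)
First, a point of comparison: the paper does not prove this statement at all — it is quoted from Li and Xu (\cite[Proposition~3.5]{Lixu}), so there is no internal proof to measure your argument against; it has to stand on its own. Your opening equivalence ($\Delta_\Lambda(i)$ has a linear resolution $\iff$ $L_B(i)$ does) is correct and is exactly the paper's Theorem \ref{theorem: lambda left standard koszul iff b koszul}, and your sufficiency direction ($B$, $A$ Koszul $\Rightarrow$ $\Lambda$ Koszul) is a reasonable sketch: restricting a linear $A^{\operatorname{op}}$-resolution of $L_{A^{\operatorname{op}}}(i)$ along $\pi$ gives a linear $\Delta$-resolution of $L_\Lambda(i)$, each $\Delta_\Lambda(j)$ has a linear $\Lambda$-resolution via $F$, and a Cartan--Eilenberg/horseshoe-type lift assembles these into a linear total complex (the lifting of the differentials is the step you gloss over, and is what actually needs writing; minimality is not needed, since any linear resolution suffices for the definition used here).

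The converse, however, has genuine gaps. For $B$: you invoke ``the iff established at the outset,'' but that equivalence compares Koszulity of $B$ with linearity of resolutions of the \emph{standard} modules $\Delta_\Lambda(i)$, not with Koszulity of $\Lambda$. To use it you must first show that $\Lambda$ Koszul implies every $\Delta_\Lambda(i)$ has a linear resolution, i.e.\ that Koszul implies standard Koszul; this is false for general quasi-hereditary Koszul algebras and would require a dual-extension-specific argument (note also that the paper's Lemma \ref{lemma:homological and internal gradings on ext(delta,delta) coincide} works with the modified grading in which the $A^{\operatorname{op}}$-arrows have degree $0$, which is not the path-length grading relevant to Koszulity of $\Lambda$). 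For $A$: you acknowledge the argument is missing, and the proposed repair cannot work as stated — Hilbert-series identities are necessary but not sufficient for Koszulity (there is no purely numerical Koszulity criterion), so you cannot conclude that $A$ is Koszul from graded-dimension bookkeeping once $\Lambda$ and $B$ are known to be Koszul. Finally, the asymmetry you worry about is a red herring: since $\mathcal{A}(B,A^{\operatorname{op}})^{\operatorname{op}}\cong\mathcal{A}(A,B^{\operatorname{op}})$ and Koszulity is preserved under passing to the opposite algebra, the $A$-half of the converse would follow from the $B$-half applied to $\Lambda^{\operatorname{op}}$ — but that only helps once the $B$-half is actually proven, which at present it is not.
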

This motivates the question of whether or not there is some corresponding statement concerning the existence of linear resolutions of the standard modules $\Delta_\Lambda(i)$. 
\begin{definition} Let $\Lambda$ be a graded quasi-hereditary algebra. Then, $\Lambda$ is said to be
	\begin{enumerate}[label=$(\roman*)$]
		\item \emph{left standard Koszul} if the left standard modules $\Delta_\Lambda(i)\in \Lambda\operatorname{-mod}$ have linear resolutions, for all $i\in\{1,\dots, n\}$.
		\item \emph{right standard Koszul} if the right standard modules $\Delta_\Lambda^{\operatorname{op}}(i) \in \Lambda^{\operatorname{op}}\operatorname{-mod}$ have linear resolutions, for all $i\in \{1,\dots, n\}$.
	\end{enumerate}
\end{definition}
\begin{theorem}\label{theorem: lambda left standard koszul iff b koszul}
	The algebra $\Lambda$ is left standard Koszul if and only if $B$ is Koszul, and right standard Koszul if and only if $A$ is Koszul.
\end{theorem}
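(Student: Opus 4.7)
The plan is to reduce the statement to Lemma \ref{lemma:regular tensor simple iso standard & regular tensor projective iso projective,tensor preserves projective cover,lemma:tensor functors invertible on objects}(3), which carries the minimal projective resolution of $L_B(i)$ to that of $\Delta_\Lambda(i)$, combined with Lemma \ref{lemma:gen in degree preserved under tensoring}, which transfers the notion of generation in a fixed degree between indecomposable projectives on the two sides. Since $\Lambda$ is graded by path length and this grading extends the path-length grading on the subalgebra $B$, the functor $F=\Lambda\otimes_B\blank$ is a graded functor that respects grading shifts, so writing the $k$-th term of the minimal projective resolution of $L_B(i)$ as $P_B^k\cong \bigoplus_\ell P_B(\ell)^{\oplus m_{\ell,k}}$ with appropriate shifts, the $k$-th term of the induced resolution of $\Delta_\Lambda(i)$ is $F(P_B^k)\cong\bigoplus_\ell P_\Lambda(\ell)^{\oplus m_{\ell,k}}$ with the same shifts.

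For part (1), Lemma \ref{lemma:gen in degree preserved under tensoring} applied summand by summand then gives that $P_B^k$ is generated in degree $k$ if and only if $F(P_B^k)$ is. Uniqueness of the minimal projective resolution forces any linear projective resolution to be isomorphic to the minimal one, so $L_B(i)$ admits a linear resolution if and only if $\Delta_\Lambda(i)$ does, and taking this across all $i\in\{1,\dots,n\}$ proves (1).

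For part (2), I would pass to opposite algebras. A direct inspection of Definition \ref{def:dual extension algebra} shows that $\Lambda^{\operatorname{op}}\cong \mathcal{A}(A,B^{\operatorname{op}})$ as graded algebras, possibly after renumbering vertices to retain directedness as discussed in Section 2, because forming the opposite algebra reverses all arrows and thereby swaps the roles of the two factors in the dual extension. The right standard $\Lambda$-modules are by definition the left standard modules over $\Lambda^{\operatorname{op}}$, so applying part (1) to the pair $(A,B^{\operatorname{op}})$ yields that $\Lambda$ is right standard Koszul if and only if $A$ is Koszul.

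The main point requiring care is the verification that $F$ truly preserves grading shifts, so that linearity transfers cleanly between the resolutions of $L_B(i)$ and $\Delta_\Lambda(i)$. This rests on $\Lambda$ being generated in degree zero as a graded right $B$-module, which is immediate from Definition \ref{def:dual extension algebra} since every basis path $q'p$ factors uniquely as an $A^{\operatorname{op}}$-path followed by a $B$-path, with $p$ contributing its length on the $B$-side. The opposite-algebra identification used in (2) is routine but deserves a brief quiver-and-relations check.
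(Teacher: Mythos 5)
Your proof is correct and follows essentially the same route as the paper: part (1) is exactly the combination of Lemma \ref{lemma:gen in degree preserved under tensoring} with Lemma \ref{lemma:regular tensor simple iso standard & regular tensor projective iso projective,tensor preserves projective cover,lemma:tensor functors invertible on objects} (plus the standard uniqueness of minimal graded resolutions), and part (2) is the paper's passage to $\Lambda^{\operatorname{op}}=\mathcal{A}(B,A^{\operatorname{op}})^{\operatorname{op}}=\mathcal{A}(A,B^{\operatorname{op}})$ followed by an application of part (1). One minor remark: your side claim that $\Lambda$ is generated in degree zero as a graded right $B$-module is not true for the path-length grading (the arrows of $A^{\operatorname{op}}$ have degree $1$), but it is also not needed, since shift-compatibility of $F=\Lambda\otimes_B\blank$ already follows from the tensor-product grading together with the fact that the idempotents $e_i$ are homogeneous of degree $0$, which is precisely the content of Lemma \ref{lemma:gen in degree preserved under tensoring}.
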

\begin{proof}
	By Lemma \ref{lemma:regular tensor simple iso standard & regular tensor projective iso projective,tensor preserves projective cover,lemma:tensor functors invertible on objects}, $P_B^\bullet \to L_B(i)$ is a minimal projective resolution if and only if $F(P_B^\bullet)=P_\Lambda^\bullet \to \Delta_\Lambda(i)$ is a minimal projective resolution. Since the degree of the idempotents $e_j$ is zero in both the grading on $B$ and the grading on $\Lambda$, the $k$th term of $P_B^\bullet$, the module $P_B^k$, is generated in degree $k$ if and only if the $k$th term of $P_\Lambda^\bullet$, the module $P_\Lambda^k$, is generated in degree $k$. Therefore, $L_B(i)$ has a linear resolution if and only if $\Delta_\Lambda(i)$ has a linear resolution, so $B$ is Koszul if and only if $\Lambda$ is left standard Koszul.
	
	For the second statement, note that $\Lambda$ is right standard Koszul if and only if $\Lambda^{\operatorname{op}}$ is left standard Koszul. But, $\Lambda^{\operatorname{op}}={\mathcal{A}(B,A^{\operatorname{op}})}^{\operatorname{op}}=\mathcal{A}(A, B^{\operatorname{op}})$, which is left standard Koszul if and only if $A$ is Koszul, by the first part of the proof.
\end{proof}
Let $C$ be some Koszul algebra and put $\mathbb{L}=L_C(1)\oplus\dots\oplus L_C(n)$. Then, it is well-known that the condition of $\mathbb{L}$ having a linear resolution is equivalent to the internal and homological gradings on $\operatorname{Ext}^\ast_C(\mathbb{L},\mathbb{L})$ coinciding. This phenomenon does not generalize to the case of standard koszulity with the current grading on the dual extension algebra.
\begin{example}
Let $Q$ be the quiver $\xymatrix@=0.3cm{1\ar[r]^-\alpha&2}$, put $B=KQ$, and consider $\Lambda=\mathcal{A}(B,B^{\operatorname{op}})$. Then $\Lambda$ is given by the quiver
\begin{align*}\xymatrix@=0.3cm{
		1\ar@/^0.5pc/[r]^-{\alpha} & 2 \ar@/^0.5pc/[l]^-{\alpha^\prime}}\end{align*}
with the relation $\alpha\alpha^\prime=0$. The standard modules are $$\Delta_\Lambda(1)\cong L_\Lambda(1)\quad\textrm{and}\quad 
\Delta_\Lambda(2):\vcenter{\xymatrix@=0.3cm{2 \ar[d]^-{\alpha^\prime} \\ 1}}.$$ Putting $\Delta=\Delta_\Lambda(1)\oplus \Delta_\Lambda(2)$, the space $\operatorname{Hom}_\Lambda(\Delta,\Delta)$ contains the inclusion $\Delta_\Lambda(1)\hookrightarrow \Delta_\Lambda(2)$, which is of homological degree 0 but internal degree 1.
\end{example}
\subsection{Alternate grading on the dual extension algebra}
To remedy the situation in the previous example, we define a new grading on the dual extension algebra $\Lambda=\mathcal{A}(B,A^{\operatorname{op}})$. We still grade $B$ by path length, but put $\deg \alpha^\prime=0$ for all arrows $\alpha^\prime$ coming from the quiver of $A^{\operatorname{op}}$. Note that the idempotents $e_j$ are still homogeneous of degree zero. Since the proof of Theorem \ref{theorem: lambda left standard koszul iff b koszul} relies only on the degree of the idempotents being zero, its conclusion remains true under the new grading on $\Lambda$.

\begin{lemma}\label{lemma:homological and internal gradings on ext(delta,delta) coincide}
	Let $\Lambda=\mathcal{A}(B,A^{\operatorname{op}})$ be left standard Koszul and graded as above. Then, the homological and internal gradings on $\operatorname{Ext}_\Lambda^\ast(\Delta,\Delta)$ coincide.
\end{lemma}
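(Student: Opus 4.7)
The plan is to combine left standard Koszulity (which yields linear projective resolutions of the $\Delta_\Lambda(i)$) with Proposition~\ref{proposition:ext^k between Lambda-standards} (which says that the Hom complex computing $\operatorname{Ext}^\ast_\Lambda(\Delta,\Delta)$ has zero differential) and then read off the internal degree of every nonzero extension class directly from the graded support of the standards.

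First, I would establish the key observation about the new grading: the standard module $\Delta_\Lambda(j) \cong P_{A^{\operatorname{op}}}(j)$ is concentrated in internal degree $0$. Indeed, a basis of $\Delta_\Lambda(j)$ is given by paths $q^\prime$ in $A^{\operatorname{op}}$ starting at $j$, and by definition of the new grading every arrow in $Q_1^\prime$ is declared to have degree $0$, so each such path (hence every basis vector) sits in internal degree $0$.

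Next, left standard Koszulity provides for each $i$ a minimal projective resolution $P^\bullet \twoheadrightarrow \Delta_\Lambda(i)$ in which the term $P^k$ is generated in internal degree $k$. Applying Proposition~\ref{proposition:ext^k between Lambda-standards} to this particular resolution, the differential on the complex $\operatorname{Hom}_\Lambda(P^\bullet,\Delta_\Lambda(j))$ vanishes, and hence
\begin{align*}
\operatorname{Ext}_\Lambda^k(\Delta_\Lambda(i),\Delta_\Lambda(j)) \;\cong\; \operatorname{Hom}_\Lambda(P^k,\Delta_\Lambda(j))
\end{align*}
as ungraded vector spaces, so there is no cohomological cancellation to worry about.

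Finally, I would identify the internal degree of every element on the right-hand side. Since $P^k$ is generated in internal degree $k$ and $\Delta_\Lambda(j)$ is supported only in internal degree $0$, any $\Lambda$-module homomorphism $P^k \to \Delta_\Lambda(j)$ is automatically homogeneous, with its internal degree uniquely determined by the difference between the degrees of source generators and target support; with the standard sign convention on $\operatorname{Ext}$ this places the class in internal degree $k$. Consequently the homological degree $k$ part of $\operatorname{Ext}^\ast_\Lambda(\Delta,\Delta)$ sits entirely in internal degree $k$, proving that the two gradings coincide. There is no real obstacle in this argument: the whole point of introducing the alternate grading was precisely to put $\Delta_\Lambda(j)$ in a single internal degree, and once this is in place everything else is bookkeeping of degree shifts.
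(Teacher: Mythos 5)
Your argument is correct and is essentially the paper's own proof: both use the vanishing differential from Proposition~\ref{proposition:ext^k between Lambda-standards} to identify $\operatorname{Ext}^k_\Lambda(\Delta_\Lambda(i),\Delta_\Lambda(j))$ with $\operatorname{Hom}_\Lambda(P^k,\Delta_\Lambda(j))$, and then read off internal degree $k$ from the linearity of the resolution supplied by left standard Koszulity. The only cosmetic difference is that you justify the degree count by noting $\Delta_\Lambda(j)$ is concentrated in internal degree $0$, whereas the paper invokes Proposition~\ref{proposition:homs from projective to standard} to write the maps explicitly as $\rho_\gamma$ with $\gamma$ a combination of paths in $A^{\operatorname{op}}$, hence homogeneous of degree $0$ — the same observation in different clothing.
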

\begin{proof}
	Let $P_\Lambda^\bullet$ be a minimal projective resolution of $\Delta_\Lambda(i)$. In the proof of Proposition \ref{proposition:ext^k between Lambda-standards}, we saw that $\operatorname{Ext}^k_\Lambda(\Delta(i),\Delta(j))\cong \operatorname{Hom}_\Lambda(P_\Lambda^k,\Delta_\Lambda(j))$, where $P_\Lambda^k$ is the $k$th term of $P_\Lambda^\bullet.$ The space $\operatorname{Hom}_\Lambda(P_\Lambda^k,\Delta_\Lambda(j))$ is isomorphic to a direct sum of spaces of the form $\operatorname{Hom}_\Lambda(P_\Lambda(\ell),\Delta_\Lambda(j)).$ By Proposition \ref{proposition:homs from projective to standard}, there holds $$\operatorname{Hom}_\Lambda(P_\Lambda(\ell),\Delta_\Lambda(j))\cong \operatorname{Hom}_{A^{\operatorname{op}}}(P_{A^{\operatorname{op}}}(\ell),P_{A^{\operatorname{op}}}(j)).$$
	
	\begin{enumerate}[label=$(\roman*)$]
		\item If $\ell=j$, the space $\operatorname{Hom}_{A^{\operatorname{op}}}(P_{A^{\operatorname{op}}}(\ell),P_{A^{\operatorname{op}}}(\ell))$ equals the span of the identity homomorphism on $P_{A^{\operatorname{op}}}(\ell)$.
		\item If $\ell\neq j$, any homomorphism $\varphi\in \operatorname{Hom}_{A^{\operatorname{op}}}(P_{A^{\operatorname{op}}}(\ell),P_{A^{\operatorname{op}}}(j))$ is of the form $\rho_{q^\prime}$, where $q^\prime$ is a linear combination of paths in $A^{\operatorname{op}}$.
	\end{enumerate}
In either case, such a map is homogeneous of degree 0, but when accounting for present degree shifts, we see that the internal degree is $k$.
\end{proof}
\section{$A_\infty$-structure on $\operatorname{Ext}_\Lambda^\ast(\Delta,\Delta)$}
\subsection{Background and the Koszul case}
The goal of this section is to describe completely an $A_\infty$-structure on $\operatorname{Ext}_\Lambda^\ast(\Delta,\Delta)$, given a certain $A_\infty$-structure on $\operatorname{Ext}_B^\ast(\mathbb{L},\mathbb{L})$. We describe the construction given in \cite{LuPalmieriWuZhang}, which is a special case of the construction in \cite{Merkulov}, which in turn is a special case of Kadeishvili's original construction in \cite{Kadeishvili}.

Let $M$ be a $\Lambda$-module with minimal projective resolution $P^\bullet\to M$. Form the algebra $\mathcal{D}\coloneqq\operatorname{End}_\Lambda (P^\bullet, P^\bullet)$. Note that the elements of $\mathcal{D}$ are not necessarily chain maps.

We may endow $\mathcal{D}$ with the structure of a dg-algebra by defining a differential on homogeneous maps by
\begin{align*}\partial(f)\coloneqq d^P f - (-1)^{|f|} fd^P,\end{align*}
where $d^P$ denotes the differential on $P^\bullet$, and $|f|$ denotes the (homological) degree of $f$. We have the following easy observation.

\begin{lemma}\label{lemma:merkulov's construction}
Let $f\in \mathcal{D}$ be homogeneous of degree 0. Then
\begin{enumerate}[label=$(\roman*)$]
	\item We have $\partial(f)=0$ if and only if $f$ is a chain map, and
	\item $f\in \operatorname{im}\partial$ if and only if $f$ is null-homotopic.
\end{enumerate}
\end{lemma}
By Lemma \ref{lemma:merkulov's construction}, the zeroth homology $H^0(\mathcal{D})$ describes precisely chain maps modulo homotopy, which implies that
$$H^0(\mathcal{D})= \operatorname{End}_{\mathcal{K}(\Lambda)}(P^\bullet)\cong \operatorname{Ext}_\Lambda^0(M,M)=\operatorname{End}_\Lambda(M)$$
as graded vector spaces. Considering instead the $k$th homology, $H^k(\mathcal{D})$, we obtain $H^k(\mathcal{D})\cong \operatorname{Ext}^k_\Lambda(M,M)$, and, consequently, $H^\ast(\mathcal{D})\cong \operatorname{Ext}^\ast_\Lambda(M,M)$.
We may decompose $\mathcal{D}$ as $\mathcal{D}=Z\oplus L$, where $Z$ is the graded subspace spanned by cycles and $L$ is some complement. Then, $Z$ decomposes further as $Z=H\oplus \operatorname{im}\partial$ where $\operatorname{im}\partial$ are the boundary maps and $H$ is some complement. Now, applying the first isomorphism theorem, we have
\begin{align*}\operatorname{Ext}_\Lambda^\ast(M,M)\cong H^\ast(\mathcal{D}) =\faktor{Z}{\operatorname{im}\partial}\cong H\end{align*}

and, from this, it follows that we have an injection $i:\operatorname{Ext}_\Lambda^\ast(M,M) \to H\subset \mathcal{D}$, such that the image of $\operatorname{Ext}_\Lambda^\ast(M,M)$ is isomorphic to a subspace of $\mathcal{D}$, which in turn is isomorphic to the homology. All this amounts to a decomposition $\mathcal{D}\cong \operatorname{im}\partial\oplus H \oplus L,$
where $H\cong \operatorname{Ext}_\Lambda^\ast(M,M)$.

Let $p:\mathcal{D}\to \operatorname{Ext}_\Lambda^\ast(M,M)$ be the composition given by the natural projection $p^\prime: \mathcal{D}\to H$ followed by the map $i^{-1}:H\to \operatorname{Ext}_\Lambda^\ast(M,M)$. Isolating degree parts, we have the following.
\begin{align*}\xymatrix@=0.3cm{
	\operatorname{im}\partial_{k-1}\oplus H_k \oplus L_k \ar[r]^-{\partial_k} &\operatorname{im}\partial_{k}\oplus H_{k+1} \oplus L_{k+1} \ar[r]^-{\partial_{k+1}} & \operatorname{im}\partial_{k+1} \oplus H_{k+2} \oplus L_{k+2}
}\end{align*}
Since $\operatorname{im}\partial_{k-1}$ and $H_k$ consist of cycles and $L_k$ is chosen to be a complement, we see that the differential $\partial$ maps
$\operatorname{im}\partial_{k-1}\oplus H_k\oplus L_k$ onto $\operatorname{im}\partial_{k}$ with kernel $\operatorname{im}\partial_{k-1}\oplus H_k$, so that $L_k\cong \operatorname{im}\partial_{k}$ via $\partial.$ This allows the definition of a map $h\in \mathcal{D}$ as below.
\begin{align*}\restr{h}{L_k\oplus H_k}=0,\quad \textrm{and} \quad \restr{h}{\operatorname{im}\partial_{k-1}}=\partial_{k-1}^{-1}.\end{align*}
Next, construct inductively a sequence  of linear maps $\lambda_n:\mathcal{D}^{\otimes n} \to \mathcal{D}$, such that $\deg \lambda_n=2-n$, as follows.

\begin{align*}h\lambda_1\coloneqq -\operatorname{id}_{\mathcal{D}},\quad \lambda_2\coloneqq \textrm{composition},\quad \lambda_n\coloneqq\sum_{r+s=n} (-1)^{s+1}\lambda_2(h\lambda_r\otimes h\lambda_s),\quad s\geq 1.\end{align*}

\begin{theorem}\label{theorem:merkulov model}\cite{LuPalmieriWuZhang,Merkulov}
Put $m_n \coloneqq p\lambda_n i^{\otimes n}.$ Then $(\operatorname{Ext}_\Lambda^\ast(M,M), m_2, m_3,\dots )$ is an $A_\infty$-algebra. Moreover, this structure is unique up to $A_\infty$-isomorphism.
\end{theorem}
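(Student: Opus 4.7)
The plan is to recognize the statement as an instance of Kadeishvili's transfer-of-structure theorem in the explicit form due to Merkulov. The key observation is that the triple $(i, p, h)$ together with the decomposition $\mathcal{D} = B \oplus H \oplus L$ furnishes a strong deformation retract of $\mathcal{D}$ onto its cohomology, and once the standard contraction identities are in place the $A_\infty$-relations for $m_n = p\lambda_n i^{\otimes n}$ can be verified by induction on $n$.

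First I would verify the following contraction identities by case analysis on the direct summands $B$, $H$, $L$:
\begin{align*}
pi = \operatorname{id}_{\operatorname{Ext}^\ast_\Lambda(M,M)}, \quad hi = 0, \quad ph = 0, \quad h^2 = 0, \quad \partial h + h\partial = \operatorname{id}_{\mathcal{D}} - ip.
\end{align*}
Each of these is immediate from the fact that $\partial$ vanishes on $B \oplus H$ and restricts to an isomorphism $L_i \xrightarrow{\sim} B_{i+1}$, while $h$ is defined to be the inverse of this isomorphism on $B$ and zero on $H \oplus L$. I would also record the auxiliary identities $p\partial = 0$ and $\partial i = 0$, which will be used repeatedly to annihilate unwanted boundary terms after composing with $p$ on the left and $i^{\otimes n}$ on the right.

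The heart of the argument is an induction on $n$ showing that the operators $\lambda_n$ satisfy, modulo terms killed by $p \cdot (-) \cdot i^{\otimes n}$, a lifted version of the Stasheff relation inside $\mathcal{D}$. The base case $n=2$ reduces to associativity of $\lambda_2$, which is genuine composition in $\mathcal{D}$. For the inductive step I would expand $\lambda_n$ using its recursive definition, replace each occurrence of $\partial$ acting on an inner $h\lambda_s$ factor using the homotopy identity $\partial h = \operatorname{id}_{\mathcal{D}} - ip - h\partial$, and regroup the resulting double sum with the inductive hypothesis together with the formal convention $h\lambda_1 = -\operatorname{id}_{\mathcal{D}}$. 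Composing on the left with $p$ and on the right with $i^{\otimes n}$ then yields precisely the $A_\infty$-axiom for the transferred operations $m_n$. Uniqueness up to $A_\infty$-isomorphism is a separate application of Kadeishvili's uniqueness theorem for minimal models: any two transferred structures arising from different choices of the complement $L$ are quasi-isomorphic as $A_\infty$-algebras, and since both are minimal the quasi-isomorphism is automatically an isomorphism.

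The main obstacle is purely combinatorial: the signs $(-1)^{s+1}$ in Merkulov's recursion must match exactly with the Koszul signs $(-1)^{rs+t}$ in the Stasheff identities, and keeping track of these signs through the inductive expansion of nested $h\lambda_r \otimes h\lambda_s$ terms is the only delicate part of the argument. All other steps are formal manipulations with the contraction data, and in fact this is why the author can reasonably cite \cite{LuPalmieriWuZhang, Merkulov} rather than reproduce the proof in full.
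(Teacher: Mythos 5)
Your outline is essentially the standard proof of the Merkulov/Kadeishvili transfer theorem, and it is sound: the contraction identities you list do follow from the decomposition $\mathcal{D}=B\oplus H\oplus L$ and the definition of $h$, the inductive verification of the Stasheff identities for $m_n=p\lambda_n i^{\otimes n}$ is exactly how the result is established in \cite{Merkulov,LuPalmieriWuZhang}, and uniqueness up to $A_\infty$-isomorphism is indeed Kadeishvili's minimal-model uniqueness. Note, however, that the paper itself gives no proof of this theorem --- it is imported verbatim from the cited references --- so there is nothing internal to compare against; your sketch simply reproduces (correctly, though with the sign bookkeeping in the inductive step left implicit) the argument of the sources the author cites.
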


When $M=\mathbb{L}$, there is the following nice characterization of when this process yields a trivial $A_\infty$-structure.
\begin{proposition}\label{proposition:ext-algebra of simples has trivial A-inf structure if koszul}\cite[Proposition~1]{Keller1}
	The $A_\infty$-algebra $\operatorname{Ext}_B^\ast(\mathbb{L},\mathbb{L})$ is $A_\infty$-isomorphic to an algebra with $m_n=0$ for $n\geq 3$ if and only if $B$ is Koszul.
\end{proposition}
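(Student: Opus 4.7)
The plan is to track two compatible gradings on $\operatorname{Ext}_B^\ast(\mathbb{L},\mathbb{L})$ throughout Merkulov's construction. Since $B$ inherits a path-length grading from $KQ$, the simple module $\mathbb{L}$ admits a minimal graded projective resolution $P_B^\bullet \to \mathbb{L}$ whose differentials are homogeneous of internal degree $0$. Running the construction inside the category of bigraded vector spaces shows that the projection $p$, the inclusion $i$, the homotopy $h$ and every $\lambda_n$ are homogeneous of internal degree $0$; hence each $m_n = p\lambda_n i^{\otimes n}$ has bidegree $(2-n,0)$ with respect to (homological, internal) grading.

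For the forward implication, suppose $B$ is Koszul. The simples analogue of Lemma~\ref{lemma:homological and internal gradings on ext(delta,delta) coincide} (proved in the same way, using that every $L_B(i)$ has a linear resolution) guarantees that the homological and internal gradings on $\operatorname{Ext}_B^\ast(\mathbb{L},\mathbb{L})$ coincide. For homogeneous $\varepsilon_1,\dots,\varepsilon_n$ of homological degrees $k_1,\dots,k_n$, the element $m_n(\varepsilon_n,\dots,\varepsilon_1)$ is then forced to live in homological degree $\sum k_i + 2 - n$ but internal degree $\sum k_i$, and these two degrees can agree only when $n=2$. Hence $m_n = 0$ for all $n\geq 3$ already in Merkulov's model, which exhibits $\operatorname{Ext}_B^\ast(\mathbb{L},\mathbb{L})$ as formal.

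For the converse, assume $\operatorname{Ext}_B^\ast(\mathbb{L},\mathbb{L})$ is $A_\infty$-isomorphic to an $A_\infty$-algebra with $m_n=0$ for $n\geq 3$, i.e.\ to an ordinary associative graded algebra. The $A_\infty$ Koszul duality machinery reconstructs $B$, up to quasi-isomorphism, from the pair $(\operatorname{Ext}_B^\ast(\mathbb{L},\mathbb{L}), m_2, m_3,\dots)$ via a generalized cobar construction; under the formality hypothesis this degenerates to the classical cobar construction on a quadratic graded algebra, producing a quadratic model for $B$. Since $B$ is basic and finite-dimensional with semisimple part $\bigoplus Ke_i$, comparing this model with the path-algebra presentation forces the defining ideal $I$ to be generated in degree $2$, which is equivalent to Koszulity of $B$.

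The delicate step is the converse: one must verify that the hypothesized $A_\infty$-isomorphism can be chosen to respect the internal (Adams) grading, so that the reconstruction above produces a genuinely quadratic presentation and the degree comparison with $B$ is not circular. This reduces to running the transfer of $A_\infty$-structures via homological perturbation entirely within the bigraded category, which is standard but is where all of the bookkeeping must be done carefully; the forward direction, by contrast, is essentially a bidegree count once the Merkulov data is recognized as internally homogeneous.
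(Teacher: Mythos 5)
First, note that the paper does not prove this statement at all: it is quoted verbatim from \cite{Keller1} (the text immediately afterwards says ``The above is the proposition as it is given in \cite{Keller1}''), so there is no internal proof to compare against. Your forward direction (Koszul $\Rightarrow$ $m_n=0$ for $n\geq 3$) is correct, and it is exactly the bidegree count the paper itself carries out for the $\Delta$-analogue in Proposition \ref{prop:A_inf structure on dual ext alg of koszul algs is trivial}: Merkulov's construction performed with graded splittings gives $m_n$ of internal degree $0$ and homological degree $2-n$, and Koszulity forces the two gradings on $\operatorname{Ext}_B^\ast(\mathbb{L},\mathbb{L})$ to coincide, so $n=2$.

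The converse, however, has a genuine gap, and it is precisely at the step you flag as ``delicate'' but then resolve incorrectly. Granting all the bigraded bookkeeping, the reconstruction of $B$ from $(\operatorname{Ext}_B^\ast(\mathbb{L},\mathbb{L}), m_2, m_3, \dots)$ reads off the defining relations of $B$ from the duals of the restrictions $m_n\colon (\operatorname{Ext}^1)^{\otimes n}\to \operatorname{Ext}^2$; if $m_n=0$ for $n\geq 3$ this shows only that the ideal $I$ is generated in degree $2$, i.e.\ that $B$ is \emph{quadratic}. Your final assertion that generation of $I$ in degree $2$ ``is equivalent to Koszulity of $B$'' is false: quadraticity is necessary but not sufficient, and there are standard examples of quadratic algebras that are not Koszul (already with three generators and three quadratic relations). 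So your argument, as written, proves ``formal $\Rightarrow$ quadratic'' and then silently upgrades this to Koszulity. A correct converse must extract more from formality than the degrees of the relations --- for instance, that $\operatorname{Ext}_B^\ast(\mathbb{L},\mathbb{L})$ is generated in cohomological degrees $0$ and $1$ (equivalently, is concentrated on the diagonal of the bigrading), which is the actual content of Keller's proposition and is not supplied by the cobar-degeneration argument you sketch.
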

Moreover, we crucially observe, that when the above construction is performed in the category of graded modules, the differential $\partial$ is of internal degree 0. This implies that also the map $h$ is of internal degree 0, which in turn implies that the maps $\lambda_n$ are of internal degree 0. This allows us to establish a sort of analogue of Proposition \ref{proposition:ext-algebra of simples has trivial A-inf structure if koszul} in the case where $M=\Delta$.

\begin{proposition}\label{prop:A_inf structure on dual ext alg of koszul algs is trivial}
	Let $B$ and $A$ be directed algebras with $B$ Koszul. Put $\Lambda=\mathcal{A}(B,A^{\operatorname{op}})$ and endow $\Lambda$ with the grading defined prior to Lemma \ref{lemma:homological and internal gradings on ext(delta,delta) coincide}. Let $\{m_n\}$ be the $A_\infty$-multiplications on $\operatorname{Ext}^\ast_\Lambda(\Delta,\Delta)$ constructed above. Then, we have $m_n=0$ for $n\neq 2$.
\end{proposition}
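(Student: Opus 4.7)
The plan is to prove the vanishing of $m_n$ for $n \neq 2$ purely by a bigrading comparison argument, exploiting the fact that Koszulity of $B$ forces the homological and internal gradings on $\operatorname{Ext}_\Lambda^\ast(\Delta,\Delta)$ to coincide.

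First, I would observe that since $B$ is Koszul, Theorem \ref{theorem: lambda left standard koszul iff b koszul}(1) shows that $\Lambda$ is left standard Koszul with respect to the alternate grading introduced before Lemma \ref{lemma:homological and internal gradings on ext(delta,delta) coincide}. Consequently, Lemma \ref{lemma:homological and internal gradings on ext(delta,delta) coincide} applies, so that the internal and homological gradings on $\operatorname{Ext}_\Lambda^\ast(\Delta,\Delta)$ coincide. Concretely, any homogeneous element $\varepsilon \in \operatorname{Ext}_\Lambda^k(\Delta,\Delta)$ has both homological and internal degree equal to $k$.

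Next, I would use the observation already recorded in the text immediately preceding the statement: since $\partial$ is of internal degree $0$, so is $h$, and hence (by the inductive formula defining them) each $\lambda_n$ is of internal degree $0$. Because the projection $p$ and inclusion $i$ are likewise of internal degree $0$, the resulting $A_\infty$-multiplications $m_n = p\lambda_n i^{\otimes n}$ all preserve internal degree. On the other hand, by construction each $m_n$ has homological degree $2-n$. Now I would fix homogeneous elements $\varepsilon_j \in \operatorname{Ext}_\Lambda^{k_j}(\Delta,\Delta)$ for $1\leq j\leq n$ and compute the bidegree of $m_n(\varepsilon_n,\dots,\varepsilon_1)$ two ways: the homological degree is $k_1+\cdots+k_n+2-n$, while the internal degree equals $k_1+\cdots+k_n$ (by additivity and internal-degree preservation). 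Since these two gradings must coincide on the image by Lemma \ref{lemma:homological and internal gradings on ext(delta,delta) coincide}, we obtain
\begin{align*}
k_1+\cdots+k_n+2-n = k_1+\cdots+k_n,
\end{align*}
which forces $n=2$. Thus whenever $n\neq 2$, the element $m_n(\varepsilon_n,\dots,\varepsilon_1)$ lies in a graded component that is necessarily zero, proving $m_n=0$.

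The only thing that requires care is the claim that $m_n$ preserves internal degree; this is the crux of the argument, but it is already spelled out in the paragraph before the proposition (traced back to $\partial$, $h$, and then $\lambda_n$ all having internal degree $0$). Once that is accepted, the proof reduces to the short bidegree comparison above, so there is essentially no technical obstacle beyond invoking the preceding lemmas cleanly.
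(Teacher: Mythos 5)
Your proposal is correct and follows essentially the same argument as the paper: left standard Koszulity of $\Lambda$ gives the coincidence of homological and internal gradings on $\operatorname{Ext}_\Lambda^\ast(\Delta,\Delta)$ via Lemma \ref{lemma:homological and internal gradings on ext(delta,delta) coincide}, and since each $m_n$ has internal degree $0$ but homological degree $2-n$, the bidegree comparison forces $m_n=0$ for $n\neq 2$. Your explicit evaluation on homogeneous elements and your citation of Theorem \ref{theorem: lambda left standard koszul iff b koszul} rather than Theorem \ref{theorem:Lambda koszul iff A and B koszul} are only cosmetic differences from the paper's proof.
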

\begin{proof}
	By Theorem \ref{theorem:Lambda koszul iff A and B koszul}, $\Lambda$ is left standard Koszul. Since the maps $m_n$ constitute an $A_\infty$-structure, each $m_n$ is of homological degree $2-n$. Moreover, we saw that each $m_n$ is of internal degree 0. By Lemma \ref{lemma:homological and internal gradings on ext(delta,delta) coincide}, the two gradings coincide, so that $2-n=0.$
\end{proof}
\subsection{The general case}
Having dealt with the Koszul case, we return to the general case again. The construction used to obtain an $A_\infty$-structure on $\operatorname{Ext}_\Lambda^\ast(M,M)$ is not canonical, in the sense that there are several choices involved. Since our aim is to describe the $A_\infty$-structure on $\operatorname{Ext}_B^\ast(\Delta,\Delta)$ in terms of the $A_\infty$-structure on $\operatorname{Ext}_B^\ast(\mathbb{L},\mathbb{L})$, we start by investigating how the choices made when obtaining the $A_\infty$-structure on $\operatorname{Ext}_\Lambda^\ast(\Delta,\Delta)$ may be made compatible with those made to obtain the $A_\infty$-structure on $\operatorname{Ext}_B^\ast(\mathbb{L},\mathbb{L})$. Consider the following setup. Recall that, here, $F$ denotes the functor $\Lambda \otimes_B \blank : B\operatorname{-mod}\to \Lambda\operatorname{-mod}$. Let $P_B^\bullet\to \mathbb{L}$ be a minimal projective resolution, so that $F(P_B^\bullet)\to \Delta$ is a minimal projective resolution, too, by Lemma \ref{lemma:regular tensor simple iso standard & regular tensor projective iso projective,tensor preserves projective cover,lemma:tensor functors invertible on objects}. Let $d^P$ denote the differential on $P_B^\bullet$, so that $\operatorname{id}_\Lambda \otimes d^P$ is the differential on $F(P_B^\bullet)$. Then, we form the dg algebras $\mathcal{D}^B=\operatorname{End}_B(P_B^\bullet)$, and $\mathcal{D}^\Lambda=\operatorname{End}_\Lambda(F(P_B^\bullet))$. Let $\partial^B$ denote the differential on $\mathcal{D}^B$ and let $\partial^\Lambda$ denote the differential on $\mathcal{D}^\Lambda$.

Note that we have an injective homomorphism of algebras $\mathcal{D}^B\to \mathcal{D}^\Lambda$ given by $f\mapsto \operatorname{id}_\Lambda \otimes f$.
Here, we slightly abuse notation and identify $\Lambda\otimes_B L_B(i)\cong \Delta_B(i)$ and $\Lambda\otimes_B P_B(i)\cong P_\Lambda(i)$. We now perform Merkulov's construction to obtain decompositions of graded vector spaces
\begin{align*}\mathcal{D}^B= H^B\oplus \operatorname{im}\partial^B \oplus L^B,\quad \textrm{and}\quad \mathcal{D}^\Lambda=H^\Lambda \oplus \operatorname{im}\partial^\Lambda \oplus L^\Lambda,\end{align*}
where $H^B\cong \operatorname{Ext}_B^\ast(\mathbb{L},\mathbb{L})$, and $H^\Lambda\cong \operatorname{Ext}^\ast_\Lambda(\Delta,\Delta).$
Additionally, we obtain maps $i^B, i^\Lambda, p^B, p^\Lambda, h^B, h^\Lambda, \lambda_n^B$ and $\lambda_n^\Lambda$, so that the $A_\infty$-structures on $\operatorname{Ext}_B^\ast(\mathbb{L},\mathbb{L})$ and $\operatorname{Ext}_\Lambda^\ast(\Delta,\Delta)$ are given by
\begin{align*}m_n^B=p^B \lambda_n^B {(i^B)}^{\otimes n},\quad \textrm{and} \quad m_n^\Lambda =p^\Lambda \lambda_n^\Lambda {(i^\Lambda)}^{\otimes n},\end{align*}
respectively. We have the following useful observations.
\begin{lemma} \label{lemma:A-inf structure construction commutes with tensor functor}
	For any $f\in \mathcal{D}^B$, there holds
	\begin{enumerate}[label=$\roman*)$]
		\item $F ( \partial^B (f))=\partial^\Lambda( F (f))$.
		\item $\partial^B(f)=0$ if and only if $\partial^\Lambda( F (f))=0$.
		\item $f\in \operatorname{im}\partial^B$ if and only if $F(f)\in \operatorname{im}\partial^\Lambda$.
	\end{enumerate}
\end{lemma}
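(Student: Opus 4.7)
For (i), I plan to compute directly. Since $F(P^\bullet)\to \Delta$ is a minimal projective resolution (Lemma \ref{lemma:regular tensor simple iso standard & regular tensor projective iso projective,tensor preserves projective cover,lemma:tensor functors invertible on objects}(3)) whose differential is $d^{F(P)}=F(d^P)=\operatorname{id}_\Lambda \otimes d^P$, and since $F$ preserves composition of morphisms, a short calculation yields
\begin{align*}
\partial^\Lambda(F(f))=F(d^P)F(f)-(-1)^{|f|}F(f)F(d^P)=F\bigl(d^P f-(-1)^{|f|}f d^P\bigr)=F(\partial^B(f)).
\end{align*}
For (ii), one direction is immediate from (i). The converse relies on the faithfulness of $F$, which follows from $G\circ F\cong \operatorname{Id}_{B\operatorname{-mod}}$ in Lemma \ref{lemma:regular tensor simple iso standard & regular tensor projective iso projective,tensor preserves projective cover,lemma:tensor functors invertible on objects}(4): if $F(\partial^B(f))=0$, applying $G$ gives $\partial^B(f)=0$.

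Part (iii) is where the main difficulty lies. The forward direction is again immediate from (i). For the reverse direction, the plan is to exploit the defining relations of $\Lambda=\mathcal{A}(B,A^{\operatorname{op}})$ to construct a direct sum decomposition $\mathcal{D}^\Lambda=F(\mathcal{D}^B)\oplus \mathcal{D}^\Lambda_A$ that is preserved by $\partial^\Lambda$. Any component map $P_\Lambda(i)\to P_\Lambda(j)$ is right multiplication by an element of $e_i\Lambda e_j$, and by the basis description of $\Lambda$ this element is a linear combination of products $q'p$ with $p$ a path in $B$ and $q'$ a path in $A^{\operatorname{op}}$. The subspace $F(\mathcal{D}^B)$ consists exactly of those right multiplications whose $A^{\operatorname{op}}$-factor $q'$ is always trivial, while I denote by $\mathcal{D}^\Lambda_A$ the complementary subspace, spanned by right multiplications with nontrivial $q'$-factor.

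The key verification is that $\partial^\Lambda$ preserves this decomposition. The inclusion $\partial^\Lambda(F(\mathcal{D}^B))\subset F(\mathcal{D}^B)$ is part (i). For a generator $g=r_{q'p}\in \mathcal{D}^\Lambda_A$ with $q'$ nontrivial, and a component $r_\beta$ of $d^{F(P)}$ (so $\beta$ is a path in $B$), the relation $\alpha\beta'=0$ for $\alpha\in Q_1,\ \beta'\in Q_1'$ forces $\beta q'=0$ in $\Lambda$, so that $g\circ r_\beta=r_{\beta q' p}=0$; meanwhile $r_\beta\circ g=r_{q'p\beta}$ still carries the nontrivial factor $q'$ and remains in $\mathcal{D}^\Lambda_A$. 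Hence $\partial^\Lambda(g)\in \mathcal{D}^\Lambda_A$. Given this, if $F(f)=\partial^\Lambda(g)$ and we decompose $g=F(g_1)+g_2$ with $g_2\in \mathcal{D}^\Lambda_A$, then equating $F(\mathcal{D}^B)$-components yields $F(f)=F(\partial^B(g_1))$, and faithfulness of $F$ gives $f=\partial^B(g_1)\in \operatorname{im}\partial^B$. The principal obstacle is precisely the verification that $\partial^\Lambda$ respects the decomposition, which ultimately rests on the orthogonality between $B$-arrows and $A^{\operatorname{op}}$-arrows enforced by the dual-extension relations.
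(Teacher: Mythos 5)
Your parts $i)$ and $ii)$ coincide with the paper's argument: the same direct computation using $F(d^P)=\operatorname{id}_\Lambda\otimes d^P$, and the same use of $G\circ F\cong\operatorname{Id}_{B\operatorname{-mod}}$ for the converse of $ii)$. For $iii)$ the forward direction again agrees, but your converse takes a genuinely different route. The paper argues purely functorially: from $\partial^\Lambda(g)=F(f)$ it applies $G$ and conjugates by the natural isomorphism $\psi\colon G\circ F\Rightarrow \operatorname{Id}_{B\operatorname{-mod}}$ to produce an explicit preimage, $\partial^B\left(\psi G(g)\psi^{-1}\right)=f$, without ever invoking the presentation of $\Lambda$. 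You instead use the quiver-theoretic structure of the dual extension: the basis $\{q^\prime p\}$ of $\Lambda$ gives the decomposition $\mathcal{D}^\Lambda=F(\mathcal{D}^B)\oplus\mathcal{D}^\Lambda_A$, and the relations $\alpha\beta^\prime=0$ show it is $\partial^\Lambda$-stable, since for a component $r_{q^\prime p}$ with $q^\prime$ nontrivial and a differential component $r_\beta$ one has $\beta q^\prime=0$ while $q^\prime p\beta$ keeps its nontrivial $A^{\operatorname{op}}$-factor; projecting onto the $F(\mathcal{D}^B)$-summand and using faithfulness of $F$ then finishes the argument. This is correct, provided you make explicit the two facts you use tacitly: that $\{q^\prime p\}$ is a basis of $\Lambda$ (so the sum really is direct; this is the basis already used in Proposition \ref{proposition:homs from projective to standard}), and that the components of $d^{F(P)}$ are right multiplications by combinations of positive-length paths in $B$, which holds because the resolution is minimal and induced by $F$. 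As for what each approach buys: the paper's proof is shorter and independent of the presentation of $\Lambda$, relying only on $G\circ F\cong\operatorname{Id}$; yours is more computational but establishes the stronger structural fact that $F(\mathcal{D}^B)$ admits a differential-stable complement in $\mathcal{D}^\Lambda$, which is close in spirit to, and would partially streamline, the decomposition subsequently built in Proposition \ref{proposition:A-inf structure constructions compatible}.
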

\begin{proof} Note that $\partial^\Lambda=F(\partial^B)=\operatorname{id}_\Lambda \otimes \partial^B$.
	\begin{enumerate}[label=$\roman*)$]
		\item We have \begin{align*}
		F\circ \partial^B(f)&=\operatorname{id}_\Lambda\otimes (d^Pf - (-1)^{|f|}fd^P)=(\operatorname{id}_\Lambda\otimes d^P)\left(\operatorname{id}_\Lambda \otimes f\right) - (-1)^{|\operatorname{id}_\Lambda\otimes f|}(\operatorname{id}_\Lambda\otimes f)(\operatorname{id}_\Lambda\otimes d^P)\\
		&=\partial^\Lambda ( F(f)),
		\end{align*}
		since $|f|=|\operatorname{id}_\Lambda\otimes f|$.
		\item Suppose $\partial^B(f)=0.$ Then, $\partial^\Lambda ( F(f))=F(\partial^B(f))=F(0)=0$,
		by $i)$.
		Suppose instead that $\partial^\Lambda( F(f))=0$. By $i)$, we have $0=\partial^\Lambda ( F(f))= F( \partial^B(f))$. Applying the functor $G$, we have $0=G \circ F (\partial^B(f))$. By Lemma \ref{lemma:regular tensor simple iso standard & regular tensor projective iso projective,tensor preserves projective cover,lemma:tensor functors invertible on objects}, $G\circ F$ is naturally isomorphic to $\operatorname{Id}_{B\operatorname{-mod}}$, implying there is a commutative diagram
	\begin{align*}\xymatrix@C=1.5cm{
			G\circ F (P^\bullet) \ar[r]^-{G\circ F (\partial^B (f))} \ar[d]_-{\psi} & G\circ F (P^\bullet) \ar[d]^-{\psi} \\
			P^\bullet \ar[r]_-{\partial^B(f)} & P^\bullet 
		}\end{align*}
		and since $\psi$ is an isomorphism, this implies $\partial^B(f)=0$.
		\item Suppose $f\in \operatorname{im}\partial^B$, so that $\partial^B(g)=f$ for some $g$. We claim that $\partial^\Lambda(F(g))=F(f)$. Indeed,
		\begin{align*}
		\partial^\Lambda\left( F(g) \right)&=(\operatorname{id}_\Lambda\otimes d^P)(\operatorname{id}_\Lambda \otimes g) - (-1)^{|\operatorname{id}_\Lambda\otimes g|}(\operatorname{id}_\Lambda \otimes g)(\operatorname{id}\otimes d^P)=\operatorname{id}_\Lambda\otimes d^Pg -(-1)^{|g|}(\operatorname{id}_\Lambda\otimes gd^P)\\
		&=\operatorname{id}_\Lambda\otimes (d^P g- (-1)^{|g|}gd^P)=F( \partial^B(g))=F(f).
		\end{align*} Suppose, instead, that $\partial^\Lambda(g)=F(f)$. For any $\alpha \in \mathcal{D}^B$, we have a commutative diagram
		\begin{align*}\xymatrixcolsep{1.5cm}\xymatrix{
			G\circ F (P^\bullet) \ar[r]^-{G\circ F (\alpha)} \ar[d]_-{\psi} & G\circ F (P^\bullet) \ar[d]^-{\psi} \\
			P^\bullet \ar[r]_-{\alpha} & P^\bullet 
		}\end{align*}
		according to Lemma \ref{lemma:regular tensor simple iso standard & regular tensor projective iso projective,tensor preserves projective cover,lemma:tensor functors invertible on objects}, so that $G\circ F(\alpha)=\psi^{-1}\alpha \psi$.
		We claim that $\partial^B( \psi G(g) \psi^{-1})=f$. Applying $G$ to $F(f)$, we get
		\begin{align*}
		GF(f)&=G(\partial^\Lambda (g))=G\left( (\operatorname{id}_\Lambda\otimes d^P) g - (-1)^{|g|} g (\operatorname{id}_\Lambda\otimes d^P)\right)=\\
		&=G(\operatorname{id}_\Lambda \otimes d^P) G(g) - (-1)^{|g|} G(g) G(\operatorname{id}_\Lambda \otimes d^P)=G F(d^P)G(g) - (-1)^{|g|} G(g) GF(d^P)\\
		&=(\psi^{-1} d^P \psi) G(g) - (-1)^{|g|} G(g) ( \psi^{-1} d^P \psi).
		\end{align*}
		Now, we use that $GF(f)=\psi^{-1}f\psi$ and compose with $\psi$ on the left and with $\psi^{-1}$ on the right, to get
		\begin{align*}
		f=\psi GF(f)\psi^{-1}=d^P \left(\psi G(g) \psi^{-1}\right) - (-1)^{|g|} \left(\psi G(g) \psi^{-1}\right) d^P&=\partial^B\left(\psi G(g) \psi^{-1}\right). \hfill \qedhere
		\end{align*}
	\end{enumerate}
\end{proof}
In what follows, let $\hat{\operatorname{rad}}(\Delta,\Delta)$ denote the space of chain map representatives of homomorphisms, contained in $\operatorname{rad}(\Delta,\Delta)$, which are of the form discussed prior to Proposition \ref{proposition:m_2 in ext-algebra}.

\begin{lemma}\label{lemma:form of hom composed with chain map}
 Let $\varepsilon\in \mathcal{D}^\Lambda$ be homogeneous map of degree $n$. Then, for any $f^\prime \in \hat{\operatorname{rad}}(\Delta,\Delta)$, the composition $f^\prime \circ \varepsilon$ has, at most, one non-zero component, and is given by a matrix, whose entries are maps of the form $\rho_{q^\prime}$, where $q^\prime$ is some linear combination of paths in $A^{\operatorname{op}}$.
\end{lemma}
\begin{proof}
We draw the composition $f^\prime \circ \varepsilon$. Recall that the chain map $f^\prime$ may be chosen to be of the below form, according to the discussion prior to Proposition \ref{proposition:m_2 in ext-algebra}.
$$\xymatrix{
\dots \ar[r] &P_{n+1} \ar[d]^-{\varepsilon_{n-1}} \ar[r] & P_n \ar[d]^-{\varepsilon_n} \ar[r] &\dots \\
\dots \ar[r]& P_1 \ar[r] \ar[d]^-0 & P_0 \ar[d]^-{\tilde{f}} \ar[r] & 0 \\
\dots \ar[r]& P_1 \ar[r] & P_0 \ar[r] & 0
}$$
From this picture it is clear that $f^\prime\circ \varepsilon$ has at most one non-zero component, namely, the homomorphism $\tilde{f}\varepsilon_n:P_n\to P_0$. Fix a decomposition of $P_n$ and $P_0$ into indecomposable projective modules. We already know that the matrix of $\tilde{f}$, with respect to the above decomposition, has entries of the form $\rho_{q^\prime}$, where $q^\prime$ is a linear combination of paths in $A^{\operatorname{op}}$. The entries of the matrix of $\varepsilon_n$, with respect to the above decomposition, are homomorphisms between indecomposable projective $\Lambda$-modules. There are three cases.

\begin{enumerate}[label=$(\roman*)$]
	\item The matrix of $\varepsilon_n$ has an entry $\hat{\varepsilon}_n:P_\Lambda(x)\to P_\Lambda(x)$, for some vertex $x$. Then, $\hat{\varepsilon}_n=a1_{P_\Lambda(x)} + b\rho_{v}$, where $v$ is some linear combination of paths in $\Lambda$ and $a,b\in K$ are some scalars. Note that the paths occuring in $v$ are of the form $w^\prime u$, with both $u$ and $w^\prime$ being non-trivial paths, in $A^{\operatorname{op}}$ and $B$, respectively. The resulting entry of the matrix of the composition, $f^\prime\circ \varepsilon$, is  $$\rho_{q^\prime}(a1_{P_\Lambda(x)}+b\rho_{v})=a\rho_{q^\prime} + b\rho_{v q^\prime}=a\rho_{q^\prime},$$ since $vq^\prime=0$, because $w^\prime u q^\prime=0$, according to the dual extension relation.
	\item The matrix of $\varepsilon_n$ has an entry $\hat{\varepsilon}_n: P_\Lambda(x)\to P_\Lambda(y)$, with $x>y$. Then, $\hat{\varepsilon}_n=\rho_{v}$, where $v$ is a linear combination of paths from $y$ to $x$. Again, the paths occuring in $v$ are of the form $w^\prime u$, where $w^\prime$ and $u$ are as in the previous case. The resulting entry of the matrix of the composition, $f^\prime\circ\varepsilon$, is
	$$\rho_{q^\prime}\rho_{v}=\rho_{v q^\prime}=0,$$
	since $vq^\prime=0$, because $w^\prime u q^\prime=0$, according to the dual extension relation.
	\item The matrix of $\varepsilon_n$ has an entry $\hat{\varepsilon}_n: P_\Lambda(x)\to P_\Lambda(y)$, wth $x<y$. Then, $\hat{\varepsilon}_n=\rho_{u^\prime}$, where $u^\prime$ is some linear combination of paths, from $y$ to $x$, in $A^{\operatorname{op}}$. Note that the paths constituting $u^\prime$ go in decreasing direction, and are therefore paths in $A^{\operatorname{op}}$. The resulting entry of the matrix of the composition, $f^\prime\circ \varepsilon$, is $\rho_{q^\prime}\rho_{v^\prime}=\rho_{v^\prime q^\prime}$.\hfill \qedhere
\end{enumerate}
\end{proof}
\begin{lemma}\label{lemma:radical hat composed with induced part isomorphic to}
	There is an isomorphism of vector spaces $\hat{\operatorname{rad}}(\Delta,\Delta)\circ F(H^B)\cong \operatorname{rad}(\Delta,\Delta)\circ F(\operatorname{Ext}_B^\ast(\mathbb{L},\mathbb{L}))$.
\end{lemma}
\begin{proof}
	Let $\chi^B:H^B\to \operatorname{Ext}_B^\ast(\mathbb{L},\mathbb{L})$ be the inverse of the isomorphism $i^B:\operatorname{Ext}_B^\ast(\mathbb{L},\mathbb{L})\to H^B$. Since
	$$F:H^B \to F(H^B)\quad\textrm{and}\quad F:\operatorname{Ext}_B^\ast(\mathbb{L},\mathbb{L})\to F(\operatorname{Ext}_B^\ast(\mathbb{L},\mathbb{L}))$$
	are linear isomorphisms, we have an isomorphism of vector spaces $F\chi^B F^{-1}: F(H^B)\to F(\operatorname{Ext}_B^\ast(\mathbb{L},\mathbb{L}))$.
	
	For a homomorphism $f^\prime\in \operatorname{rad}(\Delta,\Delta)$, let $\hat{f}$ denote its chain map representative in $\hat{\operatorname{rad}}(\Delta,\Delta)$. We claim that any two  maps in $\hat{\operatorname{rad}}(\Delta,\Delta)\circ F(H^B)$ are not homotopic. Indeed, assume that $\hat{f}\varepsilon$ and $\hat{g}\delta$ are homotopic. Consider the following picture.
	$$\xymatrix{
		P_{n+1} \ar[d]_-0\ar[r]^-{d_{n}} & P_n \ar[ld]_-{h_n}\ar[r]^-{d_{n-1}} \ar[d]^{\hat{f}\varepsilon_n-\hat{g}\delta_n} & P_{n-1} \ar@/^1pc/[ld]^<<<<<{h_{n-1}}\\
		P_1\ar[r]_{d_1}& P_0\ar[r] & 0
	}$$
	The entries of the matrix of the map $d_1h_n-h_{n-1}d_{n-1}$ are of the form $\rho_{q^\prime p}$, where $q^\prime p$ is some linear combination of paths in $\Lambda$. Since $\partial^\Lambda=\operatorname{id}_\Lambda \otimes \partial^B$, all paths in $q^\prime p$ contain non-trivial subpaths in $B$. At the same time, by Lemma \ref{lemma:form of hom composed with chain map}, the entries of the matrix of the map $\hat{f}\varepsilon_n-\hat{g}\delta_n$ are of the form $\rho_{v^\prime}$, where $v^\prime$ is a linear combination of paths in $A^{\operatorname{op}}$. Since any path containing a non-trivial subpath in $B$ is linearly independent from any path in $A^{\operatorname{op}}$, we cannot have $d_1h_n+h_{n-1}d_{n-1}=f\varepsilon_n-g\delta_n$ unless $\hat{f}\varepsilon_n-\hat{g}\delta_n=0$, which is equivalent to $\hat{f}\varepsilon=\hat{g}\delta$.
	
	Now, define a linear map $\varphi: \hat{\operatorname{rad}}(\Delta,\Delta)\circ F(H^B)\to \operatorname{rad}(\Delta,\Delta)\circ F(\operatorname{Ext}_B^\ast(\mathbb{L},\mathbb{L})$ by
	$$\hat{f}\circ F(\varepsilon)\mapsto f^\prime\circ F\chi^BF^{-1} F(\varepsilon)=f^\prime\circ F\chi^B(\varepsilon).$$
 It is immediately clear that $\varphi$ is surjective. Moreover, $\varphi$ is injective. To see this, we observe, that if we have $f^\prime\circ F\chi^B(\varepsilon)=g^\prime\circ F\chi^B(\delta)$, then, the chain maps $\hat{f}\circ F(\varepsilon)$ and $\hat{g}\circ F(\delta)$ both lift the extension $f^\prime\circ F\chi^B(\varepsilon)$ to the homotopy category. Consequently, they are homotopic. By the above argument, we must then have $\hat{f}\circ F(\varepsilon)=\hat{g}\circ F(\delta)$. 
\end{proof}
With these elementary properties established, we are ready to prove the following key proposition. The inspiration for this result is Theorem~3.21 in \cite{2021uniquenessKM}, which is similar, albeit dealing with a more general context.
\begin{proposition}\label{proposition:A-inf structure constructions compatible}
	Suppose we perform Merkulov's construction on $\mathcal{D}^B$, to obtain the decomposition $\mathcal{D}^B=H^B\oplus \operatorname{im} \partial^B \oplus L^B$. Then, we may perform Merkulov's construction on $\mathcal{D}^\Lambda$, to obtain a decomposition $\mathcal{D}^\Lambda=H^\Lambda \oplus \operatorname{im}\partial^\Lambda \oplus L^\Lambda$,
	such that
	$$H^\Lambda=F(H^B)\oplus (\hat{\operatorname{rad}}(\Delta,\Delta)\circ F(H^B)),\quad \operatorname{im}\partial^\Lambda=F(\operatorname{im}\partial^B) \oplus \partial^\Lambda(\hat{L})\quad \textrm{and}\quad L^\Lambda=F(L^B)\oplus \hat{L},$$
	where $\hat{\operatorname{rad}}(\Delta,\Delta)$ is the space consisting of chain map representatives of homomorphisms in $\operatorname{rad}(\Delta,\Delta)$, which are of the form described in the discussion prior to Proposition \ref{proposition:m_2 in ext-algebra}.
\end{proposition}
\begin{proof} Suppose we have $\mathcal{D}^B=Z^B\oplus L^B$, where $Z^B$ is the subspace spanned by cycles and $L^B$ is some complement. Let $Z^\Lambda\subset \mathcal{D}^\Lambda$ be the subspace spanned by cycles. Then, there holds $F(Z^B)\subset Z^\Lambda$, by Lemma \ref{lemma:A-inf structure construction commutes with tensor functor}. We claim that $F(L^B)\cap Z^\Lambda=\{0\}$. To see this, let $f\in F(L^B)\cap Z^\Lambda$. Since $f$ is in the image of $F$, we have $f=\operatorname{id}_\Lambda\otimes g$, for some $g\in \mathcal{D}^B$. Then $\partial^\Lambda(\operatorname{id}_\Lambda\otimes g)=0$, which implies $\partial^B(g)=0$, so that $g\in Z^B\cap L^B$, which, in turn, implies $g=0$. Then, we also have $f=0$. This implies that we have a decomposition 
	\begin{align*}\mathcal{D}^\Lambda=Z^\Lambda \oplus ( F(L^B) \oplus \hat{L}),\end{align*}
	for some $\hat{L}$. Next, we claim that $\operatorname{im}\partial^\Lambda=F(\operatorname{im}\partial^B ) \oplus \partial^\Lambda (\hat{L}).$ To see this, we note that $F(\operatorname{im}\partial^B)\cap \partial^\Lambda(\hat{L})=\{0\}.$
	Indeed, suppose $f\in F(\operatorname{im}\partial^B)\cap \partial^\Lambda(\hat{L})$. Then, we have $f=\operatorname{id}_\Lambda \otimes \partial^B(g)=\partial^\Lambda(\operatorname{id}_\Lambda\otimes g)$. If $f\neq 0$, we must have $\partial^B(g)\neq 0$. We may then write $g=g_1+g_2$ with $g_1\in L^B$ and $g_2\in Z^B$. At the same time, we have $f=\partial^\Lambda(\hat{g})$ for some $\hat{g} \in \hat{L}$. Now we have $f=\partial^\Lambda(\hat{g})=\partial^\Lambda(\operatorname{id}_\Lambda\otimes g_1)$
	with $\hat{g}, \operatorname{id}_\Lambda \otimes g_1 \in L^\Lambda$. Since $\partial^\Lambda$ restricts to an isomorphism on $L^\Lambda$, we get $\hat{g}=\operatorname{id}_\Lambda \otimes g_1=0$ since $F(L^B)\cap \hat{L}=\{0\}$, and, consequently, $f=0$. Then, we see that
	
	\begin{align*}\operatorname{im}\partial^\Lambda&=\partial^\Lambda( F(L^B) \oplus \hat{L})=\partial^\Lambda F(L^B) + \partial^\Lambda(\hat{L})=F(\partial^B(L^B))+\partial^\Lambda(\hat{L})=F(\operatorname{im}\partial^B) \oplus \partial^\Lambda(\hat{L}),
	\end{align*}
where, in the last equality, we use that the intersection $F(\operatorname{im}\partial^B ) \cap \partial^\Lambda(\hat{L})$ is zero, to conclude that the sum on the right is direct.
	This gives us a decomposition $\mathcal{D}^\Lambda=H^\Lambda \oplus ( F(\operatorname{im}\partial^B) \oplus \partial^\Lambda(\hat{L}) ) \oplus (F(L^B)\oplus \hat{L})$. Next, we claim that $F(H^B)\cap (F(\operatorname{im}\partial^B) \oplus \partial^\Lambda(\hat{L}))=\{0\}$. Let $\operatorname{id}_\Lambda\otimes f\in F(H^B)\cap(F(\operatorname{im}\partial^B) \oplus \partial^\Lambda(\hat{L}))$. Then, we must have $\operatorname{id}_\Lambda\otimes f=\operatorname{id}_\Lambda \otimes \partial^B(g) + \partial^\Lambda(\hat{g})$ for $f\in H^B, g\in L^B$ and $\hat{g}\in \hat{L}$. But, then, we get 
	\begin{align*} \operatorname{id}_\Lambda\otimes f&=\operatorname{id}_\Lambda \otimes \partial^B(g) + \partial^\Lambda(\hat{g})=\partial^\Lambda( \operatorname{id}_\Lambda\otimes g + \hat{g}),
	\end{align*}
	which implies that $f\in \operatorname{im}\partial^B$, by Lemma \ref{lemma:A-inf structure construction commutes with tensor functor}. This means that we must have $f=0$, since $f\in H^B\cap \operatorname{im}\partial^B$. This fact, in turn, implies that we may choose a complement $\hat{H}$ of $F(H^B)\oplus \operatorname{im}\partial^\Lambda$ in $Z^\Lambda$, giving us a decomposition
	\begin{align*}\mathcal{D}^\Lambda&=( F(H^B) \oplus \hat{H})\oplus (F(\operatorname{im}\partial^B) \oplus \partial^\Lambda(\hat{L})) \oplus (F(L^B)\oplus \hat{L}).\end{align*}
	
	Next, we want to show that it is possible to choose $\hat{H}=\hat{\operatorname{rad}}(\Delta,\Delta)\circ F(H^B)$. To this end, we claim, that for $f\in \hat{\operatorname{rad}}(\Delta,\Delta)\circ F(H^B)$, we have  $\partial^\Lambda(f)=0$. Consider the following picture.
	$$\xymatrix{
		P_{n+1} \ar[d]^-0\ar[r]^-{d_{n+1}} & P_n \ar[r]^-{d_{n}}  \ar[d]^-{\tilde{f}} & P_{n-1}\\
		P_1\ar[r]& P_0\ar[r] & 0
	}$$
Due to $f$ having only one non-zero component, the only possibly non-zero component of the map $\partial^\Lambda(f)$ is $\tilde{f} d_{n+1}$. Since the entries of the matrix of $\tilde{f}$ are right multiplications by paths in $A^{\operatorname{op}}$, and the entries of the matrix of $d_{n+1}$ are right multiplications by linear combinations of paths in $B$, the claim $\tilde{f}d_{n+1}=0$ now follows from the fact that $\rho_{q^\prime}\rho_p=\rho_{pq^\prime}=0$, where we use that $pq^\prime=0$, according to the dual extension relation.

Next, we check that $(\hat{\operatorname{rad}}(\Delta,\Delta) \circ F(H^B))\cap (F(H^B)\oplus F(\operatorname{im}\partial^B) \oplus \partial^\Lambda(\hat{L}))=\{0\}$. Let $f=f_1+f_2+f_3$ be contained in this intersection, where $f\in \hat{\operatorname{rad}}(\Delta,\Delta)\circ F(H^B), f_1\in F(H^B), f_2\in F(\operatorname{im}\partial^B)$, and $f_3\in \partial^\Lambda(\hat{L})$. Since $f\in \hat{\operatorname{rad}}(\Delta,\Delta)\circ F(H^B)$, we apply Lemma \ref{lemma:form of hom composed with chain map} to see that $f$ has at most one non-zero component. Therefore, we compare the maps $\tilde{f}=\tilde{f}_1+\tilde{f}_2+\tilde{f}_3:P_n\to P_0$. Decomposing $P_n$ and $P_0$ into indecomposable summands, the map $\tilde{f}=\tilde{f}_1+\tilde{f}_2+\tilde{f}_3$ is given by a matrix. Considering a non-zero entry, $\hat{f}$, of this matrix, we write $\hat{f}=\hat{f}_1+\hat{f}_2+\hat{f}_3$. Since $f_1\in F(H^B)$, we have $\hat{f}_1=\rho_{p_1}$ for some linear combination of paths, $p_1$, in $B$. Similarly, we have $\hat{f}_2=\rho_{p_2}$, for some linear combination of paths, $p_2$, in $B$. Since $\partial^\Lambda=\operatorname{id}_\Lambda \otimes \partial^B$, and $f_3\in \partial^\Lambda(\hat{L})$, we have $\hat{f}_3=\rho_{w}$, where $w$ is a path in $\Lambda$ containing a non-trivial subpath in $B$. But any path in $A^{\operatorname{op}}$ is linearly independent from $p_1, p_2$ and $w$, which implies $f=0$.

This shows that a complement $\hat{H}$ of $F(H^B)$, in the homology $H^\Lambda$, may be chosen so that the subspace $\hat{\operatorname{rad}}(\Delta,\Delta)\circ F(H^B)$ is contained in $\hat{H}$. It is now enough to show that the space $\hat{\operatorname{rad}}(\Delta,\Delta)\circ F(H^B)$ has the ``correct'' dimension.

By construction, the homology $H^\Lambda$ is isomorphic to $\operatorname{Ext}_\Lambda^\ast(\Delta,\Delta)$. In the proof of Theorem \ref{theorem:ext-algebra of standards isomorphic to dual extension of ext-algebra of simples and B}, we saw that $\operatorname{Ext}_\Lambda^\ast(\Delta,\Delta)$ is generated by the extensions $F(\operatorname{Ext}_B^\ast(\mathbb{L}, \mathbb{L}))$ together with the homomorphisms $\operatorname{rad}(\Delta,\Delta)$. If $\mathcal{B}$ and $\mathcal{B}^\prime$ are bases of $\operatorname{Ext}_B^\ast(\mathbb{L},\mathbb{L})$ and of $\operatorname{rad}(\Delta,\Delta)$, respectively, then, the non-zero elements of the set $\{\varepsilon, f\circ \varepsilon\mid\varepsilon\in \mathcal{B},f\in \mathcal{B}^\prime\}$ form a basis of $\operatorname{Ext}_\Lambda^\ast(\Delta,\Delta)$, as shown in the proof of Theorem \ref{theorem:ext-algebra of standards isomorphic to dual extension of ext-algebra of simples and B}. This implies that
$$\dim \operatorname{Ext}_\Lambda^\ast(\Delta,\Delta)=\dim F(\operatorname{Ext}_B^\ast(\mathbb{L},\mathbb{L}) + \dim (\operatorname{rad}(\Delta,\Delta)\circ F(\operatorname{Ext}_B^\ast(\mathbb{L},\mathbb{L}))=\dim F(H^B) + \dim \hat{H},$$
which, in turn, implies that $\dim \hat{H}=\dim (\operatorname{rad}(\Delta,\Delta)\circ F(\operatorname{Ext}_B^\ast(\mathbb{L},\mathbb{L})).$ Now we appeal to Lemma \ref{lemma:radical hat composed with induced part isomorphic to}, the statement of which is equivalent to
$$\dim (\operatorname{rad}(\Delta,\Delta)\circ F(\operatorname{Ext}_B^\ast(\mathbb{L},\mathbb{L}))=\dim ( \hat{\operatorname{rad}}(\Delta,\Delta)\circ F(H^B)),$$
completing the proof. \qedhere
\end{proof}
\begin{corollary}\label{corollary:proposition:A-inf structure constructions compatible 1}
	For any $f\in \mathcal{D}^B$, there holds $F\circ p^B (f)=p^\Lambda \circ F(f)$.	
\end{corollary}
\begin{proof} Let $\chi^B:H^B\to \operatorname{Ext}_B^\ast(\mathbb{L},\mathbb{L})$ be the inverse of the isomorphism $i^B: \operatorname{Ext}_B^\ast(\mathbb{L},\mathbb{L})\to H^B$. Similarly, let $\chi^\Lambda:H^\Lambda\to \operatorname{Ext}_B^\ast(\mathbb{L},\mathbb{L})$ be the inverse of the isomorphism $i^\Lambda: \operatorname{Ext}_\Lambda^\ast(\Delta,\Delta)\to H^\Lambda$. Recall that, by definition, we have $p^B=\chi^B \pi^B$ and $p^\Lambda=\chi^\Lambda \pi^\Lambda$, where $\pi^B$ and $\pi^\Lambda$ are as below. Consider the following diagram.
	\begin{align*}\xymatrix{
			H^B \oplus \operatorname{im} \partial^B \oplus L^B \ar[r]^-{\pi^B} \ar[d]_-{F} & H^B \ar[d]_-{F} \ar[r]^-{\chi^B} & \operatorname{Ext}_B^\ast(\mathbb{L},\mathbb{L}) \ar[d]^-{F}\\
			H^\Lambda \oplus \operatorname{im} \partial^\Lambda \oplus L^\Lambda \ar[r]^-{\pi^\Lambda} & H^\Lambda \ar[r]^-{\chi^\Lambda} & \operatorname{Ext}_\Lambda^\ast(\Delta,\Delta)
	}\end{align*}
	The left square commutes, by Proposition \ref{proposition:A-inf structure constructions compatible}. We need to show that there is a choice of the isomorphism $\chi^\Lambda$ such that the right square commutes. According to Proposition \ref{proposition:A-inf structure constructions compatible}, we may write 
	$$H^\Lambda=F(H^B) \oplus (\hat{\operatorname{rad}}(\Delta,\Delta)\circ F(H^B)),\quad\textrm{and}\quad \operatorname{Ext}_\Lambda^\ast(\Delta,\Delta)=F(\operatorname{Ext}_B^\ast(\mathbb{L},\mathbb{L}))\oplus (\operatorname{rad}(\Delta,\Delta)\circ F(\operatorname{Ext}_B^\ast(\mathbb{L},\mathbb{L}))).$$
	
	Since $F:H^B\to F(H^B)$ and $F:\operatorname{Ext}_B^\ast(\mathbb{L},\mathbb{L}) \to F(\operatorname{Ext}_B^\ast(\mathbb{L},\mathbb{L}))$ are isomorphisms, we may define an isomorphism $\psi :F(H^B)\to F(\operatorname{Ext}_B^\ast(\mathbb{L},\mathbb{L}))$ by $\psi=F\chi^BF^{-1}$. Let $\varphi:\hat{\operatorname{rad}}(\Delta,\Delta)\circ F(H^B)\to \operatorname{rad}(\Delta,\Delta)\circ F(\operatorname{Ext}_B^\ast(\mathbb{L},\mathbb{L}))$ be the isomorphism constructed in the proof of Lemma \ref{lemma:radical hat composed with induced part isomorphic to}. Define the isomorphism $$\chi^\Lambda: F(H^B)\oplus (\hat{\operatorname{rad}}(\Delta,\Delta)\circ F(H^B))\to F(\operatorname{Ext}_B^\ast(\mathbb{L},\mathbb{L})\oplus (\operatorname{rad}(\Delta,\Delta)\circ F(\operatorname{Ext}_B^\ast(\mathbb{L},\mathbb{L}))$$
	as the matrix $\chi^\Lambda=\left[\begin{smallmatrix}
		\psi & 0 \\ 0 & \varphi
	\end{smallmatrix}\right]$, and consider the following diagram.
	
	\begin{align*}\xymatrix{
			H^B \ar[d]_-{\left[\begin{smallmatrix}F \\ 0\end{smallmatrix}\right]} \ar[r]^-{\chi^B} & \operatorname{Ext}_B^\ast(\mathbb{L},\mathbb{L}) \ar[d]^-{\left[\begin{smallmatrix}F \\ 0\end{smallmatrix}\right]} \\	
			F(H^B) \oplus (\hat{\operatorname{rad}}(\Delta,\Delta)\circ F(H^B)) \ar[r]_-{\left[\begin{smallmatrix}
					\psi & 0 \\ 0 & \varphi
				\end{smallmatrix}\right]} & F(\operatorname{Ext}_B^\ast (\mathbb{L},\mathbb{L})) \oplus (\operatorname{rad}(\Delta,\Delta)\circ \operatorname{Ext}_B^\ast(\mathbb{L},\mathbb{L}))
	}\end{align*}
	Now, for any $f\in H^B$, we have
	\begin{align*}
		\begin{bmatrix}
			\psi & 0 \\ 0 & \varphi
		\end{bmatrix}\begin{bmatrix}
			F\\0
		\end{bmatrix}f&=\begin{bmatrix}
			\psi & 0 \\ 0 & \varphi 
		\end{bmatrix}\begin{bmatrix}
			F(f) \\ 0
		\end{bmatrix}=\begin{bmatrix}
			\psi F (f) \\ 0
		\end{bmatrix}=\begin{bmatrix}
			F\chi^B (f)\\0
		\end{bmatrix}=\begin{bmatrix}
			F \\ 0
		\end{bmatrix}\chi^B(f). \qedhere
	\end{align*}
\end{proof}
\begin{corollary} \label{corollary:proposition:A-inf structure constructions compatible 2}
For any $f\in \mathcal{D}^B$, there holds $F\circ h^B(f)=h^\Lambda\circ F(f)$.
\end{corollary}
\begin{proof}
	We consider the diagram
	\begin{align*}\xymatrix{H^B\oplus \operatorname{im}\partial^B \oplus L^B \ar[r]^-F \ar[d]_-{h^B} & \left( F(H^B) \oplus  (\hat{\operatorname{rad}}(\Delta,\Delta)\circ F(H^B))\right)\oplus \left(F(\operatorname{im}\partial^B) \oplus \partial^\Lambda(\hat{L})\right) \oplus \left(F(L^B)\oplus \hat{L}\right) \ar[d]^-{h^\Lambda} \\
		L^B \ar[r]_-{F} & F(L^B)}\end{align*}
	Supose $f\in H^B \oplus L^B$. Then the bottom path is immediately zero, and the top path is too, because we have $F(f)\in H^\Lambda \oplus L^\Lambda$, which implies that $h^\Lambda F(f)=0$. If, instead, $f\in \operatorname{im}\partial^B$, put $f=\partial^B(g)$ for some $g\in L^B$. Then, the bottom path equals $F(g)$. For the top path, we get $h^\Lambda F(f)=h^\Lambda F(\partial^B(g))=h^\Lambda \partial^\Lambda F(g)=F(g)$ since $F(g)\in F(L^B)$.
\end{proof}

Armed with Proposition \ref{proposition:A-inf structure constructions compatible} and its corollaries, we are ready to show that the $A_\infty$-structures on $\operatorname{Ext}_B^\ast(\mathbb{L},\mathbb{L})$ and $\operatorname{Ext}_\Lambda^\ast(\Delta,\Delta)$ obtained from our process respect the embedding $\operatorname{Ext}_B^\ast(\mathbb{L},\mathbb{L}) \hookrightarrow \operatorname{Ext}_\Lambda^\ast(\Delta,\Delta)$.
\begin{proposition}\label{proposition:A-inf structure compatible with embedding}
	Let $n\geq 2$. For any $\varepsilon_1,\dots, \varepsilon_n\in \operatorname{Ext}_B^\ast(\mathbb{L},\mathbb{L})$, there holds the following formula.
	\begin{align*}F\left( m_n^B(\varepsilon_1,\dots, \varepsilon_n)\right)=m_n^\Lambda\left(F(\varepsilon_1),\dots, F(\varepsilon_n)\right).\end{align*}
\end{proposition}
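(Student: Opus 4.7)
The plan is to prove the statement by induction on $n$, reducing everything to the intertwining properties already established: $F$ is an algebra homomorphism $\mathcal{D}^B\to \mathcal{D}^\Lambda$, $F\circ p^B = p^\Lambda \circ F$ (Corollary~\ref{corollary:proposition:A-inf structure constructions compatible 1}), $F\circ h^B = h^\Lambda\circ F$ (Corollary~\ref{corollary:proposition:A-inf structure constructions compatible 2}), and the compatibility $F\circ i^B = i^\Lambda\circ F$ that is built into the choice of $i^\Lambda$ in the proof of Corollary~\ref{corollary:proposition:A-inf structure constructions compatible 1}. The strategy is to prove the stronger intermediate claim
\[
F\circ \lambda_n^B = \lambda_n^\Lambda \circ F^{\otimes n}
\]
for all $n\geq 2$, from which the proposition follows immediately after pre-composing with $(i^B)^{\otimes n}$ and post-composing with $p^B$, using the three compatibilities above.

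For the base case $n=2$, we have $\lambda_2^B$ and $\lambda_2^\Lambda$ equal to composition in the respective dg-algebras. Since $F$ acts as $f\mapsto \operatorname{id}_\Lambda\otimes f$, it is a homomorphism of algebras, so
\[
F(\lambda_2^B(f,g)) = F(fg) = F(f)\,F(g) = \lambda_2^\Lambda(F(f),F(g)),
\]
as recorded already in the proof of Theorem~\ref{theorem:ext-algebra of standards isomorphic to dual extension of ext-algebra of simples and B}.

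For the inductive step, suppose $F\circ\lambda_k^B = \lambda_k^\Lambda\circ F^{\otimes k}$ for all $2\leq k < n$. Applying $F$ to the recursive definition
\[
\lambda_n^B = \sum_{\substack{r+s=n\\ r,s\geq 1}} (-1)^{s+1}\,\lambda_2^B(h^B\lambda_r^B \otimes h^B\lambda_s^B),
\]
and using the $n=2$ case to move $F$ inside $\lambda_2^\Lambda$, then Corollary~\ref{corollary:proposition:A-inf structure constructions compatible 2} to move it past each $h^B$, and finally the inductive hypothesis on the $\lambda_r^B,\lambda_s^B$ factors, one obtains term by term
\[
F\bigl(\lambda_2^B(h^B\lambda_r^B\otimes h^B\lambda_s^B)\bigr) = \lambda_2^\Lambda\bigl(h^\Lambda\lambda_r^\Lambda \otimes h^\Lambda\lambda_s^\Lambda\bigr)\circ F^{\otimes n},
\]
with the same sign $(-1)^{s+1}$. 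Summing over $r+s=n$ yields $F\circ\lambda_n^B = \lambda_n^\Lambda\circ F^{\otimes n}$. Note that the edge cases $r=1$ or $s=1$ cause no trouble, because $h\lambda_1=-\operatorname{id}$ on both sides, which of course commutes with $F$.

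Finally, combining the intermediate claim with the definitions $m_n^B = p^B\lambda_n^B (i^B)^{\otimes n}$ and $m_n^\Lambda = p^\Lambda\lambda_n^\Lambda (i^\Lambda)^{\otimes n}$ gives
\[
F(m_n^B(\varepsilon_1,\dots,\varepsilon_n)) = p^\Lambda\lambda_n^\Lambda (F\circ i^B)^{\otimes n}(\varepsilon_1,\dots,\varepsilon_n) = m_n^\Lambda(F(\varepsilon_1),\dots,F(\varepsilon_n)).
\]
The main obstacle is essentially bookkeeping: ensuring the signs and the edge cases in the recursive definition of $\lambda_n$ are handled consistently, and that the compatibility $F\circ i^B = i^\Lambda\circ F$ established only on the $H^B$-summand suffices — which it does, since the $m_n$ are only evaluated after applying $i^B$ and $i^\Lambda$ to elements of $\operatorname{Ext}^\ast$, whose images lie in the $F(H^B)$-summand of $H^\Lambda$.
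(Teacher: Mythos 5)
Your proposal is correct and follows essentially the same route as the paper: an induction establishing $F\circ\lambda_n^B=\lambda_n^\Lambda\circ F^{\otimes n}$ via the recursive definition of $\lambda_n$, the algebra-homomorphism property of $F$ for the base case, and Corollaries \ref{corollary:proposition:A-inf structure constructions compatible 1} and \ref{corollary:proposition:A-inf structure constructions compatible 2} to pass $F$ through $h$ and $p$ (and $i$), exactly as in the paper's argument. Your explicit remark that the compatibility of $i^B$ and $i^\Lambda$ on the $F(H^B)$-summand suffices is a slightly more careful statement of a step the paper leaves implicit, but it is the same proof.
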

\begin{proof}
	We begin by proving the corresponding statement for the maps $\lambda_n^B$ and $\lambda_n^\Lambda$, that is, 
		\begin{align}\label{equation: lambda_n respects embedding}F\left( \lambda_n^B(\varepsilon_1,\dots, \varepsilon_n)\right)=\lambda_n^\Lambda\left(F(\varepsilon_1),\dots, F(\varepsilon_n)\right).\end{align}
		Note that, in the above formula, we have abused notation and identified the extensions $\varepsilon_1,\dots,\varepsilon_n$ with their chain map representatives. We proceed by induction. For $n=2$, we have
		$$F(\lambda_2^B(\varepsilon_1,\varepsilon_2))=F(\varepsilon_1 \varepsilon_2)=F(\varepsilon_1)F(\varepsilon_2)=\lambda_2^\Lambda(F(\varepsilon_1)F(\varepsilon_2)),$$
		since the maps $\lambda_2^B$ and $\lambda_2^\Lambda$ are defined as composition. The basis of the induction is thus clear. Let $n\geq 3$ and suppose the formula \eqref{equation: lambda_n respects embedding} holds for all $r, s < n$. Then, adopting the convention that $h^\Lambda F \lambda_1^B=-F$, we have 
	\begin{align*}
	\lambda_n^\Lambda(F(\varepsilon_1),\dots, F(\varepsilon_n))&=\sum_{r+s=n} (-1)^{s+1}\lambda^\Lambda_2(h^\Lambda\lambda^\Lambda_r(F(\varepsilon_1),\dots, F(\varepsilon_r))\otimes h^\Lambda\lambda^\Lambda_s(F(\varepsilon_{r+1}),\dots, F(\varepsilon_n)))\\
	&=\sum_{r+s=n} (-1)^{s+1} \lambda_2^\Lambda(h^\Lambda (F( \lambda_r^B(\varepsilon_1,\dots, \varepsilon_r)))\otimes h^\Lambda(F( \lambda_s^B(\varepsilon_{r+1},\dots, \varepsilon_n))) \\
	&=\sum_{r+s=n}(-1)^{s+1}\lambda_2^\Lambda( F(h^B\lambda_r^B(\varepsilon_1,\dots, \varepsilon_r))\otimes F(h^B\lambda_s^B(\varepsilon_{r+1},\dots, \varepsilon_n)))\\
	&=\sum_{r+s=n} (-1)^{s+1}F(\lambda_2^B (h^B\lambda_r^B(\varepsilon_1,\dots, \varepsilon_r)\otimes h^B\lambda_s^B(\varepsilon_{r+1},\dots, \varepsilon_n)))\\
	&=F \left(\sum_{r+s=n} (-1)^{s+1}\lambda_2^B (h^B\lambda_r^B(\varepsilon_1,\dots, \varepsilon_r)\otimes h^B\lambda_s^B(\varepsilon_{r+1},\dots, \varepsilon_n))\right)\\
	&=F( \lambda_n^B(\varepsilon_1,\dots, \varepsilon_n)),
	\end{align*}
where we additionally use Corollary \ref{corollary:proposition:A-inf structure constructions compatible 2}. This fact, together with Corollary \ref{corollary:proposition:A-inf structure constructions compatible 1}, then implies
	\begin{align*}
	m_n^\Lambda(F(\varepsilon_1),\dots F(\varepsilon_n))&=p^\Lambda \lambda_n^\Lambda ({i^\Lambda})^{\otimes n} (F(\varepsilon_1),\dots, F(\varepsilon_n))=p^\Lambda \lambda_n^\Lambda(F(\varepsilon_1),\dots, F(\varepsilon_n))=p^\Lambda (F(\lambda_n^B(\varepsilon_1,\dots, \varepsilon_n)))\\
	&=F( p^B \lambda_n^B(\varepsilon_1,\dots, \varepsilon_n))=F( p^B \lambda_n^B{i^B}^{\otimes n}(\varepsilon_1,\dots, \varepsilon_n))=F( m_n^B(\varepsilon_1,\dots, \varepsilon_n)). \qedhere
	\end{align*}
\end{proof}
Proposition \ref{proposition:A-inf structure compatible with embedding} describes how the $A_\infty$-multiplications on $\operatorname{Ext}_\Lambda^\ast(\Delta,\Delta)$ behave when their arguments are extensions of the form $F(\varepsilon)$ for $\varepsilon\in \operatorname{Ext}_B^\ast(\mathbb{L},\mathbb{L})$. But, as we know from Theorem \ref{theorem:ext-algebra of standards isomorphic to dual extension of ext-algebra of simples and B}, the algebra $\operatorname{Ext}_\Lambda^\ast(\Delta,\Delta)$ contains extensions which are not of this form.

To be able to say something about the $A_\infty$-structure on the whole of $\operatorname{Ext}_\Lambda^\ast(\Delta,\Delta)$, we make the following technical assumption. Assume in what follows, that in the decomposition $\mathcal{D}^B=H^B\oplus \operatorname{im}\partial^B \oplus L^B$, the space $L^B$ may be chosen in such a way that all components of any map $\varepsilon\in L^B$ are radical maps, that is, the components of $\varepsilon\in L^B$ are given by matrices, whose entries are of the form $\rho_p$, where $p$ is a linear combination of non-trivial paths in $B$.

\begin{lemma}\label{lemma: L^Lambda consists of only radical maps}
In the decomposition $\mathcal{D}^\Lambda=H^\Lambda \oplus \operatorname{im}\partial^\Lambda \oplus L^\Lambda$, the space $L^\Lambda$ may be chosen in such a way that all components of any map in $L^\Lambda$ are given by matrices whose entries are of the form $\rho_{q^\prime p}$, where $p$ is a linear combination of non-trivial paths in $B$ and $q^\prime$ is some linear combination of paths in $A^{\operatorname{op}}$.
\end{lemma}
\begin{proof}
	Consider $\varepsilon\in \mathcal{D}^\Lambda$, with $\varepsilon$ homogeneous of degree $n$. Recall that, if $P^\bullet \to \mathbb{L}$ is a minimal projective resolution, then so is $F(P^\bullet)\to \Delta$. We write $\varepsilon=\alpha+\beta+\gamma$, where $\alpha \in H^\Lambda, \beta\in \operatorname{im}\partial^\Lambda$ and $\gamma\in L^\Lambda$. Consider the following picture.
	
	$$\varepsilon:\vcenter{\xymatrix{
	F(P_{n+1}) \ar[d]^-{\varepsilon_{n+1}}\ar[r] & F(P_n) \ar[d]^-{\varepsilon_n}\ar[r] & F(P_{n-1})	\\
	F(P_1)\ar[r] & F(P_0)\ar[r] & 0
	}}$$
Fix decompositions of $P_n$ and $P_0$ into direct sums of indecomposable projective $B$-modules. This induces natural decompositions of $F(P_n)$ and $F(P_0)$ into direct sums of indecomposable projective $\Lambda$-modules. Assume now that the matrix of the map $\varepsilon_n$ has a non-zero entry, $f$, at some fixed position, which is not a radical map. Then, we may write $f=\mu \cdot 1_{P_\Lambda(x)} + f^\prime$, where $\mu\in K$ is some scalar and $f^\prime\in \operatorname{rad}(P_\Lambda(x),P_\Lambda(x))$.

Consider now instead the map $\overline{\varepsilon}\in \mathcal{D}^B$, homogeneous of degree $n$, such that the entry at the fixed position of the matrix of $\overline{\varepsilon}_n$ is equal to $\mu \cdot 1_{P_B(x)}$ and all other entries are 0. Put $\overline{\varepsilon}_k=0$ for $n\neq k$.

$$\overline{\varepsilon}: \vcenter{\xymatrix{
	P_{n+1} \ar[d]^-{0}\ar[r] & P_n \ar[d]^-{\overline{\varepsilon}_n=\overline{\alpha}_n+\overline{\beta}_n+\overline{\gamma}_n}\ar[r] & P_{n-1}	\\
	P_1\ar[r] & P_0\ar[r] & 0
}}$$
Now, we write $\overline{\varepsilon}=\overline{\alpha}+\overline{\beta}+\overline{\gamma}$, where $\overline{\alpha}\in H^B, \overline{\beta}\in \operatorname{im}\partial^B$ and $\overline{\gamma}\in L^B$. The maps in $L^B$ have only radical components by assumption, and since the differential maps on $P^\bullet$ are radical maps, the map $\overline{\beta}$ has only radical components. Since the map $1_{P_B(x)}$ cannot be written as a linear combination of radical maps, it follows that the map $\overline{\alpha}_n$ is given by a matrix whose entry at the fixed position equals $\mu\cdot 1_{P_B(x)}+g$, where $g:P_B(x)\to P_B(x)$ is a radical map. Since $B$ is directed, we have $\dim \operatorname{End}_B(P_B(x))=1$, which implies $g=0$.

Consider now the map $F(\overline{\alpha})$. This is a chain map, which, by construction, is such that the map $F(\overline{\alpha})_n$ is given by a matrix whose entry at the fixed position is $F(\mu \cdot 1_{P_B(x)})=\mu\cdot 1_{P_\Lambda(x)}$.

It follows now, that the map $(\varepsilon-F(\overline{\alpha}))_n$ is given by a matrix, whose entry at the fixed position is
$$f-\mu\cdot 1_{P_\Lambda(x)}=\mu\cdot 1_{P_\Lambda(x)}+f^\prime - \mu\cdot 1_{P_\Lambda(x)}=f^\prime.$$

Repeating this argument, we may write $\varepsilon=\tilde{\varepsilon} + \sum_{k=1}^N F(\overline{\alpha}^k)$, where $\tilde{\varepsilon}$ has only radical components, and where $\sum_{k=1}^NF(\overline{\alpha}^k)\in F(H^B)\subset H^\Lambda$.

Assume now, instead, that the map $\varepsilon_n$ is given by a matrix whose entry at the fixed position equals
$f=\rho_{q^\prime p}+\rho_{r^\prime},$ where $q^\prime p$ is a linear combination of paths in $\Lambda$ and $r^\prime$ is a linear combination of non-trivial paths in $A^{\operatorname{op}}$.
According to the discussion prior to Proposition \ref{proposition:m_2 in ext-algebra}, there is a chain map $\delta:F(P^\bullet)\to F(P^\bullet)[n]$, such that the map $\delta_n:F(P_n)\to F(P_0)$ is given by a matrix, whose only non-zero entry is at the fixed position, where it is $\rho_{r^\prime}$, and with $\delta_k=0$, for $k\neq n$. Now, we argue as in the previous case, and consider the map $(\varepsilon-\delta)_n$, which is given by a matrix whose entry at the fixed position is equal to $\rho_{q^\prime p} + \rho_{r^\prime}-\rho_{r^\prime}=\rho_{q^\prime p}$. It follows that we
 may write $\varepsilon=\hat{\varepsilon}+\delta$, where $\hat{\varepsilon}$ has no component given by a matrix with an entry of the form $\rho_{q^\prime}$, where $q^\prime$ is a linear combination of non-trivial paths in $A^{\operatorname{op}}$, and where $\delta \in \hat{\operatorname{rad}}(\Delta,\Delta)\subset H^\Lambda$.

Next, let $\tilde{\mathcal{D}}^\Lambda$ denote the subspace of maps whose components are matrices whose entries are of the form $\rho_{q^\prime p}$, where $p$ is a linear combination of non-trivial paths in $B$ and $q^\prime$ is some linear combination of paths in $A^{\operatorname{op}}$. The argument above shows that we may write $\mathcal{D}^\Lambda=H^\Lambda + \tilde{\mathcal{D}}^\Lambda$. We observe that $\tilde{\mathcal{D}}^\Lambda$ is a dg subalgebra of $\mathcal{D}^\Lambda$, since, clearly, $\partial^\Lambda (\tilde{\mathcal{D}}^\Lambda)\subset \tilde{\mathcal{D}}^\Lambda$. Letting $\tilde{Z}\subset \tilde{\mathcal{D}}^\Lambda$ denote the subspace spanned by cycles, we may write $\tilde{\mathcal{D}}^\Lambda=\tilde{Z}\oplus \tilde{L}$, where $\tilde{L}$ is some complement. We claim that $\mathcal{D}^\Lambda =Z^\Lambda \oplus \tilde{L}$. We have
$$\mathcal{D}^\Lambda=H^\Lambda + \tilde{\mathcal{D}}^\Lambda =Z^\Lambda + \tilde{\mathcal{D}}^\Lambda =Z^\Lambda +(\tilde{Z}+\tilde{L})=Z^\Lambda+ \tilde{L}, $$
since $H^\Lambda\subset Z^\Lambda$ and $\tilde{Z}\subset Z^\Lambda$. Assume that $f\in Z\cap \tilde{L}$. Then, clearly, $f\in \tilde{Z}$. But, since $\tilde{Z}$ and $\tilde{L}$ are chosen to be complements, we must have $f=0$. 
\end{proof}
\begin{lemma}\label{lemma:projection to ext-algebra is hom-linear}
	
	Let $\varepsilon\in \mathcal{D}^B$ be homogeneous of degree $n$, such that $\varepsilon=F(\overline{\varepsilon})$ for some $\overline{\varepsilon}$. Let $f^\prime\in \hat{\operatorname{rad}}(\Delta,\Delta)$ be a chain map representative of a homomorphism $\Delta_\Lambda(i)\to \Delta_\Lambda(j)$. Then, we have $p^\Lambda(f^\prime \varepsilon)=f^\prime F(p^B (\overline{\varepsilon}))$ as elements of $\operatorname{Ext}_\Lambda^\ast(\Delta,\Delta)$.
\end{lemma}
\begin{proof}
	By definition, we have $p^\Lambda=\chi^\Lambda \pi^\Lambda$, where the map $\pi^\Lambda:\mathcal{D}^\Lambda\to H^\Lambda$ is the natural surjection, and the map $\chi^\Lambda: H^\Lambda\to \operatorname{Ext}_\Lambda(\Delta,\Delta)$ is the isomorphism defined in the proof of Corollary \ref{corollary:proposition:A-inf structure constructions compatible 1}. Similarly, we have $p^B=\chi^B \pi^B$. Consider the following diagram. Write $\mathcal{D}^B=H^B\oplus \operatorname{im}\partial^B\oplus L^B$ and let $\varepsilon=\varepsilon_1+\varepsilon_2+\varepsilon_3\in \mathcal{D}^B$, where $\varepsilon_1\in H^B, \varepsilon_2\in \operatorname{im}\partial^B$ and $\varepsilon_3\in L^B$.
	$$\xymatrix{
	\mathcal{D}^B \ar[d]_-{F}\ar[r]^-{\pi^B} & H^B \ar[r]^-{\chi^B} & \operatorname{Ext}_B^\ast(\mathbb{L},\mathbb{L})\ar[r]^-F & F(\operatorname{Ext}_B^\ast(\mathbb{L},\mathbb{L}) \ar[d]^-{f^\prime \circ \blank}\\
	\mathcal{D}^\Lambda \ar[r]_-{f^\prime \circ \blank}& \mathcal{D}^\Lambda \ar[r]_-{\pi^\Lambda}& H^\Lambda \ar[r]_-{\chi^\Lambda}& \operatorname{Ext}_\Lambda^\ast(\Delta,\Delta)
	}$$
For the top path, we then have
\begin{align*}
	f^\prime F \chi^B \pi^B(\varepsilon)&=f^\prime F \chi^B \pi^B(\varepsilon_1+\varepsilon_2+\varepsilon_3)=f^\prime F\chi^B(\varepsilon_1).
\end{align*}
Consider now the bottom path. The first two arrows correspond to the map
$$\varepsilon_1+\varepsilon_2+\varepsilon_3\mapsto F(\varepsilon_1)+F(\varepsilon_2)+F(\varepsilon_3)\mapsto f^\prime F(\varepsilon_1) +f^\prime F(\varepsilon_2)+ f^\prime F(\varepsilon_3).$$

Arguing as in the proof of Lemma \ref{lemma:form of hom composed with chain map}, we know that the map $f^\prime F(\varepsilon_2)$ has at most one non-zero component, which we denote by $\hat{f} \hat{\varepsilon}_2:P_n\to P_0$. Since $\varepsilon_2\in \operatorname{im}\partial^B$, and the differential maps are radical maps, the map $\varepsilon_2$ is given by a matrix whose entries are of the form $\rho_p$, where $p$ is a linear combination of non-trivial paths in $B$. Moreover, since $f\in \hat{\operatorname{rad}}(\Delta,\Delta)$, the entries of the matrix of $\hat{f}$ are of the form $\rho_{q^\prime}$, where $q^\prime$ is a linear combination of non-trivial paths in $A^{\operatorname{op}}$. Then, $\hat{f}\hat{\varepsilon}_2=0$, since $\rho_q^\prime \rho_p=\rho_{pq^\prime}=0$, according to the dual extension relation.

Since $\varepsilon_3\in L^B$, and we have assumed that $L^B$ may be chosen to consist of maps with only radical maps for components, a similar argument as above shows that $f^\prime F(\varepsilon_3)=0$.

This means that, in the bottom path, we have $f^\prime F(\varepsilon)=f^\prime F(\varepsilon_1)$. 
Recall that we have chosen
	$$H^\Lambda=F(H^B)\oplus (\hat{\operatorname{rad}}(\Delta,\Delta)\circ F(H^B)) \quad \textrm{and} \quad \operatorname{Ext}_\Lambda^\ast(\Delta,\Delta)=F(\operatorname{Ext}_B^\ast(\mathbb{L},\mathbb{L}))\oplus (\operatorname{rad}(\Delta,\Delta)\circ F(\operatorname{Ext}_B^\ast(\mathbb{L},\mathbb{L})).$$
	Then, we get
	\begin{align*}
		\chi^\Lambda \pi^\Lambda (f^\prime F(\varepsilon_1))&=\chi^\Lambda \left[\begin{smallmatrix}
			0 \\ f^\prime F(\varepsilon_1)
		\end{smallmatrix}\right]=\left[\begin{smallmatrix}
		\psi & 0 \\ 0 &\varphi
	\end{smallmatrix}\right]\left[\begin{smallmatrix}
	0 \\ f F(\varepsilon_1)
\end{smallmatrix}\right]=\varphi(fF(\varepsilon_1))=f F\chi^B(\varepsilon_1),
	\end{align*}
so the top path equals the bottom path, and we are done.
\end{proof}
\begin{lemma}\label{lemma:the map h on non-induced part is 0}
	Let $\varepsilon\in \mathcal{D}^\Lambda$ be a homogeneous map of degree $n$, and let $f \in \hat{\operatorname{rad}}(\Delta,\Delta)$. Then, there holds $h^\Lambda(f\circ \varepsilon)=0$.
\end{lemma}
\begin{proof}
	We write $\varepsilon=\varepsilon_1+\varepsilon_2+\varepsilon_3$, where $\varepsilon_1\in H^\Lambda, \varepsilon_2\in \operatorname{im}\partial^\Lambda$ and $\varepsilon_3\in L^\Lambda$. Then, we have $$h^\Lambda(f\varepsilon)=h^\Lambda(f\varepsilon_1)+h^\Lambda(f\varepsilon_2)+h^\Lambda(f\varepsilon_3).$$
	Since we have chosen $H^\Lambda=F(H^B)\oplus \hat{\operatorname{rad}}(\Delta,\Delta)\circ F(H^B)$, we have $f\varepsilon_1\in H^\Lambda$, which implies that $h^\Lambda(f\varepsilon_1)=0$, by definition. Since the components of $\varepsilon_3$ are matrices whose entries are of the form $\rho_{p}$, where $p$ is a linear combination of non-trivial paths in $B$, we have $f\varepsilon_3=0$, by appealing to the dual extension relation, just like in the previous proof. Next, note that $\varepsilon_2\in \operatorname{im}\partial^\Lambda=F(\operatorname{im}\partial^B)\oplus \partial^\Lambda(\hat{L})$, where $\hat{L}$ is as in the statement of Proposition \ref{proposition:A-inf structure constructions compatible}. Since the differential maps on $P^\bullet\to \mathbb{L}$ are radical maps, the space $F(\operatorname{im}\partial^B)$ consists of maps whose components are given by matrices whose entries are of the form $\rho_p$, where $p$ is a linear combination of non-trivial paths in $B$. By Lemma \ref{lemma: L^Lambda consists of only radical maps}, the space $\partial^\Lambda(\hat{L})$ consists of maps whose components are given by matrices whose entries are of the form $\rho_{q^\prime p}$, where $p$ is a linear combination of non-trivial paths in $B$. Then, a similar argument as before, using the dual extension relation, shows that $f\varepsilon_2=0$.
\end{proof}
Now we are ready to prove our main theorem. Note that, by linearity, the following formulae determine the $A_\infty$-structure on $\operatorname{Ext}_\Lambda^\ast(\Delta,\Delta)$ completely.
\begin{theorem}\label{theorem:A-inf structure on ext algebra of standards}
	Let $n\geq 2$. For any $f^\prime_1,\dots, f_n^\prime \in \operatorname{Hom}_\Lambda(\Delta,\Delta)$ and $\varepsilon_1,\dots, \varepsilon_n\in \operatorname{Ext}_B^\ast(\mathbb{L},\mathbb{L})$, such that $\deg \varepsilon_i \geq 1$, for all $1\leq i\leq n$, there hold the following.
	\begin{enumerate}[label=$(\roman*)$]
		\item If there is $1\leq i\leq n-1$, such that $f_i^\prime\in \operatorname{rad}(\Delta_\Lambda(j),\Delta_\Lambda(k))$, we have $m_n^\Lambda(f_n^\prime \varepsilon_n, \dots, f_1^\prime \varepsilon_1)=0.$
		\item We have
	\begin{align*}
		m_n^\Lambda(f_n^\prime \varepsilon_n,\varepsilon_{n-1}, \dots, \varepsilon_1)&=(-1)^{n+1} f_n ^\prime F \left(p^B \varepsilon_n h^B (\lambda^B_{n-1}(\varepsilon_{n-1},\dots, \varepsilon_1))\right).
		\end{align*}
	\end{enumerate}
\end{theorem}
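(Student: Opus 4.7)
The plan is to prove the theorem by strong induction on $n \geq 2$, treating both (i) and (ii) simultaneously and strengthening each statement to the chain-map level in $\mathcal{D}^\Lambda$. Concretely I would replace (ii) by the chain-level identity
\[ \lambda_n^\Lambda(f'_n \varepsilon_n, \varepsilon_{n-1}, \dots, \varepsilon_1) = (-1)^{n+1} f'_n \cdot F\!\left( \varepsilon_n \cdot h^B(\lambda_{n-1}^B(\varepsilon_{n-1}, \dots, \varepsilon_1)) \right), \]
and replace (i) by the vanishing $\lambda_n^\Lambda(f'_n \varepsilon_n, \dots, f'_1 \varepsilon_1) = 0$ as a chain map whenever some $f'_j$ with $1 \leq j \leq n-1$ lies in the radical. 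The theorem then follows on applying $p^\Lambda$, using Lemma~\ref{lemma:projection to ext-algebra is hom-linear} in case (ii).

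The base case $n=2$ is direct. For (ii), $\lambda_2^\Lambda(f'_2 \varepsilon_2, \varepsilon_1) = f'_2 \cdot F(\varepsilon_2 \varepsilon_1)$, matching the claimed formula via $h^B \lambda_1^B = -\operatorname{id}_{\mathcal{D}^B}$. For (i), the chain-map composition $f'_2 \varepsilon_2 \circ f'_1 \varepsilon_1$ equals right multiplication by a product of $B$- and $A^{\operatorname{op}}$-paths in $\Lambda$ whose interior necessarily contains an ``$A^{\operatorname{op}}$-arrow then $B$-arrow'' transition, killed by the defining relation $\alpha \beta' = 0$; edge cases in which some $\varepsilon_i$ is a degree-zero idempotent reduce to the positive-degree case by absorbing the idempotent into an adjacent hom.

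Both inductive steps use the Merkulov recursion $\lambda_n^\Lambda = \sum_{r+s=n}(-1)^{s+1}\lambda_2^\Lambda(h^\Lambda \lambda_r^\Lambda \otimes h^\Lambda \lambda_s^\Lambda)$. For the inductive step of (ii): any summand with $r \geq 2$ has $\lambda_r^\Lambda(f'_n \varepsilon_n, \varepsilon_{n-1}, \dots, \varepsilon_{s+1})$ of the form $f'_n \cdot F(\cdot)$ by the inductive hypothesis, whence Lemma~\ref{lemma: the map h is 0 on non-induced part} kills its image under $h^\Lambda$; only the $(r, s) = (1, n-1)$ term survives, and combining $h^\Lambda \lambda_1^\Lambda = -\operatorname{id}$, the $\lambda$-analogue of Proposition~\ref{proposition:A-inf structure compatible with embedding} (established in the course of its proof), and Corollary~\ref{corollary:proposition:A-inf structure constructions compatible 2} yields the desired identity. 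For the inductive step of (i): each summand $(r, s)$ vanishes. If $r \geq 2$, the left subgroup's $\lambda_r^\Lambda$ is either of the form $f'_n \cdot F(\cdot)$ (by the inductive (ii), when the radical lies in the right subgroup) or equals $0$ (by the inductive (i), when the radical sits interior to the left subgroup); in either case Lemma~\ref{lemma: the map h is 0 on non-induced part} annihilates $h^\Lambda \lambda_r^\Lambda$. If $r = 1$, the same dichotomy applied to $\lambda_{n-1}^\Lambda$ of the right subgroup (of size $\geq 2$) yields $h^\Lambda \lambda_{n-1}^\Lambda = 0$. Every summand therefore vanishes, and $\lambda_n^\Lambda = 0$.

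The principal obstacle is the base case chain-level vanishing for (i), which sits outside the inductive pattern and requires the direct structural analysis of the $\Lambda$-relations sketched above. A secondary technical concern is careful position-bookkeeping in the inductive step of (i), ensuring that the strengthened hypotheses correctly apply to both subgroups produced by each split in the recursion --- in particular, that the possibly non-identity $f'_n$ sitting at the leftmost slot of the left subgroup does not interfere with the invocation of either (i) or (ii) at level $r$.
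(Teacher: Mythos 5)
Your proposal follows essentially the same route as the paper: prove the chain-level formula for $\lambda_n^\Lambda$ by induction on the Merkulov recursion, use Lemma \ref{lemma: the map h is 0 on non-induced part} to kill every summand except the $(r,s)=(1,n-1)$ one (respectively all summands in the radical case), and then pass to $m_n^\Lambda$ by applying $p^\Lambda$ together with the compatibility results (the chain-level statement inside Proposition \ref{proposition:A-inf structure compatible with embedding}, Corollaries \ref{corollary:proposition:A-inf structure constructions compatible 1}--\ref{corollary:proposition:A-inf structure constructions compatible 2}) and Lemma \ref{lemma:projection to ext-algebra is hom-linear}. The only differences are presentational --- you phrase the intermediate chain-level identity directly in the $F(\cdot)$-form and spell out the base case and the position bookkeeping that the paper leaves implicit --- so the argument is correct and matches the paper's proof.
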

\begin{proof}
	We first prove the following formulae for $\lambda_n^\Lambda$.
	\begin{align*}\left\{ \begin{array}{l l}
		\lambda_n^\Lambda(f_n^\prime \varepsilon_n, \dots, f_1^\prime \varepsilon_1)=0, &\textrm{ if } \exists 1\leq i \leq n-1 : f_i^\prime\in \operatorname{rad}(\Delta_\Lambda(j),\Delta_\Lambda(k));\\
		\lambda_n^\Lambda(f_n^\prime \varepsilon_n,\varepsilon_{n-1}, \dots, \varepsilon_1)=(-1)^{n+1} f_n^\prime \varepsilon_n h^\Lambda \lambda^\Lambda_{n-1}(\varepsilon_{n-1},\dots,\varepsilon_1). & 
	\end{array}\right.\end{align*}
	If $n=2$, the first formula claims that
	$$\lambda_2^\Lambda(f_2^\prime\varepsilon_2, f_1^\prime \varepsilon_1)=f_2^\prime \varepsilon_2 f_1^\prime \varepsilon_1=0.$$
	Using the proof of Proposition \ref{proposition:m_2 in ext-algebra}, we see that a chain map representative of $f_1^\prime$ may be chosen so that the composition $\varepsilon_2 f_1^\prime$ equals the zero chain map. Assume that
	 $$\lambda_k^\Lambda(f^\prime_k \varepsilon_k,\dots, f_1^\prime \varepsilon)=0,\quad \textrm{and}\quad	\lambda_k^\Lambda(f_k^\prime \varepsilon_k,\varepsilon_{k-1}, \dots, \varepsilon_1)=(-1)^{k+1} f_k^\prime \varepsilon_k h^\Lambda \lambda^\Lambda_{k-1}(\varepsilon_{k-1},\dots,\varepsilon_1),$$ for all $k<n$, and assume that $f_i^\prime\in \operatorname{rad}(\Delta,\Delta)$, for some $2\leq i\leq n-1$. Consider the sum
	$$\lambda_n^\Lambda(f_n^\prime\varepsilon,\dots, f_1^\prime \varepsilon_1)=\sum_{r+s=n} (-1)^{s+1}\lambda_2(h^\Lambda\lambda_r^\Lambda(f_n^\prime \varepsilon_n, \dots, f_{s+1}^\prime\varepsilon_{s+1})\otimes h^\Lambda \lambda_s(f_s^\prime \varepsilon_s,\dots, f_1^\prime \varepsilon_1)),\quad s\geq 1.$$
	By the first claim of the induction hypothesis, all terms in the sum vanish, except for
	\begin{align*}(-1)^{n-i+1} \lambda^\Lambda_2( h\lambda^\Lambda_{n-i+1}(f_n^\prime\varepsilon_n,\dots, f_{i+1}^\prime \varepsilon_{i+1})\otimes h\lambda^\Lambda_i(f^\prime _i \varepsilon_i,\dots, f_1^\prime\varepsilon_1)).\end{align*}
	By the second claim of the induction hypothesis and Lemma \ref{lemma:the map h on non-induced part is 0}, we have
	\begin{align*}h^\Lambda\lambda^\Lambda_i(f_i^\prime \varepsilon_i,\dots, f_1^\prime \varepsilon_1)&=(-1)^{i+1}h^\Lambda (f_i^\prime \varepsilon_i h^\Lambda \lambda^\Lambda_{i-1}(f_{i-1}^\prime\varepsilon_{i-1},\dots, f_1^\prime\varepsilon_1))=0.
	\end{align*}
    Note that $\lambda_n^\Lambda(f_n^\prime \varepsilon_n,\dots,f_1^\prime \varepsilon_1)=0$ implies that $m_n^\Lambda(f_n^\prime\varepsilon_n,\dots, f_1^\prime \varepsilon_1)=0$.
	Now we prove the second formula. If $n=2$, the second formula claims that
	$$\lambda_2^\Lambda(f_2^\prime \varepsilon_2, \varepsilon_1)=- f_2^\prime \varepsilon_2 h^\Lambda \lambda_1^\Lambda (\varepsilon_1).$$
	This is true, as $\lambda_2^\Lambda$ is just composition, and $h^\Lambda \lambda_1^\Lambda=-\operatorname{id}$, by definition. Suppose
	$$	\lambda_k^\Lambda(f_k^\prime \varepsilon_k,\varepsilon_{k-1}, \dots, \varepsilon_1)=(-1)^{k+1} f_k^\prime \varepsilon_k h^\Lambda \lambda^\Lambda_{k-1}(\varepsilon_{k-1},\dots,\varepsilon_1),$$
	for all $k<n$. Consider the sum
	\begin{align*}\lambda^\Lambda_n(f_n^\prime\varepsilon_n,\dots, \varepsilon_1)=\sum_{r+s=n}(-1)^{s+1} \lambda_2^\Lambda(h^\Lambda \lambda^\Lambda_r(f_n^\prime \varepsilon_n,\dots, \varepsilon_{s+1})\otimes h^\Lambda \lambda^\Lambda_s(\varepsilon_s,\dots, \varepsilon_1)).\end{align*}
	If $r>1$, we have
	\begin{align*}
		h^\Lambda \lambda_r^\Lambda(f_n^\prime \varepsilon_n,\dots, \varepsilon_{s+1})&=h^\Lambda \left((-1)^{r+1} f_n^\prime \varepsilon_n h^\Lambda \lambda^\Lambda_{k-1}(\varepsilon_{k-1},\dots,\varepsilon_1)\right)=(-1)^{r+1}h^\Lambda\left(f_n^\prime \varepsilon_n h^\Lambda \lambda_{k-1}^\Lambda(\varepsilon_{k-1},\dots,\varepsilon_1)\right)=0,
	\end{align*}
	again using the induction hypothesis and Lemma \ref{lemma:the map h on non-induced part is 0}. Therefore, all terms in the sum with $r>1$ vanish. It follows, then, that we have
	\begin{align*}
		\lambda^\Lambda_n(f_n^\prime\varepsilon_n,\dots, \varepsilon_1)&=(-1)^n \lambda^\Lambda_2 (-f_n^\prime \varepsilon_n \otimes h^\Lambda \lambda^\Lambda_{n-1}(\varepsilon_{n-1},\dots, \varepsilon_1))=(-1)^{n+1} f_n^\prime \varepsilon_n h^\Lambda \lambda^\Lambda_{n-1}(\varepsilon_{n-1},\dots,\varepsilon_1).
	\end{align*}
Plugging this formula into the definition of $m_n^\Lambda$, we get
	\begin{align*}
		m_n^\Lambda(f_n^\prime \varepsilon_n,\dots, \varepsilon_1)&=p^\Lambda \lambda^\Lambda_n {(i^\Lambda)}^{\otimes n}(f_n^\prime \varepsilon_n,\dots,  \varepsilon_1)=p^\Lambda (-1)^{n+1} f_n^\prime \varepsilon_n h^\Lambda \lambda^\Lambda_{n-1}(\varepsilon_{n-1},\dots,\varepsilon_1)\\
		&=(-1)^{n+1}p^\Lambda \left(f_n^\prime \varepsilon_n h^\Lambda \lambda^\Lambda_{n-1}(\varepsilon_{n-1},\dots,\varepsilon_1)\right)\\
		&=(-1)^{n+1}p^\Lambda\left( f_n^\prime \varepsilon_n h^\Lambda F(\lambda_{n-1}^B(\varepsilon_{n-1},\dots,\varepsilon_1))\right)\\
		&=(-1)^{n+1} p^\Lambda \left( f_n^\prime \varepsilon_n F(h^B)F(\lambda_{n-1}^B(\varepsilon_{n-1},\dots, \varepsilon_1))\right)\\
		&=(-1)^{n+1}p^\Lambda\left( f_n^\prime \varepsilon_n F(h^B\lambda_{n-1}^B(\varepsilon_{n-1},\dots, \varepsilon_1))\right)\\
		&=(-1)^{n+1}f_n^\prime F(p^B \varepsilon_n h^B(\lambda_{n-1}^B(\varepsilon_{n-1},\dots,\varepsilon_1))),
	\end{align*}
	by applying Corollary \ref{corollary:proposition:A-inf structure constructions compatible 1}, Corollary \ref{corollary:proposition:A-inf structure constructions compatible 2} and Lemma \ref{lemma:projection to ext-algebra is hom-linear}. \qedhere
\end{proof}

\section{$A_\infty$-structure on the $\operatorname{Ext}$-algebra of simple modules over $\faktor{K\mathbb{A}_n}{(\operatorname{rad}K\mathbb{A}_n)^\ell}$}
Next, we want to apply Theorem \ref{theorem:A-inf structure on ext algebra of standards} in an example. Since the theorem describes the $A_\infty$-multiplications on $\operatorname{Ext}_\Lambda^\ast(\Delta,\Delta)$ in terms of the data consituting the $A_\infty$-structure on $\operatorname{Ext}_B^\ast(\mathbb{L},\mathbb{L})$, any example requires that we first compute the $A_\infty$-structure on $\operatorname{Ext}_B^\ast(\mathbb{L},\mathbb{L})$.
\subsection{Quiver and relations} We consider the algebra $B=\faktor{K\mathbb{A}_n}{(\operatorname{rad}K\mathbb{A}_n)^\ell}$, for $\ell\geq 3$. Then, $B$ is described by the quiver
$\xymatrix@=0.3cm{
1 \ar[r]^-{\alpha_1}  & \dots \ar[r]^-{\alpha_{n-1}}& n
}$ modulo the relations $\alpha_{i+\ell}\dots \alpha_{i}=0$. The Loewy diagrams of the indecomposable projective $B$-modules are:
$$P_B(i):\vcenter{\xymatrix@=0.3cm{
	i \ar[d] \\ \vdots  \ar[d] \\ i+\ell-1	
}},\quad \textrm{if}\quad i\leq n-\ell, \quad \textrm{and} \quad P_B(i): \vcenter{\xymatrix@=0.3cm{i\ar[d]\\ \vdots \ar[d] \\n }}, \quad \textrm{if}\quad i>n-\ell.$$
It is easy to compute that there is a minimal projective resolution $P^\bullet\to L_B(i)$, which has terms
\begin{align*}P^k=\left\{\begin{array}{l l}
	P_B(i+q\ell), & \textrm{if}\quad k=2q; \\
	P_B(i+q\ell +1), & \textrm{if}\quad k=2q+1.
\end{array}\right.\end{align*}

For indecomposable projective $B$-modules, $P_B(i)$ and $P_B(j)$, such that $j\geq i$, and $|i-j|<\ell$, we have $\dim \operatorname{Hom}_B(P_B(j),P_B(i))=1$. This space contains scalar multiples of the map $\rho_\alpha$, that is, right multiplication with the (unique) path $\alpha:i\to j$ in the quiver. Denote this map by $f_j^i$.

Now, we check that, in the decomposition $\mathcal{D}^B=H^B\oplus \operatorname{im}\partial^B \oplus L^B$, the space $L^B$ may be chosen in such a way that any component of a map $\varepsilon\in L^B$ has components given by matrices whose entries are of the form $\rho_p$, where $p$ is a linear combination of non-trivial paths in $B$. To this end, let $\varepsilon\in \mathcal{D}^B$ be a map, homogeneous of degree $k$, such that the matrix of the map $\varepsilon_k$ has an entry, $g$, at position $(r,s)$, which is not a radical map. Since the maps $f_j^i$ are radical for $i\neq j$, the map $g$ is an endomorphism of some indecomposable projective $B$-module, $P_B(x)$. Since $B$ is directed, we have $\dim \operatorname{End}_B(P_B(x))=1$, so it follows that $g=\mu\cdot 1_{P_B(x)}$, for some scalar $\mu\in K$.

$$\xymatrix{
P_{k+1} \ar@{-->}[d]\ar[r] & P_k \ar[r] \ar[d]^-{\varepsilon_k} & P_{k-1} \\
P_1 \ar[r] & P_0 \ar[r] & 0	
}$$
Note that each module $P_k$ has exactly $n$ non-isomorphic summands, since each term in the minimal projective resolution of a single simple $B$-module is exactly one indecomposable projective $B$-module. Therefore, consider the following picture.

$$\xymatrix{
P_B(y) \ar@{-->}[d]^-{f_y^z} \ar[r]^-{f_y^x} &P_B(x) \ar[r] \ar[d]^-{1_{P_B(x)}} & \dots \\
P_B(z) \ar[r]_-{f_z^x} & P_B(x) \ar[r] & 0
}$$
It is clear that, if we let the dashed arrow represent the map $f_y^z$, we get a commutative square, since $f_y^x=f_z^x f_y^z$. Now, putting the map $f_y^z$ into a matrix in the appropriate position, it is clear that we can make the square

$$\xymatrix{
	P_{k+1} \ar@{-->}[d]\ar[r] & P_k \ar[r] \ar[d]^-{\varepsilon_k} & P_{k-1} \\
	P_1 \ar[r] & P_0 \ar[r] & 0	
}$$
commute. Continuing in this way, we obtain a chain map, $\delta$, such that the matrix of $\delta_k:P_k\to P_0$ has an entry $1_{P_B(x)}$ at position $(r,s)$.

Note that this procedure does not work in general, but crucially depends on the form of the minimal projective resolution of simple $B$-modules and, indeed, on the quiver of $B$. Our procedure guarantees that the map $\varepsilon-\mu\cdot \delta$ is such that the component $(\varepsilon-\mu\cdot \delta)_k$ is given by a matrix whose entry at position $(r,s)$ is 0. Repeating our argument, it follows that we may write $\varepsilon=\tilde{\varepsilon}+\gamma$, where $\gamma\in H^B$ and where $\tilde{\varepsilon}$ has only radical components. Now, we argue exactly as in the proof of Lemma \ref{lemma: L^Lambda consists of only radical maps}, to see that this implies that $L^B$ may be chosen to be of the desired form.

To compute $\operatorname{Ext}^m(L_B(i),L_B(j))$, we apply $\operatorname{Hom}_B(\blank, L_B(j))$ to  $P^\bullet$, obtaining the complex:
$$\xymatrix@=0.3cm{
	0\ar[r] & \operatorname{Hom}_B(P_B(i),L_B(j)) \ar[r] & \operatorname{Hom}_B(P_B(i+1),L_B(j)) \ar[r] & \operatorname{Hom}_B(P_B(i+\ell),L_B(j)) \ar[r] & \dots
}$$
We know that $\dim \operatorname{Hom}_B(P_B(i),L_B(j))=\delta_{ij}$, with the space $\operatorname{Hom}_B(P_B(i),L_B(i))$ containing (up to scalar) only the projection $P_B(i)\to \operatorname{top}P_B(i)$, if $i=j$, and the zero map otherwise. Because the differential maps on $P^\bullet$ are radical maps, we get $\operatorname{Ext}_B^m(L_B(i),L_B(j))=\operatorname{Hom}_B(P^m, L_B(j))$, which implies that
\begin{align*}
	\dim \operatorname{Ext}^{2k}_B(L_B(i),L_B(j))=\left\{\begin{array}{c l}
		1, & \textrm{if}\quad j=i+k\ell;\\
		0, & \textrm{otherwise},
	\end{array}\right.\end{align*}
and
\begin{align*}\dim \operatorname{Ext}^{2k+1}_B(L_B(i),L_B(j))=\left\{\begin{array}{c l}
		1, & \textrm{if}\quad j=i+k\ell+1;\\
		0, & \textrm{otherwise}.
	\end{array}\right.\end{align*}
These extensions have chain map representatives
\begin{align*}\xymatrix@=0.3cm{
	\dots\ar[r]&P_B(i+(k+1)\ell)\ar[r]\ar[d]^-1&P_B(i+k\ell+1)\ar[r]\ar[d]^-1&P_B(i+k\ell)\ar[d]^-1\ar[r]&\dots\ar[r]&P_B(i) \\
	\dots\ar[r]&P_B(i+(k+1)\ell) \ar[r]&P_B(i+k\ell+1)\ar[r] &P_B(i+k\ell),
}\end{align*}
and
\begin{align*}\xymatrix@=0.3cm{
	\dots\ar[r]&P_B(i+(k+1)\ell+1)\ar[r]\ar[d]^-1&P_B(i+(k+1)\ell)\ar[r]\ar[d]^-f&P_B(i+k\ell+1)\ar[r]\ar[d]^-1&P_B(i+k\ell)\ar[r]&\dots\ar[r]&P_B(i) \\
	\dots\ar[r]&P_B(i+(k+1)\ell+1)\ar[r]&P_B(i+k\ell+2) \ar[r]&P_B(i+k\ell+1),
}\end{align*}
respectively. Let $\varepsilon_i$ be the natural basis vector of the space $\operatorname{Ext}_B^1(L_B(i),L_B(i+1))$, and let $\delta_i$ be the natural basis vector of the space $\operatorname{Ext}_B^2(L_B(i),L_B(i+\ell))$. It is then easy to check that we must have $\varepsilon_{i+\ell}\delta_i=\delta_{i+1}\varepsilon_i$, as elements of $\operatorname{Ext}_B^3(L_B(i), L_B(i+\ell+1))$. Since $\operatorname{Ext}^2_B(L_B(i),L_B(i+2))=0$, and $\ell\geq 3$, we must have $\varepsilon_{i+1}\varepsilon_i=0$ for all $i$.

Let $C$ be the path algebra of the quiver $Q$, given by:
\begin{align*}\xymatrix{
	1\ar[r]^-{\alpha_1} \ar@/_1.5pc/[rrr]_-{\beta_1} & 2 \ar@/_1.5pc/[rrr]_-{\beta_2} \ar[r]^-{\alpha_2} & \dots \ar[r]& 1+\ell\ar[r]^-{\alpha_{1+\ell}} \ar@/_1.5pc/[rrrr]_-{\beta_{1+\ell}}& 2+\ell \ar[r]&\dots \ar[r]& 2\ell \ar[r]^-{\alpha_{2\ell}} & 1+2\ell \ar[r]&\dots \ar[r]& n
},\end{align*}
modulo the relations $\alpha_{i+1}\alpha_i=0$, and $\alpha_{i+\ell}\beta_i=\beta_{i+1}\alpha_i$. The above observations imply that there exists a homomorphism of algebras $\Phi:C\to \operatorname{Ext}_B^\ast(\mathbb{L},\mathbb{L})$, defined by $\alpha_i\mapsto \varepsilon_i$ and $\beta_i\mapsto \delta_i$.

Consider now the projective resolution $P^\bullet\to L_B(i)$. If we let $m_{j,k}$ denote the multiplicity of $P_B(j)$ in the $k$th term $P^k$, we have $\dim \operatorname{Ext}_B^k(L_B(i),L_B(j))=\dim \operatorname{Hom}_B(P^k,L_B(j))=m_{j,k}.$ From the form of the projective resolution of $L_B(i)$, we know that

\begin{align*}P^k=\left\{\begin{array}{l l}
	P_B(i+q\ell), & \textrm{if}\quad k=2q; \\
	P_B(i+q\ell +1), & \textrm{if}\quad k=2q+1,
\end{array}\right.\implies m_{j,k}=\left\{\begin{array}{l l}
1, & \textrm{if}\quad k=i+q\ell; \\
1, & \textrm{if}\quad k=i+q\ell+1;\\
0, & \textrm{otherwise}.
\end{array}\right.\end{align*}
Here, $q$ must be such that $i+q\ell \leq n$ and $i+q\ell+1 \leq n$, respectively. Next, we claim that $$\delta_{i+(q-1)\ell}\dots\delta_{i+\ell}\delta_i,\quad \textrm{and},\quad \alpha_{i+q\ell}\delta_{i+(q-1)\ell}\dots\delta_{i+\ell}\delta_i,$$ are non-split extensions. To see this, we look at the chosen chain map representatives of and note that they have components which are the identity homomorphism on some projective. This ensures the chain maps are not null-homotopic. Therefore, the extensions $\varepsilon_i$ and $\delta_i$ generate $\operatorname{Ext}_B^\ast(\mathbb{L},\mathbb{L})$, so that $\Phi$ is surjective. Finally, we do a dimension count. We have $\dim \operatorname{Ext}_B^k(L_B(i),L_B(j))=m_{j,k}$, which we compare to the dimension of the degree $k$ part of the space $e_j C e_i$.

In $C$, our relations imply that
\begin{align*}\beta_{i+r\ell+1}\dots \beta_{i+\ell+1}\beta_{i+1}\alpha_i=\beta_{i+r\ell+1}\dots \beta_{i+\ell+1}\alpha_{i+\ell}\beta_i=\dots=\alpha_{r\ell}\beta_{(r-1)\ell}\dots\beta_{i+\ell}\beta_i\end{align*}

which shows that non-zero paths in $C$ have one of the two following forms.
\begin{enumerate}
	\item The path is of the form $\beta_{i+(r-1)\ell} \dots \beta_{i+\ell}\beta_i$ going from $i$ to $i+r\ell$.
	\item The path is of the form $\alpha_{i+r\ell}\beta_{i+(r-1)\ell}\dots \beta_{i+\ell}\beta_i$ going from $i$ to $i+r\ell+1$.
\end{enumerate}
This shows that 
$$\dim \operatorname{Ext}_B^k(L_B(i),L_B(j))=m_{j,k}=\left\{\begin{array}{l l}
	1, & \textrm{if}\quad k=i+q\ell; \\
	1, & \textrm{if}\quad k=i+q\ell+1;\\
	0, & \textrm{otherwise} \end{array}\right.=\dim e_j C_k e_i,$$
which implies that $\dim \operatorname{Ext}_B^\ast(\mathbb{L},\mathbb{L})=\dim C$, so that $\Phi$ is an isomorphism of algebras.

\subsection{$A_\infty$-structure} In \cite{Madsen}, Madsen computed the $A_\infty$-structure on the $\operatorname{Ext}$-algebra of simple modules over the path algebra of quiver
\begin{align*}
	\xymatrix{
		1 \ar@(dr,ur)_-{\alpha} }
\end{align*} modulo the relation $\alpha^n=0, \ n\geq 3$ as an example. We use his computation to predict the formula for the present case.  We abuse notation and suppress the indices on arrows $\beta$ and $\alpha$ when writing their concatenations. We write a basis element of $\operatorname{Ext}_B^\ast(\mathbb{L},\mathbb{L})$ as $\alpha^x\beta^y$. Here, $x\in \{0,1\}$, and $y$ is such that if the starting vertex of the first $\beta$ is $s$, then $s+y\ell \leq n$. Consider $m_k(\alpha^{x_k}\beta^{y_k},\dots, \alpha^{x_1}\beta^{y_1})$ for $k\geq 3$. This produces an extension from some $L_B(s)$ to $L_B(t)$ where $t=s+\sum\limits_{i=1}^k x_i + y_i\ell.$

We have seen above, that there are non-zero extensions from $L_B(s)$ to $L_B(t)$ if and only if
\begin{align*}t-s \equiv 0 \mod \ell\quad  \textrm{or}\quad t-s\equiv 1 \mod \ell \iff \sum\limits_{i=1}^k x_i +\ell y_i\equiv 0 \mod \ell \quad \textrm{or}\quad \sum\limits_{i=1}^k x_i+\ell y_i\equiv 1 \mod \ell.\end{align*}
Suppose $k\neq \ell$.
The combined degree of the arguments of $m_k$ is $\sum\limits_{i=1}^k x_i + 2y_i$. Since $m_k$ is of degree $2-k$, this should produce an extension of degree $2-k+ \sum\limits_{i=1}^k x_i + 2y_i$.

\begin{enumerate}[label=$(\roman*)$]
	\item If $\sum\limits_{i=1}^k x_i=q\ell$, the non-split extension from $L_B(s)$ to $L_B(t)$  is of degree $2\big(q+\sum\limits_{i=1}^k y_i\big)$. Then,
	\begin{align*}
		2\big(q+\sum\limits_{i=1}^ky_i\big)=2-k+q\ell +2\sum_{i=1}^ky_i \implies (2-\ell)q=2-k \implies q=\frac{2-k}{2-\ell}.
	\end{align*}
If $k<\ell$, we get $q>1$. This is a contradiction, since $0\leq \sum_{i=1}^k x_i \leq k$. If $k>\ell$, we have $q<1$, which implies $q=0$, since $q$ is a non-negative integer. This implies the equality
$$2-k+\sum_{i=1}^k 2y_i=2\sum_{i=1}^ky_i,$$
which implies $2=k$. Since $k\geq 3$, by assumption, this is a contradiction.
\item If $\sum\limits_{i=1}^k x_i=q\ell+1$, an identical argument works.
\end{enumerate}
Left to consider is the case $k=\ell$. There are three possibilities for the condition on the sum $\sum\limits_{i=1}^k x_i+ \ell y_i$.
\begin{enumerate}[label=$(\roman*)$]
	\item We have $x_i=0$ for all $1\leq i \leq \ell$. The sum of degrees of the arguments is $2\sum\limits_{i=1}^\ell y_i$. Since $m_\ell$ is of degree $2-\ell$ this would yield an extension of degree $2-\ell+ 2\sum\limits_{j=1}^\ell y_j$ from $L_B(s)$ to $L_B(t)$ but the only such extension is of degree $2\sum\limits_{j=1}^\ell y_j.$ This is contradiction since $\ell\geq 3$.
	\item We have $x_i=1$ for exactly one $1\leq i\leq \ell$. We mimic the previous case.
	\item We have $x_i=1$ for all $1\leq i \leq \ell$. Then we are dealing with an expression of the form $m_\ell(\alpha \beta^{y_\ell},\dots,\alpha \beta^{y_1})$.
\end{enumerate}
We claim that $m_\ell(\alpha \beta^{y_\ell},\dots,\alpha \beta^{y_1})=\beta^{y_{\ell}+1}\dots \beta^{y_1}$. Consider $\gamma=\lambda_r(\alpha\beta^{y_r},\dots, \alpha\beta^{y_1})$ for $r<\ell$. Put $\sigma=\sum\limits_{i=1}^r y_j$. Suppose the starting vertex of the first $\beta$ is $s$. Then $\gamma$ is a chain map from the projective resolution of $L_B(s)$ to that of $L_B(t)$, where $t=s+r+\ell \sigma.$ We see that $\deg \gamma=2-r + r+2\sigma=2(1+\sigma).$

Earlier we saw that

\begin{align*}P^k=\left\{\begin{array}{l l}
	P_B(i+q\ell) & \textrm{if}\quad k=2q \\
	P_B(i+q\ell +1) & \textrm{if}\quad k=2q+1
\end{array}\right.\end{align*}

which implies that $P^{\deg \gamma}=P_B\left(s+ \ell(1+\sigma)\right).$ We claim that $\lambda_r(\alpha\beta^{y_r},\dots, \alpha\beta^{y_1})$ is the chain map

\begin{align*}\xymatrix@=0.3cm{
	\dots\ar[r]&	P_B(s+l(1+\sigma)+1) \ar[r] \ar[d]^-{f}& P_B(s+\ell(1+\sigma))\ar[d]^-{f} \ar[r] &\dots \ar[r]&P_B(s)	\\
	\dots\ar[r]& P_B(s+r+\ell\sigma+1) \ar[r] & P_B(s+r+\ell\sigma)
}\end{align*}
and that it is the image under the differential of the map
$$\xymatrix@=0.3cm{
	\dots \ar[r]&	P_B(s+\ell(2+\sigma)\ar[d]^-f\ar[r]&	P_B(s+\ell(1+\sigma)+1) \ar[r] \ar[d]^-{0}& P_B(s+\ell(1+\sigma))\ar[d]^-{f} \ar[r] &\dots \ar[r]&P_B(s)	\\
	\dots\ar[r]&	P_B(s+r+(\ell+1)\sigma+1)\ar[r]& P_B(s+r+(\ell+1)\sigma) \ar[r] & P_B(s+r+\ell\sigma+1)
}$$
We proceed by induction. It is clear that the claim holds for $r=2$, by just writing down the composition. Moreover, it is clear that $\gamma$ is the image under the differential of a map of the form given above, which follows from the fact that if we have vertices $a,b,c$ such that $a\leq b\leq c$ and $|c-a|<\ell$, then $f_a^c=f_b^c f_a^b.$

Assume our claim holds for all $2\leq r,s<k$. We put $\delta=\lambda_r(\alpha\beta^{y_r},\dots, \alpha \beta^{y_{s+1}})$ and $\gamma=\lambda_s(\alpha \beta^{y_s},\dots, \alpha\beta^{y_1}).$ Let $P^\bullet,Q^\bullet$ and $R^\bullet$ denote minimal projective resolutions of $L_B(a),L_B(b)$ and $L_B(c)$, respectively. We have
\begin{align*}\deg \gamma=2\big(1+\sum_{i=1}^s y_j\big)\coloneqq d_\gamma \quad\textrm{and}\quad \deg\delta=2\big(1+\sum_{i=s+1}^k y_j\big)\coloneqq d_\delta.\end{align*}
Then $\gamma$ is the chain map
\begin{align*}\gamma=\xymatrix@=0.3cm{
	\dots\ar[r]& P^{d_\gamma+2}\ar[r]\ar[d]^-f&P^{d_\gamma+1}\ar[r]\ar[d]^-f&P^{d_\gamma}\ar[d]^-f \ar[r]& \dots \ar[r] & P^0	\\
	\dots\ar[r]&Q^2\ar[r]& Q^1\ar[r]& Q^0
},\quad\textrm{and}\quad \delta=\xymatrix@=0.3cm{
\dots\ar[r]& Q^{d_\delta+2}\ar[r]\ar[d]^-f&Q^{d_\delta+1}\ar[r]\ar[d]^-f&Q^{d_\delta}\ar[d]^-f \ar[r]& \dots \ar[r] & Q^0	\\
\dots\ar[r]&R^2\ar[r]& R^1\ar[r]& R^0
}\end{align*}
Also by assumption, we have
\begin{align*}h\gamma=\xymatrix@=0.3cm{
	\dots\ar[r]& P^{d_\gamma+2}\ar[r]\ar[d]^-f&P^{d_\gamma+1}\ar[r]\ar[d]^-0&P^{d_\gamma}\ar[d]^-f \ar[r]& \dots \ar[r] & P^0	\\
	\dots\ar[r]&Q^3\ar[r]& Q^2\ar[r]& Q^1
}\quad\textrm{and}\quad h\delta=\xymatrix@=0.3cm{
\dots\ar[r]& Q^{d_\delta+2}\ar[r]\ar[d]^-f&Q^{d_\delta+1}\ar[r]\ar[d]^-0&Q^{d_\delta}\ar[d]^-f \ar[r]& \dots \ar[r] & Q^0	\\
\dots \ar[r]&R^3\ar[r]& R^2\ar[r]& R^1
}\end{align*}
and the composition of these is the map
\begin{align*}\xymatrix@=0.3cm{
	\dots\ar[r]&P^{d_\gamma+d_\delta+1}\ar[r]\ar[d]^-0&P^{d_\gamma+d_\delta} \ar[d]^-f\ar[r]&P^{d_\gamma+d_\delta-1}\ar[d]^-0\ar[r]&\dots\ar[r]&P^{d_\gamma+2}\ar[r]\ar[d]^-f&P^{d_\gamma+1}\ar[r]\ar[d]^-0&P^{d_\gamma}\ar[d]^-f \ar[r]& \dots \ar[r] & P^0	\\
	\dots\ar[r]&P^{d_\delta+2}\ar[r]\ar[d]^-f&Q^{d_\delta+1}\ar[d]^-0 \ar[r]& Q^{d_\delta}\ar[d]^-f \ar[r]&\dots\ar[r]&Q^3\ar[r]& Q^2\ar[r]& Q^1 \\
	\dots\ar[r]&R^3\ar[r]& R^2\ar[r]& R^1 \ar[r]& R^0
}\end{align*}
which is 0. Left to consider are the cases $s=1$ and $r=1$. If $s=1$ we have the composition
\begin{align*}\xymatrix@=0.3cm{
	P^{2y_1+d_\delta+3}\ar[d]^-1 \ar[r] &P^{2y_1+d_\delta+2}\ar[d]^-f \ar[r] &P^{2y_1+d_\delta+1}\ar[d]^-1 \ar[r]&\dots\ar[r]&P^{2y_2+2}\ar[r] \ar[d]^-f&P^{2y_1+1} \ar[d]^-1 \ar[r] &\dots \ar[r]& P^0	\\
	Q^{d_\delta+2}\ar[d]^-f\ar[r]&Q^{d_\delta+1}\ar[d]^-0\ar[r]&Q^{d_\delta}\ar[d]^-f\ar[r]&\dots\ar[r]&Q_1 \ar[r]&Q_0 \\
	R^3\ar[r]&R^2\ar[r]& R^1
}\end{align*}
and if $r=1$ we have
\begin{align*}\xymatrix@=0.3cm{
	\dots\ar[r]&P^{2y_r+d_\gamma+2}\ar[r] \ar[d]^-f&P^{2y_r+d_\gamma+1}\ar[r] \ar[d]^-{0}& P^{2y_r+d_\gamma}\ar[d]^-f\ar[r]&\dots\ar[r]& P^{d_\gamma+1}\ar[r]\ar[d]^-0&P^{d_\gamma}\ar[d]^-f \ar[r]& \dots \ar[r] & P^0	\\
	\dots\ar[r]&Q^{2y_r+3} \ar[r] \ar[d]^-1&Q^{2y_r+2}\ar[r] \ar[d]^-f& Q^{2y_r+1}\ar[d]^-1\ar[r]&\dots\ar[r]& Q^2\ar[r]& Q^1 \ar[r] & Q^0\\
	\dots\ar[r]&R^2\ar[r]&R^1 \ar[r] & R^0
}\end{align*}
and adding the two maps proves the claim. Finally, we consider the case $k=\ell$. Put $\omega=\lambda_\ell(\alpha\beta^{y_\ell},\dots, \alpha\beta^{y_1})$. Then $\omega$ is a chain map from the projective resolution of $L_B(i)$ to that of $L_B(i+t)$, where $t=(1+\sigma)\ell$. It suffices to check that, in this case, the projective resolutions line up in the following way.

\begin{align*}\xymatrix@=0.3cm{
	\dots \ar[r] & P_B(i+t+1)\ar[r] \ar[d]^-{\omega_{i+t+1}}& P_B(i+t) \ar[d]^-{\omega_{i+t}}	\ar[r] & \dots \ar[r] &P_B(i) \\
	\dots \ar[r] & P_B(i+t+1) \ar[r] & P_B(i+t)
}\end{align*}
Our claim now implies that $\omega_j=\operatorname{id}_{P_B(j)}$ for all $j\geq i+t$, so that
\begin{align*}m_\ell (\alpha\beta^{y_\ell},\dots, \alpha\beta^{y_1})=\beta^{\sum\limits_{i=1}^\ell y_i +1}.\end{align*}
We remark that this formula could also be obtained by applying \cite[Theorem~4.9]{Tamaroff2021}.
\section{Application to the dual extension algebra}
Having computed the $A_\infty$-structure on $\operatorname{Ext}_B^\ast(\mathbb{L},\mathbb{L})$, we turn to $\operatorname{Ext}_\Lambda^\ast(\Delta,\Delta)$ for $\Lambda=\mathcal{A}(B,B^{\operatorname{op}})$. We have $\operatorname{Ext}_\Lambda^\ast(\Delta,\Delta)\cong\mathcal{A}(\operatorname{Ext}_B^\ast(\mathbb{L},\mathbb{L}),B)$ so $\operatorname{Ext}_\Lambda^\ast(\Delta,\Delta)$ is isomorphic to the path algebra of the quiver

\begin{align*}\xymatrix{
	1\ar@<-0.5ex>[r]_{a_1} \ar@<.5ex>[r]^-{\alpha_1} \ar@/_2pc/[rrr]_-{\beta_1} & 2 \ar@/_2pc/[rrr]_-{\beta_2} \ar@<-0.5ex>[r]_{a_2} \ar@<.5ex>[r]^-{\alpha_2} & \dots \ar@<-0.5ex>[r] \ar@<.5ex>[r]& 1+\ell\ar@<-0.5ex>[r]_{a_{1+\ell}} \ar@<.5ex>[r]^-{\alpha_{1+\ell}} \ar@/_2pc/[rrrr]_-{\beta_{1+\ell}}& 2+\ell \ar@<-0.5ex>[r] \ar@<.5ex>[r]&\dots \ar@<-0.5ex>[r] \ar@<.5ex>[r]& 2\ell \ar@<-0.5ex>[r]_-{a_{2\ell}} \ar@<.5ex>[r]^-{\alpha_{2\ell}} & 1+2\ell \ar@<-0.5ex>[r] \ar@<.5ex>[r]&\dots \ar@<-0.5ex>[r] \ar@<.5ex>[r]& n
}\end{align*}
modulo the relations
\begin{align*}\alpha_{i+1}\alpha_i=0,\quad \alpha_{i+\ell}\beta_i=\beta_{i+1}\alpha_i\quad \textrm{and}\quad a_{i+\ell-1}\dots a_{i+1}a_i=0\end{align*}
as well as the dual extension relations, $\alpha_{i+1}a_i=0$ and $\beta_{i+1}a_i=0$.

In light of Theorem \ref{theorem:A-inf structure on ext algebra of standards}, we consider an expression of the form $m_n(g^\prime_n \varepsilon_n,\dots, \varepsilon_1)$
where $g_n^\prime \in \operatorname{Hom}_\Lambda(\Delta,\Delta)$ and $\varepsilon_1,\dots,\varepsilon_n\in \operatorname{Ext}_B^\ast(\mathbb{L},\mathbb{L})$. By our theorem, there holds
\begin{align*}m_n(g^\prime_n \varepsilon_n,\dots, \varepsilon_1)=(-1)^{n+1} g_n ^\prime F \left(p^B \varepsilon_n h^B (\lambda^B_{n-1}(\varepsilon_{n-1},\dots, \varepsilon_1))\right).\end{align*}
We have non-split extensions $\beta^k\in \operatorname{Ext}_\Lambda^{2k}(\Delta_\Lambda(i),\Delta_\Lambda(i+k\ell))$ and $\alpha\beta^k\in \operatorname{Ext}_\Lambda^{2k+1}(\Delta_\Lambda(i),\Delta_\Lambda(i+k\ell+1))$
which are induced from the corresponding extensions between simple $B$-modules. These can be composed with $g^\prime \in \operatorname{Hom}_\Lambda(\Delta,\Delta)$ to obtain new extensions
\begin{align*}g^\prime\beta^k\in \operatorname{Ext}_\Lambda^{2k}(\Delta_\Lambda(i),\Delta_\Lambda(j))\quad \textrm{and}\quad g^\prime\alpha\beta^k\in \operatorname{Ext}_\Lambda^{2k+1}(\Delta_\Lambda(i),\Delta_\Lambda(j+1)).\end{align*}
If $\lambda^B_{n-1}(\varepsilon_{n-1},\dots,\varepsilon_1)$ is zero, or if $h^B \lambda_{n-1}^B(\varepsilon_{n-1},\dots,\varepsilon_1)$ is zero, there is nothing to compute. We may assume $\varepsilon_i=\alpha^{x_i}\beta^{y_i}$, with $x_i$ and $y_i$ as before. Then, the degree of $\lambda_{n-1}^B(\varepsilon_{n-1},\dots,\varepsilon_1)$ is 
\begin{align*}2-(n-1)+\sum\limits_{i=1}^{n-1} x_i+2y_i.\end{align*}
We apply $h^B$ and compose with $\varepsilon_n=\alpha^{x_n}\beta^{y_n}$ to get a chain map of degree $2-n+\sum\limits_{j=1}^n x_j+2y_j.$

We may recycle the arguments from the computation for $\operatorname{Ext}_B^\ast(\mathbb{L},\mathbb{L})$ to get that $m_n(g^\prime_n\varepsilon_n,\dots,\varepsilon_1)=0$ unless $n=\ell$. This works because, in the grading on $\operatorname{Ext}_\Lambda^\ast(\Delta,\Delta)$, elements of $\operatorname{Hom}_\Lambda(\Delta,\Delta)$ are homogeneous of degree 0, leaving the degree-based arguments unchanged. Again, there are three possible cases satisfying the requirement on the sum $\sum\limits_{j=1}^\ell x_j.$ If $x_j=0$ for all $ 1\leq j\leq \ell$, or $x_j=1$ for exactly one $1\leq j\leq \ell$, we may recycle the arguments from the case of $\operatorname{Ext}_B^\ast(\mathbb{L},\mathbb{L})$, arriving at a contradiction. Therefore, we consider the map
$\alpha \beta^{y_\ell} h^B \lambda_{\ell-1}^B(\alpha\beta^{y_{\ell-1}},\dots,\alpha\beta^{y_1}).$ We put $\gamma=\lambda_{\ell-1}^B(\alpha\beta^{y_{\ell-1}},\dots,\alpha\beta^{y_1})$ and $d_\gamma=\deg\gamma$. Then 
\begin{align*}\gamma=\vcenter{\xymatrix@=0.3cm{
	\dots\ar[r]&P^{d_\gamma+1}\ar[r]\ar[d]^-f& P^{d_\gamma} \ar[d]^-f\ar[r]& \dots \ar[r] &P^0	\\
	\dots\ar[r]&Q^1\ar[r]& Q^0
}},\quad \textrm{and}\quad h^B\gamma=\vcenter{\xymatrix@=0.3cm{
	\dots\ar[r]&P^{d_\gamma+3}\ar[d]^-0 \ar[r]& P^{d_\gamma+2}\ar[r]\ar[d]^-f&P^{d_\gamma+1}\ar[r]\ar[d]^-0&P^{d_\gamma}\ar[d]^-f \ar[r]& \dots \ar[r] & P^0	\\
	\dots\ar[r]&Q^4\ar[r]&Q^3\ar[r]& Q^2\ar[r]& Q^1
}},\end{align*}
which, composed with $\alpha\beta^{y_\ell}$, yields the map
$$\xymatrix@=0.3cm{
	P^{d_\gamma+2y_\ell+1}\ar[d]^-0\ar[r]&P^{d_\gamma+2y_\ell} \ar[d]^-f \ar[r]&\dots\ar[r]&P^{d_\gamma+1}\ar[r]\ar[d]^-0&P^{d_\gamma}\ar[d]^-f \ar[r]& \dots \ar[r] & P^0	\\
	Q^{2y_\ell+2}\ar[r] \ar[d]^-f&Q^{2y_\ell+1}\ar[r] \ar[d]^-1&\dots\ar[r]&Q^2\ar[r]& Q^1\\
	R^1\ar[r]&R^0
}$$
Put $\sigma_k=\sum\limits_{j=1}^k y_j$. Computing degrees of the involved maps and using our formula for the projective resolution of simple modules over $B$, we see that the above picture may be more precisely given as follows.
\begin{align*}\xymatrix@=0.3cm{
	\dots\ar[r]&	P_B(i+\ell(1+\sigma_\ell)+1)\ar[d]^-0\ar[r]&P_B(i+\ell(1+\sigma_\ell)) \ar[d]^-1 \ar[r]&\dots\ar[r]&P_B(i+\ell(1+\sigma_{\ell-1})+1)\ar[r]\ar[d]^-0&P_B(i+\ell(1+\sigma_{\ell-1}))\ar[d]^-1
	\\
	\dots\ar[r]&	P_B(i+\ell(2+\sigma_\ell)-1)\ar[r] \ar[d]^-f&P_B(i+\ell(1+\sigma_\ell))\ar[r] \ar[d]^-1&\dots\ar[r]&P_B(i+\ell(2+\sigma_{\ell-1})-1)\ar[r]& P_B(i+\ell(1+\sigma_{\ell-1}))\\
	\dots\ar[r]&	P_B(i+\ell(1+\sigma_\ell)+1)\ar[r]&P_B(i+\ell(1+\sigma_\ell))
}\end{align*}
We denote this composition by $\Gamma$ and put $d\coloneqq\deg \Gamma=2(1+\sigma_\ell)$. Consider the space of homogeneous maps of degree $\deg \Gamma$ from $P^\bullet$ to $R^\bullet$. Denote this space by $V^{d}$. It is clear from the above picture that $V^{d}$ has a basis given by maps $b_i$ of the following form.

\begin{align*}b_0=\xymatrix@=0.3cm{
	\dots\ar[r]&P_B(i+\ell(1+\sigma_\ell)+1)\ar[r] \ar[d]^- 0& P_B(i+\ell(1+\sigma_\ell)) \ar[d]^-1	\\
	\dots\ar[r]&P_B(i+\ell(1+\sigma_\ell)+1) \ar[r] & P_B(i+\ell(1+\sigma_\ell))
} \end{align*}
\begin{align*}b_1=\xymatrix@=0.3cm{
	\dots\ar[r]&P_B(i+\ell(1+\sigma_\ell)+1)\ar[r] \ar[d]^- 1& P_B(i+\ell(1+\sigma_\ell)) \ar[d]^-0	\\
	\dots\ar[r]&P_B(i+\ell(1+\sigma_\ell)+1) \ar[r] & P_B(i+\ell(1+\sigma_\ell))
}\end{align*}
Suppose this basis is $\{b_0,\dots, b_n\}$. We know that $\dim \operatorname{Ext}_B^d(L_B(i),L_B(i+\ell(1+\sigma_\ell)))=1.$ This space contains the extension $\varepsilon$ represented by the chain map
\begin{align*}\xymatrix@=0.3cm{
	\dots\ar[r]&	P_B(i+\ell(1+\sigma_\ell)+1)\ar[d]^-1\ar[r]&P_B(i+\ell(1+\sigma_\ell)) \ar[d]^-1	\\
	\dots\ar[r]&	P_B(i+\ell(1+\sigma_\ell)+1)\ar[r]&P_B(i+\ell(1+\sigma_\ell))
}\end{align*}
and, clearly, $\varepsilon=\sum\limits_{i=0}^n b_i$. Note that $\varepsilon$ is a basis for the homology of $V^d$. Since $\varepsilon$ is linearly independent from $b_1,\dots, b_n$, the set $\{\varepsilon,b_1,\dots, b_n\}$ is a basis of $V^d$. In this basis, we have $\Gamma=\varepsilon-b_1-b_3-\dots$ so that the projection of $\Gamma$ onto the homology is $\varepsilon$. This, in turn, implies that $p^B(\Gamma)$ is the extension $\beta^{\sigma_\ell+1}$. This yields the formula
\begin{align*}m_\ell(g^\prime \alpha\beta^{y_\ell},\dots, \alpha\beta^{y_1})=g^\prime \beta^{\sigma_\ell+1}=g^\prime m_\ell(\alpha\beta^{y_\ell},\dots,\alpha\beta^{y_1}).
\end{align*}
\subsection{Computing the box}
The results from \cite{kko} guarantee that there is an algebra $R$, Morita equivalent to $\Lambda$, which admits a \emph{regular exact Borel subalgebra} $\widehat{B}\subset R$. In our setup, $B\subset \Lambda$ is only an exact Borel subalgebra. We compute $\widehat{B}$ and $R$. We know that $\operatorname{Ext}_\Lambda^\ast(\Delta,\Delta)$ is given by
\begin{align*}\xymatrix{
		1\ar@<-0.5ex>[r]_{a_1} \ar@<.5ex>[r]^-{\alpha_1} \ar@/_2pc/[rrr]_-{\beta_1} & 2 \ar@/_2pc/[rrr]_-{\beta_2} \ar@<-0.5ex>[r]_{a_2} \ar@<.5ex>[r]^-{\alpha_2} & \dots \ar@<-0.5ex>[r] \ar@<.5ex>[r]& 1+\ell\ar@<-0.5ex>[r]_{a_{1+\ell}} \ar@<.5ex>[r]^-{\alpha_{1+\ell}} \ar@/_2pc/[rrrr]_-{\beta_{1+\ell}}& 2+\ell \ar@<-0.5ex>[r] \ar@<.5ex>[r]&\dots \ar@<-0.5ex>[r] \ar@<.5ex>[r]& 2\ell \ar@<-0.5ex>[r]_-{a_{2\ell}} \ar@<.5ex>[r]^-{\alpha_{2\ell}} & 1+2\ell \ar@<-0.5ex>[r] \ar@<.5ex>[r]&\dots \ar@<-0.5ex>[r] \ar@<.5ex>[r]& n
}\end{align*}
modulo the relations
\begin{align*}\alpha_{i+1}\alpha_i=0,\quad \alpha_{i+\ell}\beta_i=\beta_{i+1}\alpha_i\quad \alpha_{i+1}a_i=\beta_{i+1}a_i=0\quad  \textrm{and}\quad a_{i+\ell-1}\dots a_{i+1}a_i=0.\end{align*}
Next, we briefly describe the method we use to find $\widehat{B}$ and $R$, referring to \cite[Section~4.6]{bocsseat} and to \cite{kko}. We put $\mathbb{L}=L_\Lambda(1)\oplus\dots \oplus L_\Lambda(n)$. For any $n\geq 2$, we have a map
\begin{align*}
m_n: \left(\operatorname{Ext}_\Lambda^1(\Delta,\Delta)\right)^{\otimes_\mathbb{L} n}\to \operatorname{Ext}_\Lambda^2(\Delta,\Delta)
\end{align*}
because $m_n$ is of degree $2-n$ and the total degree of inputs is exactly $n$. Up to natural isomorphism, this gives a dual map
\begin{align*}
\mathbb{D}m_n: \mathbb{D}\operatorname{Ext}_\Lambda^2(\Delta,\Delta)\to \left(\mathbb{D}\operatorname{Ext}_\Lambda^1(\Delta,\Delta) \right)^{\otimes_\mathbb{L}n}.
\end{align*}
Summing these maps over all $n\geq 2$, we obtain the map
\begin{align*}
\sum \mathbb{D}m_n : \mathbb{D}\operatorname{Ext}_\Lambda^2(\Delta,\Delta)\to \medoplus_{n\geq 2} \left(\mathbb{D}\operatorname{Ext}_\Lambda^1(\Delta,\Delta)\right)^{\otimes_\mathbb{L} n}
\end{align*}
Then, we obtain $\widehat{B}$ as
\begin{align*}
\faktor{\medoplus_{n\geq 2} \left(\mathbb{D}\operatorname{Ext}_\Lambda^1(\Delta,\Delta)\right)^{\otimes_\mathbb{L} n}}{\left(\operatorname{im} \sum\mathbb{D}m_n\right)}=\widehat{B}.
\end{align*}
The extensions of degree 1 starting in $\Delta_\Lambda(i)$ are 
\begin{align*}
	\alpha_i &\in \operatorname{Ext}_\Lambda^1 (\Delta_\Lambda(i),\Delta_\Lambda(i+1)) \\
	a_{i+1}\alpha_i &\in \operatorname{Ext}_\Lambda^1(\Delta_\Lambda(i),\Delta_\Lambda(i+2)) \\
	&\vdots \\
	a_{i+\ell-1} \dots a_{i+1}\alpha_i &\in \operatorname{Ext}_\Lambda^1(\Delta_\Lambda(i),\Delta_\Lambda(i+\ell)).
\end{align*}
Then, $\widehat{B}$ has the following quiver.
\begin{enumerate}[label=$(\arabic*)$]
	\item Vertices are $1, \dots, n$.
	\item For each $i$, there are arrows
	\begin{align*}
	i\to i+1,\quad i\to i+2,\quad \dots\quad i\to i+\ell
	\end{align*}
	and no other arrows. 
\end{enumerate}
Since the only extensions of degree 2 are of the form $\varphi \beta$ where $\varphi\in \operatorname{Hom}_\Lambda(\Delta,\Delta)$, and these are uniquely obtained from higher multiplications by $m_\ell(\varphi \alpha,\dots,\alpha)=\varphi\beta$ we impose the relations $\varphi\alpha^\ell=0$. We draw the case $n=5,\ell=3$. The quiver, then, is
\vspace{.1cm}
\begin{align*}
\xymatrix{
1\ar[r]^-{\alpha_1} \ar@/^1.5pc/[rr]^-{\gamma_1} \ar@/_1.5pc/[rrr]_-{\delta_1}& 2\ar[r]^-{\alpha_2} \ar@/^1.5pc/[rr]^-{\gamma_2} \ar@/_1.5pc/[rrr]_-{\delta_2}& 3 \ar@/^1.5pc/[rr]^-{\gamma_3}\ar[r]^-{\alpha_3} & 4 \ar[r]^-{\alpha_4} & 5
}
\end{align*}
\\ \vspace{.1cm}
with the relations
\begin{align*}
\alpha_3\alpha_2\alpha_1=0,\quad \alpha_4\alpha_3\alpha_2=0,\quad \gamma_3\alpha_2\alpha_1=0.
\end{align*}
The Loewy diagrams of the indecomposable projective modules are then as follows.
	 \begin{align*}
	 P(5)\cong L(5),\quad P(4): \xymatrix@=0.3cm{4\ar[d]\\5},\quad P(3):\xymatrix@=0.3cm{ & & 3\ar[ld] \ar[d]\\
		& 4 \ar[ld] & 5 \\
		5},\quad P(2):\xymatrix@=0.3cm{
	4 &3\ar[d]\ar[l]& \ar[l]2 \ar[d]\ar[rd]\\
	& 5  & 4 \ar[d] & 5 \\
	 & & 5	
	},\quad P(1): \xymatrix@=0.3cm{3 & 2 \ar[ld] \ar[d]\ar[l]& 1 \ar[d] \ar[l] \ar[r] & 4 \ar[d]\ar[r] & 5 \\4 \ar[d]& 5 & 3\ar[d] \ar[ld]& 5\\
	5 & 4\ar[d]& 5\\
	  & 5
}\end{align*}
which implies that
\begin{align*}
R\otimes_{\widehat{B}} P_{\widehat{B}}(5)&=P_\Lambda(5)\\
R\otimes_{\widehat{B}} P_{\widehat{B}}(4)&=P_\Lambda(4)\\
R\otimes_{\widehat{B}} P_{\widehat{B}}(3)&=P_\Lambda(3)\oplus P_\Lambda(5)\\
R\otimes_{\widehat{B}} P_{\widehat{B}}(2)&=P_\Lambda(2)\oplus P_\Lambda(4) \oplus P_\Lambda(5)^{\oplus 2}\\
R\otimes_{\widehat{B}} P_{\widehat{B}}(1)&=P_\Lambda(1)\oplus P_\Lambda(3)\oplus P_\Lambda(4)^{\oplus 2}\oplus P_\Lambda(5)^{\oplus 3}
\end{align*}
so that 
\begin{align*}
R=\operatorname{End}_\Lambda\left( P_\Lambda(1) \oplus P_\Lambda(2) \oplus P_\Lambda(3)^{\oplus 2}\oplus P_\Lambda(4)^{\oplus 4} \oplus P_\Lambda(5)^{\oplus 7}\right)^{\operatorname{op}}.
\end{align*}
If an $A_\infty$-structure is not known, it is also possible to use the results by Conde in \cite{conde2020exact,conde2020quasihereditary} to find $R$.
\section*{Acknowledgement}
The author thanks the anonymous referee for the insightful and helpful comments which aided in improving the exposition of this work.
\newpage
\printbibliography
\end{document}